\newcommand{\nc}{\newcommand}
\def\KeyWord#1{$\backslash$\IfColor{$\!\!$\textRed{#1}\textBlack}{#1}$\!\!$}
\DeclareFontFamily{U}{shuffle}{}
\DeclareFontShape{U}{shuffle}{m}{n}{ <-8>shuffle7 <8->shuffle10}{}
\theoremstyle{plain}
\newtheorem{thm}{Theorem}[section]
\newtheorem{lem}[thm]{Lemma}
\newtheorem{prop}[thm]{Proposition}
\newtheorem{lem-defn}[thm]{Lemma-Definition}
\newtheorem{cor}[thm]{Corollary}
\newtheorem{conj}[thm]{Conjecture}
\theoremstyle{definition}
\newtheorem{defn}[thm]{Definition}
\newtheorem{rem}[thm]{Remark}
\newtheorem{eg}[thm]{Example}
\DeclareMathOperator{\dep}{{dp}}
\DeclareMathOperator{\sgn}{{sgn}}
\DeclareMathOperator{\ord}{{ord}}
\nc{\setA}{{\mathsf A}}
\nc{\setB}{{\mathsf B}}
\nc{\setC}{{\mathsf C}}
\nc{\setD}{{\mathsf D}}
\nc{\setE}{{\mathsf E}}
\nc{\setF}{{\mathsf F}}
\nc{\setG}{{\mathsf G}}
\nc{\setH}{{\mathsf{H}}}
\nc{\setI}{{\mathsf I}}
\nc{\setJ}{{\mathsf J}}
\nc{\setK}{{\mathsf K}}
\nc{\setL}{{\mathsf L}}
\nc{\setM}{{\mathsf M}}
\nc{\setN}{{\mathsf N}}
\nc{\setO}{{\mathsf O}}
\nc{\setP}{{\mathsf P}}
\nc{\setQ}{{\mathsf Q}}
\nc{\setR}{{\mathsf R}}
\nc{\setS}{{\mathsf S}}
\nc{\setT}{{\mathsf T}}
\nc{\setU}{{\mathsf U}}
\nc{\setV}{{\mathsf V}}
\nc{\setW}{{\mathsf W}}
\nc{\setX}{{\mathsf X}}
\nc{\setY}{{\mathsf Y}}
\nc{\setZ}{{\mathsf Z}}
\renewcommand{\a}{{\texttt{a}}}
\renewcommand{\b}{{\texttt{b}}}
\nc{\z}{{\texttt{z}}}
\nc{\exref}[1]{\chapref\ref{#1}}
\nc{\per}[1]{\underset{#1}{\boldsymbol \pi}\,}
\nc{\QSym}{{\mathsf{QSym}}}
\nc{\tla}{\overset{\leftarrow}}
\nc{\MT}{{\rm MT}}
\nc{\XX}{{X}}
\nc{\gF}{{\varPhi}}
\nc{\gL}{{\Lambda}}
\nc{\ot}{\otimes}
\nc{\dual}{\ast}
\nc{\wht}{\widehat}
\nc{\bwg}{{\bigwedge}}
\nc{\wg}{{\wedge}}
\nc{\mmu}{{\boldsymbol{\mu}}}
\nc{\mal}{{{\scstl \maltese}}}
\nc{\fA}{{\mathfrak A}}
\nc{\hfA}{{\widehat{\mathfrak A}}}
\nc{\HH}{{\mathbb H}}
\nc{\ra}{\rightarrow}
\nc{\os}{{\overset}}
\nc{\GG}{{\mathbb G}}
\nc{\F}{{\mathbb F}}
\nc{\Z}{{\mathbb Z}}
\nc{\R}{{\mathbb R}}
\nc{\N}{{\mathbb N}}
\nc{\ZN}{{{\mathbb N}_0}}
\nc{\Q}{{\mathbb Q}}
\nc{\CC}{{\mathbb C}}
\nc{\V}{{\mathbb V}}
\nc{\CP}{{\mathbb{CP}}}
\nc{\Cnn}{{\mathbb C}_{\ge 0}}
\nc{\Cp}{{\mathbb C}_{>0}}
\nc{\MHSS}{MH${}^\star$S\xspace}
\nc{\MHSSs}{MH${}^\star$Ss\xspace}
\nc{\MZSV}{MZ${}^\star$V\xspace}
\nc{\MZSVs}{MZ${}^\star$Vs\xspace}
\nc{\FMZSV}{FMZ${}^\star$V\xspace}
\nc{\FMZSVs}{FMZ${}^\star$Vs\xspace}
\nc{\FESS}{FE${}^\star$S\xspace}
\nc{\FESSs}{FE${}^\star$Ss\xspace}
\nc{\DSh}{{\mathsf{DSh}}}
\nc{\ShC}{{\mathsf{ShC}}}
\nc{\MZV}{{\mathsf{MZ}}}
\nc{\qMZ}{{q\mathsf{MZ}}}
\nc{\FMZ}{{\mathsf{FMZ}}}
\nc{\CMZV}{{\mathsf{CMZV}}}
\nc{\ES}{{\mathsf{ES}}}
\nc{\FES}{{\mathsf{FES}}}
\nc{\qMZV}{\mathsf{qMZV}}
\nc{\grqMZV}{\mathsf{grqMZV}}
\nc{\gemn}{{\mathfrak n}}
\nc{\wtcalM}{{\widetilde\calM}}
\nc{\gam}{{\gamma}}
\nc{\gG}{{\Gamma}}
\nc{\om}{{\omega}}
\nc{\vep}{{\varepsilon}}
\nc{\ga}{{\alpha}}
\nc{\gl}{{\lambda}}
\nc{\gb}{{\beta}}
\nc{\gd}{{\delta}}
\nc{\gf}{{\varphi}}
\nc{\orgd}{{\vec \gd\,}}
\nc{\gs}{{\sigma}}
\nc{\gth}{{\theta}}
\nc{\gS}{{\Sigma}}
\nc{\gk}{{\kappa}}
\nc{\tgz}{{\tilde{\zeta}}}
\nc{\gO}{{\Omega}}
\nc{\sif}{{\mathcal S}}
\nc{\gt}{{\tau}}
\nc{\Lra}{\Longrightarrow}
\nc{\lra}{\longrightarrow}
\nc{\lmaps}{\longmapsto}
\nc{\fS}{{\mathsf S}}
\nc{\DD}{{\mathfrak D}}
\nc{\Llra}{\Longleftrightarrow}
\nc{\ol}{\overline}
\def\ola#1{\overset{\text{\raisebox{-4pt}{$\scriptscriptstyle \leftarrow$}}}{#1}}
\nc{\zq}{{\zeta_q}}
\nc{\us}{\underset}
\nc{\tn}{{\tilde{n}}}
\nc{\gD}{{\Delta}}
\nc{\bi}{{\bf i}}
\nc{\bfone}{{\bf 1}}
\nc{\bfzero}{{\bf 0}}
\nc{\bftwo}{{\bf 2}}
\nc{\bfa}{{\bf a}}
\nc{\bfb}{{\bf b}}
\nc{\bfc}{{\bf c}}
\nc{\bfd}{{\bf d}}
\nc{\bfe}{{\bf e}}
\nc{\bferev}{\ola{\bf e}}
\nc{\bff}{{\bf f}}
\nc{\bfg}{{\bf g}}
\nc{\bfh}{{\bf h}}
\nc{\bfi}{{\bf i}}
\nc{\bfj}{{\bf j}}
\nc{\bfjrev}{\ola{\bf j}}
\nc{\obfi}{{\overrightarrow{\boldsymbol \imath}}}
\nc{\obfj}{{\overrightarrow{\boldsymbol \jmath}}}
\nc{\obfk}{{\overrightarrow{\bf k}}}
\nc{\veps}{{\varepsilon}}
\nc{\bfn}{{\bf n}}
\nc{\bfl}{{\bf l}}
\nc{\bfk}{{\bf k}}
\nc{\bfm}{{\bf m}}
\nc{\bfo}{{\bf o}}
\nc{\bfp}{{\bf p}}
\nc{\bfq}{{\bf q}}
\nc{\bfr}{{\bf r}}
\nc{\bfs}{{\bf s}}
\nc{\bft}{{\bf t}}
\nc{\bfu}{{\bf u}}
\nc{\bfv}{{\bf v}}
\nc{\bfw}{{\bf w}}
\nc{\bfx}{{\bf x}}
\nc{\bfy}{{\bf y}}
\nc{\bfz}{{\bf z}}
\nc{\bfA}{{\bf A}}
\nc{\bfB}{{\bf B}}
\nc{\bfC}{{\bf C}}
\nc{\bfD}{{\bf D}}
\nc{\bfE}{{\bf E}}
\nc{\bfF}{{\bf F}}
\nc{\bfG}{{\bf G}}
\nc{\bfH}{{\bf H}}
\nc{\bfI}{{\bf I}}
\nc{\bfJ}{{\bf J}}
\nc{\bfK}{{\bf K}}
\nc{\bfL}{{\bf L}}
\nc{\bfM}{{\bf M}}
\nc{\bfN}{{\bf N}}
\nc{\bfO}{{\bf O}}
\nc{\bfP}{{\bf P}}
\nc{\bfQ}{{\bf Q}}
\nc{\bfR}{{\bf R}}
\nc{\bfS}{{\bf S}}
\nc{\bfT}{{\bf T}}
\nc{\bfU}{{\bf U}}
\nc{\bfV}{{\bf V}}
\nc{\bfW}{{\bf W}}
\nc{\bfX}{{\bf X}}
\nc{\bfY}{{\bf Y}}
\nc{\bfZ}{{\bf Z}}
\nc{\bbA}{{\mathbb A}}
\nc{\bbB}{{\mathbb B}}
\nc{\bbC}{{\mathbb C}}
\nc{\bbD}{{\mathbb D}}
\nc{\bbE}{{\mathbb E}}
\nc{\bbF}{{\mathbb F}}
\nc{\bbG}{{\mathbb G}}
\nc{\bbH}{{\mathbb H}}
\nc{\bbI}{{\mathbb I}}
\nc{\bbJ}{{\mathbb J}}
\nc{\bbK}{{\mathbb K}}
\nc{\bbL}{{\mathbb L}}
\nc{\bbM}{{\mathbb M}}
\nc{\bbN}{{\mathbb N}}
\nc{\bbO}{{\mathbb O}}
\nc{\bbP}{{\mathbb P}}
\nc{\bbQ}{{\mathbb Q}}
\nc{\bbR}{{\mathbb R}}
\nc{\bbS}{{\mathbb S}}
\nc{\bbT}{{\mathbb T}}
\nc{\bbU}{{\mathbb U}}
\nc{\bbV}{{\mathbb V}}
\nc{\bbW}{{\mathbb W}}
\nc{\bbX}{{\mathbb X}}
\nc{\bbY}{{\mathbb Y}}
\nc{\bbZ}{{\mathbb Z}}
\nc{\bba}{{\mathbb a}}
\nc{\bbb}{{\mathbb b}}
\nc{\bbc}{{\mathbb c}}
\nc{\bbd}{{\mathbb d}}
\nc{\bbe}{{\mathbb e}}
\nc{\bbf}{{\mathbb f}}
\nc{\bbg}{{\mathbb g}}
\nc{\bbh}{{\mathbb h}}
\nc{\bbi}{{\mathbb i}}
\nc{\bbk}{{\mathbb K}}
\nc{\bbl}{{\mathbb l}}
\nc{\bbm}{{\mathbb m}}
\nc{\bbn}{{\mathbb n}}
\nc{\bbo}{{\mathbb o}}
\nc{\bbp}{{\mathbb p}}
\nc{\bbq}{{\mathbb q}}
\nc{\bbr}{{\mathbb r}}
\nc{\bbs}{{\mathbb s}}
\nc{\bbt}{{\mathbb t}}
\nc{\bbu}{{\mathbb u}}
\nc{\bbv}{{\mathbb v}}
\nc{\bbw}{{\mathbb w}}
\nc{\bbx}{{\mathbb x}}
\nc{\bby}{{\mathbb y}}
\nc{\bbz}{{\mathbb z}}
\nc{\calA}{{\mathcal A}}
\nc{\calB}{{\mathcal B}}
\nc{\calC}{{\mathcal C}}
\nc{\calD}{{\mathcal D}}
\nc{\calE}{{\mathcal E}}
\nc{\calF}{{\mathcal F}}
\nc{\calG}{{\mathcal G}}
\nc{\calH}{{\mathcal H}}
\nc{\calI}{{\mathcal I}}
\nc{\calJ}{{\mathcal J}}
\nc{\calK}{{\mathcal K}}
\nc{\calL}{{\mathcal L}}
\nc{\calM}{{\mathcal M}}
\nc{\calN}{{\mathcal N}}
\nc{\calO}{{\mathcal O}}
\nc{\calP}{{\mathcal P}}
\nc{\calQ}{{\mathcal Q}}
\nc{\calR}{{\mathcal R}}
\nc{\calS}{{\mathcal S}}
\nc{\calT}{{\mathcal T}}
\nc{\calU}{{\mathcal U}}
\nc{\calV}{{\mathcal V}}
\nc{\calW}{{\mathcal W}}
\nc{\calX}{{\mathcal X}}
\nc{\calY}{{\mathcal Y}}
\nc{\calZ}{{\mathcal Z}}
 \nc{\cala}{{\mathcal a}}
\nc{\calb}{{\mathcal b}}
\nc{\calc}{{\mathcal c}}
\nc{\cald}{{\mathcal d}}
\nc{\cale}{{\mathcal e}}
\nc{\calf}{{\mathcal f}}
\nc{\calg}{{\mathcal g}}
\nc{\calh}{{\mathcal h}}
\nc{\cali}{{\mathcal i}}
\nc{\calj}{{\mathcal j}}
\nc{\calk}{{\mathcal k}}
\nc{\call}{{\mathcal l}}
\nc{\calm}{{\mathcal m}}
\nc{\caln}{{\mathcal n}}
\nc{\calo}{{\mathcal o}}
\nc{\calp}{{\mathcal p}}
\nc{\calq}{{\mathcal q}}
\nc{\calr}{{\mathcal r}}
\nc{\cals}{{\mathcal s}}
\nc{\calt}{{\mathcal t}}
\nc{\calu}{{\mathcal u}}
\nc{\calv}{{\mathcal v}}
\nc{\calw}{{\mathcal w}}
\nc{\calx}{{\mathcal x}}
\nc{\caly}{{\mathcal y}}
\nc{\calz}{{\mathcal z}}
\nc{\frakA}{{\mathfrak A}}
\nc{\frakB}{{\mathfrak B}}
\nc{\frakC}{{\mathfrak C}}
\nc{\frakD}{{\mathfrak D}}
\nc{\frakE}{{\mathfrak E}}
\nc{\frakF}{{\mathfrak F}}
\nc{\frakG}{{\mathfrak G}}
\nc{\frakH}{{\mathfrak H}}
\nc{\frakI}{{\mathfrak I}}
\nc{\frakJ}{{\mathfrak J}}
\nc{\frakK}{{\mathfrak K}}
\nc{\frakL}{{\mathfrak L}}
\nc{\frakM}{{\mathfrak M}}
\nc{\frakN}{{\mathfrak N}}
\nc{\frakO}{{\mathfrak O}}
\nc{\frakP}{{\mathfrak P}}
\nc{\frakQ}{{\mathfrak Q}}
\nc{\frakR}{{\mathfrak R}}
\nc{\frakS}{{\mathfrak S}}
\nc{\frakT}{{\mathfrak T}}
\nc{\frakU}{{\mathfrak U}}
\nc{\frakV}{{\mathfrak V}}
\nc{\frakW}{{\mathfrak W}}
\nc{\frakX}{{\mathfrak X}}
\nc{\frakY}{{\mathfrak Y}}
\nc{\frakZ}{{\mathfrak Z}}
\nc{\fraka}{{\mathfrak a}}
\nc{\frakb}{{\mathfrak b}}
\nc{\frakc}{{\mathfrak c}}
\nc{\frakd}{{\mathfrak d}}
\nc{\frake}{{\mathfrak e}}
\nc{\frakf}{{\mathfrak f}}
\nc{\frakg}{{\mathfrak g}}
\nc{\frakh}{{\mathfrak h}}
\nc{\fraki}{{\mathfrak i}}
\nc{\frakj}{{\mathfrak j}}
\nc{\frakk}{{\mathfrak k}}
\nc{\frakl}{{\mathfrak l}}
\nc{\frakm}{{\mathfrak m}}
\nc{\frakn}{{\mathfrak n}}
\nc{\frako}{{\mathfrak o}}
\nc{\frakp}{{\mathfrak p}}
\nc{\frakq}{{\mathfrak q}}
\nc{\frakr}{{\mathfrak r}}
\nc{\fraks}{{\mathfrak s}}
\nc{\frakt}{{\mathfrak t}}
\nc{\fraku}{{\mathfrak u}}
\nc{\frakv}{{\mathfrak v}}
\nc{\frakw}{{\mathfrak w}}
\nc{\frakx}{{\mathfrak x}}
\nc{\fraky}{{\mathfrak y}}
\nc{\frakz}{{\mathfrak z}}
\nc{\so}{{\mathfrak so}}
\nc{\slfour}{{\mathfrak sl}_4}
\nc{\one}{{\bf 1}}
\nc{\zero}{{\bf 0}}
\nc{\sha}{\shuffle}
\nc{\inj}{\hookrightarrow}
\nc{\zetas}{{\zeta^\star}}
\nc{\DLN}{{\mathsf{DivLog}_\N}}
\nc{\DLD}{{\mathsf{DivLog}_D}}
\nc{\invdots}{{.\text{\raisebox{0.2ex}{$\cdot$}} \text{\raisebox{0.9ex}{$\cdot$}} }}
\nc{\Sy}{{\mathcal S}}
\nc{\inv}{{\rm inv}}
\nc{\Rac}{{\mathcal R}}
\nc{\dd}{{\mathfrak d}}
\nc{\shD}{{\mbox{\cyr D}}}
\nc{\tz}{\tilde{\zeta}}
\nc{\x}{{\mathtt{x}}}
\nc{\y}{{\mathtt{y}}}
\nc{\Qxy}{\Q\langle \x,\y\rangle}
\DeclareMathOperator{\singL}{{L}}
\nc{\recurO}{{{\mathfrak D}_{z,\barz}}}
\nc{\recurI}{{{\mathfrak I}_{z,\barz}}}
\nc{\app}{{\sharp}}
\nc{\barz}{{\bar{z}}}
\nc{\bB}{{\mathsf{B}}}
\nc{\angX}{{\langle\!\langle \setX\rangle\!\rangle}}
\nc{\revs}{\ola}
\nc{\res}{{\rm{res}}}
\nc{\sv}{{\rm{sv}}}
\nc{\tdx}{{\y}}
\nc{\setLs}{{\mathsf{Ls}}}
\nc{\eps}{{\epsilon}}
\nc{\lsetH}{{\hat{\setH}}}
\nc{\Sooo}{\mathsf{S}^0_{0,0}\,}
\nc{\Solo}{\mathsf{S}^0_{1,0}\,}
\nc{\Sool}{\mathsf{S}^0_{0,1}\,}
\nc{\Soll}{\mathsf{S}^0_{1,1}\,}
\nc{\Sllo}{\mathsf{S}^1_{1,0}\,}
\nc{\Slol}{\mathsf{S}^1_{0,1}\,}
\nc{\Sloo}{\mathsf{S}^1_{0,0}\,}
\nc{\Slll}{\mathsf{S}^1_{1,1}\,}
\nc{\Stot}{\mathsf{S}_\setX}
\nc{\Stotnx}{\mathsf{S}}
\nc{\Zooo}{\Phi^0_{0,0}\,}
\nc{\Zolo}{\Phi^0_{1,0}\,}
\nc{\Zool}{\Phi^0_{0,1}\,}
\nc{\Zloo}{\Phi^1_{0,0}\,}
\nc{\Zllo}{\Phi^1_{1,0}\,}
\nc{\Zlol}{\Phi^1_{0,1}\,}
\nc{\Zoll}{\Phi^0_{1,1}\,}
\nc{\vooo}{V^0_{0,0}\,}
\nc{\volo}{V^0_{1,0}\,}
\nc{\vool}{V^0_{0,1}\,}
\nc{\vloo}{V^1_{0,0}\,}
\nc{\vllo}{V^1_{1,0}\,}
\nc{\vlol}{V^1_{0,1}\,}
\nc{\voll}{V^0_{1,1}\,}
\nc{\vlll}{V^1_{1,1}\,}
\nc{\Fo}{F}
\nc{\lSooo}{\hat{\mathsf{S}}^0_{0,0}\,}
\nc{\lSolo}{\hat{\mathsf{S}}^0_{1,0}\,}
\nc{\lSool}{\hat{\mathsf{S}}^0_{0,1}\,}
\nc{\lSoll}{\hat{\mathsf{S}}^0_{1,1}\,}
\nc{\lSllo}{\hat{\mathsf{S}}^1_{1,0}\,}
\nc{\lSlol}{\hat{\mathsf{S}}^1_{0,1}\,}
\nc{\lSloo}{\hat{\mathsf{S}}^1_{0,0}\,}
\nc{\lSlll}{\hat{\mathsf{S}}^1_{1,1}\,}
\nc{\lStot}{{\hat{\mathsf{S}}_\setX}}
\nc{\lStotnx}{\hat{\mathsf{S}}}
\nc{\lZooo}{\hat{\Phi}^0_{0,0}\,}
\nc{\lZolo}{\hat{\Phi}^0_{1,0}\,}
\nc{\lZool}{\hat{\Phi}^0_{0,1}\,}
\nc{\lZloo}{\hat{\Phi}^1_{0,0}\,}
\nc{\lZllo}{\hat{\Phi}^1_{1,0}\,}
\nc{\lZlol}{\hat{\Phi}^1_{0,1}\,}
\nc{\lvooo}{\hat{V}^0_{0,0}\,}
\nc{\lvolo}{\hat{V}^0_{1,0}\,}
\nc{\lvool}{\hat{V}^0_{0,1}\,}
\nc{\lvloo}{\hat{V}^1_{0,0}\,}
\nc{\lvllo}{\hat{V}^1_{1,0}\,}
\nc{\lvlol}{\hat{V}^1_{0,1}\,}
\nc{\lvoll}{\hat{V}^0_{1,1}\,}
\nc{\lvlll}{\hat{V}^1_{1,1}\,}
\nc{\lZo}{\hat{\Phi}_0}
\nc{\lZp}{\hat{\Phi}_{\pi}}
\nc{\lZH}{\hat{\Phi}_{H}}
\nc{\lZs}{\hat{\Phi}_{s}}
\nc{\lF}{\hat{F}}
\nc{\lH}{{\hat{H}}}
\nc{\gFF}{{\chi}}
\nc{\gX}{{\varPsi}}
\nc{\gXs}{\gX^\star}
\nc{\cv}{{\rm cv}}
\nc{\myone}{{1}}
\nc{\whH}{\widehat{H}}
\nc{\whR}{{R}}
\nc{\KZ}{{\rm KZ}}
\nc{\even}{{\rm ev}}
\nc{\odd}{{\rm od}}
\nc{\na}{\natural}
\nc{\tT}{\tilde{T}}
\nc{\db}{{\mathbb D}}
\nc{\BTF}{{F}}
\nc{\gbb}{{\beta}}
\nc{\bfzt}{{{\boldsymbol \zeta}}}
\nc{\tf}{{\tilde{f}}}
\nc{\sHq}{{\mathfrak h}}
\nc{\tHq}{{\tilde{H}}}
\nc{\stHq}{{\tilde{\sHq}}}
\nc{\hHq}{{\hat{H}}}
\nc{\shHq}{{\hat{\sHq}}}
\nc{\bigcc}{\operatorname{\phantom{a} \text{\raisebox{1pt}{$\scstl \circ$}}\hskip-2.2ex\bigsqcup}}
\nc{\hone}{{\widehat{1}}}
\nc{\genf}{\genfrac{[}{]}{0pt}{}}
\nc{\oll}[1]{\underline{#1}}
\nc{\hyf}{{\text{-}}}
\nc{\bt}{{\bf 2}}
\nc{\vbfp}{{\underline{\bfp}}}
\nc{\vbfs}{{\underline{\bfs}}}
\nc{\vbfm}{{\underline{\bfm}}}
\nc{\vbfw}{{\underline{\bfw}}}
\nc{\wbfp}{{\widetilde{\bfp}}}
\nc{\wbfs}{{\widetilde{\bfs}}}
\nc{\wbfw}{{\widetilde{\bfw}}}
\nc{\wbfm}{{\widetilde{\bfm}}}
\nc{\wdt}{{\widetilde{t}}}
\nc{\wdl}{{\widetilde{\gl}}}
\nc{\wdp}{{\widetilde{p}}}
\nc{\wwbfw}{{\overset{\text{\raisebox{-2pt}{$\approx$}}}{\bfw}}}
\nc{\wwbfp}{{\overset{\text{\raisebox{-2pt}{$\approx$}}}{\bfp}}}
\nc{\wwbfs}{{\overset{\text{\raisebox{-2pt}{$\approx$}}}{\bfs}}}
\nc{\wwbfm}{{\overset{\text{\raisebox{-2pt}{$\approx$}}}{\bfm}}}
\nc{\wwdp}{{\overset{\text{\raisebox{-2pt}{$\approx$}}}{\!p}}}
\nc{\wwdl}{{\overset{\text{\raisebox{-2pt}{$\approx$}}}{\!\gl}}}
\nc{\tb}{{\tilde{b}}}
\nc{\tB}{{\widetilde{B}}}
\nc{\tX}{{\widetilde{X}}}
\nc{\tY}{{\widetilde{Y}}}
\nc{\tbfs}{{\tilde{\bfs}}}
\nc{\tbft}{{\tilde{\bft}}}
\nc{\tbfu}{{\tilde{\bfu}}}
\nc{\ttbfs}{{\hat{\bfs}}}
\nc{\ttbft}{{\hat{\bft}}}
\nc{\ttbfu}{{\hat{\bfu}}}
\nc{\rrho}{{\hat{\rho}}}
\nc{\ggk}{{\hat{\gk}}}
\nc{\ggs}{{\hat{\gs}}}
\nc{\oI}{{\overline{I}}}
\nc{\bI}{{\bar{I}}}
\nc{\bJ}{{\bar{J}}}
\nc{\bK}{{\bar{K}}}
\nc{\bfgb}{{\boldsymbol \gb}}
\nc{\bfgl}{{\boldsymbol \gl}}
\nc{\wbfgl}{{\widetilde{\bfgl}}}
\nc{\wwbfgl}{{\overset{\text{\raisebox{-2pt}{$\approx$}}}{\bfgl}}}
\nc{\bfga}{{\boldsymbol \ga}}
\nc{\bfgs}{{\boldsymbol \gs}}
\nc{\bfxi}{{\boldsymbol \xi}}
\nc{\bfmu}{{\boldsymbol \mu}}
\nc{\bfnu}{{\boldsymbol \nu}}
\nc{\bftau}{{\boldsymbol \tau}}
\nc{\bfchi}{{\boldsymbol \chi}}
\nc{\Czeta}{{\calC}}
\nc{\SC}{{S}}
\nc{\htt}{{\rm ht}}
\nc{\uar}{{\uparrow}}
\nc{\dar}{{\downarrow}}
\nc{\RC}{{R}}
\nc{\QX}{{\Q\langle \setX\rangle}}
\nc{\QY}{{\Q\langle \setY\rangle}}
\nc{\CX}{{\CC\langle \setX\rangle}}
\nc{\CY}{{\CC\langle \setY\rangle}}
\nc{\RX}{{R\langle \setX\rangle}}
\nc{\RY}{{R\langle \setY\rangle}}
\nc{\QXX}{{\Q\langle\!\langle \setX\rangle\!\rangle}}
\nc{\QYY}{{\Q\langle\!\langle \setY\rangle\!\rangle}}
\nc{\CXX}{{\CC\langle\!\langle \setX\rangle\!\rangle}}
\nc{\CYY}{{\CC\langle\!\langle \setY\rangle\!\rangle}}
\nc{\RXX}{{R\langle\!\langle \setX\rangle\!\rangle}}
\nc{\RYY}{{R\langle\!\langle \setY\rangle\!\rangle}}
\nc{\lXr}{{\langle\!\langle \setX\rangle\!\rangle}}
\nc{\lYr}{{\langle\!\langle \setY\rangle\!\rangle}}
\nc{\Cat}[2]{\mathop{\mathrm{\bf Cat}}\limits_{#1}^{#2}}
\nc{\ones}{\{1\}}
\nc{\ud}{{\rm d}}
\title{Finite Multiple zeta Values and \\ Finite Euler Sums}
\author{Jianqiang Zhao}
\date{}
\begin{document}

\maketitle

\begin{abstract}
The alternating multiple harmonic sums are partial sums of the
iterated infinite series defining the Euler sums which are
the alternating version of the multiple zeta values. In this paper,
we present some systematic structural results of the van Hamme type
congruences of these sums, collected as finite Euler sums.
Moreover, we relate this to the
structure of the Euler sums which generalizes the corresponding
result of the multiple zeta values. We also provide a few
conjectures with extensive numerical support.
\end{abstract}



\section{Introduction and preliminaries}
A very fruitful practice in producing interesting congruences
is to consider so-called van Hamme type congruences.
One starts with a well-behaved infinite series whose summands are
given by rational numbers and then, for suitable primes $p$,
looks at the $(p-1)$-st partial sum modulo $p$, or even
modulo higher powers $p$ which leads to super congruences.

A classical result in this spirit is a variant of
Wolstenholme's Theorem \cite{Wolstenholme1862} dating back
to the mid nineteenth century: for all prime $p\ge 5$ we have
\begin{equation}\label{equ:wols}
\sum_{k=1}^{p-1} \frac1{k^2}\equiv 0 \pmod{p},\qquad
\sum_{k=1}^{p-1} \frac1k\equiv 0 \pmod{p^2}.
\end{equation}

Congruences in \eqref{equ:wols} were improved to super congruences
later (see \cite{Glaisher1900b,Sun2000}) in which
one finds that Bernoulli numbers play the key roles
by virtue of the following Faulhaber's formula: (see \cite[\S\,23.1.4-7]{AbramowitzSt1972})
\begin{equation}\label{equ:sump}
\sum_{j=1}^{n-1} j^d=\sum_{r=0}^d {d+1\choose r}\frac{B_r}{d+1}
n^{d+1-r}, \quad \forall n,d\ge 1.
\end{equation}

Generalizing the congruences in \eqref{equ:wols} to
multiple harmonic sums (MHSs) was
the initial motivation for our work in \cite{Zhao2003}.
To define these sums and their alternating version, we start by
looking at a sort of double cover of the set $\N$ of positive integers.
Let $\db$ be the set of \emph{signed numbers}
\begin{equation}\label{equ:dbDefn}
 \db:=\N \cup \ol{\N}, \quad \text{where}\quad \ol{\N}=\{\bar k: k\in\N\}.
\end{equation}
Define the absolute value function $| \cdot |$ on $\db$ by
$|k|=|\bar k|=k$ for all $k\in\N$ and the sign function by
$\sgn(k)=1$ and $\sgn(\bar k)=-1$ for all $k\in\N$.
On $\db$ we define a commutative and associative
binary operation $\oplus$ (called \emph{O-plus})
as follows: for all $a,b\in\db$
\begin{equation}\label{equ:oplusDefn}
    a\oplus b=
\left\{
  \begin{array}{ll}
    \ol{|a|+|b|}, \quad& \hbox{if $\sgn(a)\ne \sgn(b)$;} \\
    |a|+|b|, & \hbox{if $\sgn(a)=\sgn(b)$.} \\
  \end{array}
\right.
\end{equation}

For any $d\in \N$ and $\bfs=(s_1, \ldots, s_d)\in \db^d$ we define
the \emph{alternating multiple harmonic sums} (AMHSs) by
\begin{align*}
H_n(s_1, \ldots, s_d)=&\sum_{n\ge k_1>k_2>\ldots>k_d\ge 1}\prod_{j=1}^d
\frac{\sgn (s_j)^{k_j}}{k_j^{|s_j|}},\\
H^\star_n(s_1, \ldots, s_d)=&\sum_{n\ge k_1\ge k_2\ge\ldots\ge k_d\ge 1}\prod_{j=1}^d
\frac{\sgn (s_j)^{k_j}}{k_j^{|s_j|}}.
\end{align*}
When $\bfs\in \N^d$ both of these have been called
\emph{multiple harmonic sums} (MHSs) in the literature. But more precisely,
the second should be called \emph{multiple harmonic star sums}.
Conventionally, the number $d$ is called the \emph{depth}, denoted by $\dep(\bfs)$, and $|\bfs|:=\sum_{j=1}^d |s_j|$ the \emph{weight}.
For convenience we set  $H_n(\bfs)=0$ if $n<\dep(\bfs)$,
$H_n(\emptyset)=H^\star_n(\emptyset)=1$ for all $n\ge 0$.
To save space, we put $\{s\}^d=(s,\dots,s)$ with $s$ repeating $d$ times.

For example, we have
\begin{thm}\label{thm:homogeneousWols} \emph{(\cite[Theorem~2.13]{Zhao2008a})}
Let $s$ and $d$ be two positive integers. Let $p$ be an odd prime
such that $p\ge d+2$ and $p-1$ divides none of $ds$ and $ks+1$ for
$k=1,\dots, d$. Then
$$H_{p-1}(\{s\}^d) \equiv
\left\{
  \begin{array}{ll}
    0 \pmod{p},   &  \text{if the weight $ds$ is even;} \\
    0 \pmod{p^2}, \quad &  \text{if the weight $ds$ is odd.}
  \end{array}
\right.
$$
In particular, the above are always true if $p\ge ds+3$.
\end{thm}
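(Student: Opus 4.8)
The plan is to pass from the multiple harmonic sum to ordinary power sums by recognizing $H_{p-1}(\{s\}^d)$ as an elementary symmetric function and inverting Newton's identities. Writing $\pi_j := H_{p-1}(js) = \sum_{k=1}^{p-1} k^{-js}$ for the depth-one (power) sums, one has $H_{p-1}(\{s\}^d) = e_d\big(1^{-s},\dots,(p-1)^{-s}\big)$, and expanding $\sum_{d\ge 0} e_d t^d = \exp\big(\sum_{j\ge 1}\tfrac{(-1)^{j-1}}{j}\pi_j t^j\big)$ exhibits $H_{p-1}(\{s\}^d)$ as a $\Q$-linear combination $\sum_{\lambda\vdash d} c_\lambda \pi_\lambda$, where $\pi_\lambda := \prod_i \pi_{\lambda_i}$ and the coefficient $c_\lambda$ has denominator $z_\lambda=\prod_i i^{m_i}m_i!$ built from parts and multiplicities that are all at most $d$. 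Since $p\ge d+2>d$, every $z_\lambda$ is a unit modulo $p^2$, so it suffices to control each $\pi_\lambda$ modulo the relevant power of $p$.

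The two power-sum inputs I would record first concern the single sums $\pi_j$: (i) by Fermat's little theorem $\pi_j\equiv 0\pmod p$ when $(p-1)\nmid js$ and $\pi_j\equiv -1\pmod p$ when $(p-1)\mid js$; and (ii) when $js$ is odd and $(p-1)\nmid(js+1)$, the reflection $k\leftrightarrow p-k$ together with (i) applied to exponent $js+1$ upgrades this to $\pi_j\equiv 0\pmod{p^2}$. Concretely, for $m$ odd one has $\tfrac1{(p-k)^m}\equiv -\tfrac1{k^m}-\tfrac{mp}{k^{m+1}}\pmod{p^2}$, whence $2\pi_j\equiv -mp\sum_k k^{-(m+1)}\equiv 0\pmod{p^2}$ with $m=js$.

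The crux is a counting observation tied to the hypothesis $(p-1)\nmid ds$. Let $e=(p-1)/\gcd(s,p-1)$ be the least index with $(p-1)\mid es$; then $(p-1)\mid \lambda_i s$ exactly when $e\mid \lambda_i$, and $(p-1)\nmid ds$ says precisely that $e\nmid d$. Consequently no partition $\lambda\vdash d$ can have all parts divisible by $e$ (else $e\mid\sum\lambda_i=d$), so in every $\pi_\lambda$ at least one factor $\pi_{\lambda_i}$ has $(p-1)\nmid\lambda_i s$ and hence vanishes mod $p$. This already gives $H_{p-1}(\{s\}^d)\equiv 0\pmod p$ using only $(p-1)\nmid ds$ and $p>d$, settling the even-weight case (and giving the mod $p$ divisibility that the odd case will refine).

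For the odd-weight case I would isolate, within each $\pi_\lambda$, the parts with $e\nmid\lambda_i$: if there are two or more such parts then $\pi_\lambda$ has $p$-adic valuation at least two and is $0\pmod{p^2}$ automatically. The remaining partitions have a unique part $\lambda_{i_0}$ with $e\nmid\lambda_{i_0}$, the other factors $\pi_{\lambda_i}\equiv -1\pmod p$ being units. Since $ds$ odd forces $s$ odd and $\gcd(s,p-1)$ odd, $e=(p-1)/\gcd(s,p-1)$ is even, so the parts divisible by $e$ are even and $\lambda_{i_0}=d-(\text{even})$ is odd; thus $\lambda_{i_0}s$ is odd and $1\le\lambda_{i_0}\le d$, so the hypothesis $(p-1)\nmid(ks+1)$ at $k=\lambda_{i_0}$ lets input (ii) give $\pi_{\lambda_{i_0}}\equiv 0\pmod{p^2}$, whence $\pi_\lambda\equiv 0\pmod{p^2}$. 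Summing over $\lambda$ yields $H_{p-1}(\{s\}^d)\equiv 0\pmod{p^2}$. The main obstacle is exactly this bookkeeping: proving that the single surviving part $\lambda_{i_0}$ is odd, which is what couples the parity of the weight $ds$ to the parity of $e$ and thereby activates the congruence $(p-1)\nmid(ks+1)$. The closing clause $p\ge ds+3$ is then immediate, since in that range $0<ks+1\le ds+1<p-1$ and $0<ds<p-1$, so all hypotheses hold.
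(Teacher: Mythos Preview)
The paper does not actually prove this theorem; it merely quotes it from \cite[Theorem~2.13]{Zhao2008a} as background. Your argument is correct and complete: the identification $H_{p-1}(\{s\}^d)=e_d(1^{-s},\dots,(p-1)^{-s})$, the Newton expansion $e_d=\sum_{\lambda\vdash d}(\pm 1)z_\lambda^{-1}\pi_\lambda$ with $z_\lambda$ a $p$-unit once $p\ge d+2$, and the power-sum congruences (i) and (ii) all hold as you state. The parity step---that when $ds$ is odd the single ``bad'' part $\lambda_{i_0}$ must itself be odd because $e=(p-1)/\gcd(s,p-1)$ is even---is the only delicate point, and you have it right. This symmetric-function reduction to depth-one sums is in fact the method used in the cited reference \cite{Zhao2008a}, so your approach is essentially the same as the original.
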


Congruences in \eqref{equ:wols} have also been generalized
to some other non-homogeneous MHSs in \cite{Hoffman2004,Zhao2008a}
and further to the alternating version
in \cite{TaurasoZh2010,Zhao2011c}. In particular, in \cite{Zhao2011c}
we defined an adele-like structure in which MHSs modulo primes $p$
are collected and form some objects which, after Kaneko and Zagier,
we call, in this paper, the \emph{finite multiple zeta values} (FMZVs) and
the \emph{finite Euler sums} (FESs). See Definition~\ref{defn:FES} for
the precise meaning.

When $\bfs\in\db^d$ we obtain the Euler (star) sums by
taking the limit of AMHSs:
\begin{equation*}
\zeta(\bfs)=\lim_{n\to\infty} H_n(\bfs),\quad
\zeta^\star(\bfs)=\lim_{n\to\infty} H^\star_n(\bfs).
\end{equation*}
When $\bfs\in\N^d$ these become the \emph{multiple zeta values} (MZVs) and
\emph{multiple zeta star values} (MZSVs), respectively.

In recent years, MZVs and Euler sums have appeared in many areas
of mathematics and mathematical physics. The main theorem
obtained by Brown in \cite{Brown2012} implies that every period
of the mixed Tate motives unramified over $\Z$ is a
$\Q[\frac1{2\pi i}]$-linear combination of the MZVs.
It also implies that every MZV is a $\Q$-linear combination
of the \emph{Hoffman elements}, i.e., MZVs with arguments
equal to 2 or 3. Let $\MZV_n$ be the $\Q$-vector space spanned
by the MZVs of weight $n$. Then Brown's result
shows that the $\Q$-dimension $\dim \MZV_n\le d_n$ where the
Padovan numbers $d_n$ has the generating series
\begin{equation*}
    \frac{1}{1-t^2-t^3}=\sum_{n=0}^\infty d_n t^n.
\end{equation*}
Zagier conjectured that in fact $\dim_\Q \MZV_n=d_n$.

Similarly, let $\ES_n$ be the $\Q$-vector space spanned
by all the weight $n$ Euler sums. A conjecture similar to that
of Zagier made by Broadhurst \cite{Broadhurst1996}  says that
\begin{equation}\label{equ:BroadhurstConj}
\sum_{n=0}^\infty (\dim_\Q \ES_n) t^n=\frac{1}{1-t-t^2}.
\end{equation}
Hence $\dim_\Q \ES_n$ should be just Fibonacci numbers.
This has been proved by Deligne \cite{Deligne2010}
under the assumption of a variant Grothendieck's period conjecture.

Very recently, a surprising connection between the
FMZVs (see Definition~\ref{defn:FES}) and MZVs has emerged.
A similar connection between AMHSs and Euler sums should also exist.
We will present these in \S\,\ref{sec:dimConjMZV} and
\S\,\ref{sec:dimConjES} after recalling
many related results and proving some new ones in
\S\S\,\ref{sec:adeleStructure}-\ref{sec:duality}.

The primary goal of this paper is to continue our study of the
congruence properties of MHSs and AMHSs initiated in
\cite{Hoffman2004,TaurasoZh2010,Zhao2008a,Zhao2011c}.
The new results come from two sources: one is the algebra approach
using quasi-symmetric functions with signed powers
(see \S\,\ref{sec:QSym}),
the other is the shuffle relation first discovered by Kaneko for
MHSs and generalized here to AMHSs (see \S\,\ref{sec:FESshuffle}).

\noindent
{\bf Acknowledgement.} This paper was written while the author was
visiting the Max Planck Institute for Mathematics,
L'Institut des Hautes Etudes Scientifiques and
the National Taiwan University in 2015.
He would like to thank F.\ Brown, M.\ Kaneko, M.\ Kontsevich and D.\ Zagier
for a few very enlightening conversations. He also thanks Kh. Hessami Pilehrood
and T. Hessami Pilehrood for their helpful comments.
This work was partly supported by the USA NSF grant DMS-1162116.

\section{An adele-like structure}\label{sec:adeleStructure}
We defined an adele-like structure in \cite{Zhao2011c} as a
suitable theoretical framework in which the
van Hamme type congruences for MHSs and AMHSs can be organized
and further investigated. Let $\calP$ be the set of primes
and let $\ell$ be a positive integer. For any $n\in\N$, put
\begin{equation}\label{equ:poormanA}
\calA_\ell(n):=\prod_{p\in\calP, p\ge n}(\Z/p^\ell\Z),\quad
\calA_\ell:=\prod_{p\in\calP}(\Z/p^\ell\Z)\bigg/\bigoplus_{p\in\calP}(\Z/p^\ell\Z),
\end{equation}
with componentwise addition and multiplication.
We call $\ell$ the \emph{superbity}. This object seems to appear first
in \cite{Kontsevich2009}.

\begin{rem}
Note that there is a natural projection $\calA_\ell(n)\to \calA_\ell$
by inserting $0$'s in the components corresponding to $p<n$. So every
identity that holds in $\calA_\ell(n)$ for any particular $n$
is still valid in $\calA_\ell$,
but not vice versa. Hence, results obtained
in \cite{Zhao2011c} are more precise than the corresponding ones
stated in the framework of this paper.
\end{rem}

Suppose the elements of $\calA_\ell$ are represented by $(a_p):=(a_p)_{p\in\calP}$.
Observe that any two elements $(a_p)$ and $(b_p)$ represent the same one in
$\calA_\ell$ if and only if $a_p=b_p$ for all but finitely many primes $p$.
It is straightforward to see that $\calA_\ell$ is a $\Q$-algebra after
embedding $\Q$ in $\calA_\ell$ ``diagonally'' using the following map:
\begin{equation}\label{equ:iotaForQ}
\begin{split}
  \iota_\ell:  \Q &\,  \lra \quad   \calA_\ell \\
                r &\, \lra   \big(\iota_{p,\ell}(r) \big)_{p\in\calP}  \\
\end{split}
\end{equation}
where $ \iota_\ell(0)=(0)_{p\in\calP} $ and for all nonzero $r\in \Q$
\begin{equation*}
     \iota_{p,\ell} (r):=\left\{
                   \begin{array}{ll}
                     0, & \hbox{if $\ord_p(r)<0$;} \\
                     r \pmod{p^\ell}, \quad & \hbox{if $\ord_p(r)\ge 0$.}
                   \end{array}
                 \right.
\end{equation*}

\begin{prop}\label{prop:embed} \emph{(cf.\ \cite[Lemma 2.1]{Zhao2011c})}
The map $\iota_\ell$ is a monomorphism of algebras. Namely, $\iota_\ell$
gives an embedding of $\Q$
into $\calA_\ell$ as an algebra.
\end{prop}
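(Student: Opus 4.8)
The plan is to verify that $\iota_\ell$ is a unital ring homomorphism with trivial kernel; since $\calA_\ell$ is commutative this is precisely what it means for $\iota_\ell$ to realize $\Q$ as a subalgebra, the $\Q$-algebra structure on $\calA_\ell$ being the one induced by $\iota_\ell$ itself. Each $r\in\Q$ determines a genuine tuple $(\iota_{p,\ell}(r))_{p\in\calP}$, whose class in $\calA_\ell$ is the value $\iota_\ell(r)$, so there is nothing to check for well-definedness. The one subtlety to keep in mind is that equality in $\calA_\ell$ only demands agreement of $p$-components for \emph{all but finitely many} primes $p$; this will render the case distinction in the definition of $\iota_{p,\ell}$ (which sets the component to $0$ whenever $\ord_p(r)<0$) harmless, since for a fixed $r$ that case can occur only for the finitely many primes dividing its denominator.

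First I would record the local picture. Fix a nonzero $r\in\Q$ and write $r=a/b$ with $\gcd(a,b)=1$. Then $\ord_p(r)<0$ holds exactly for the primes $p\mid b$, a finite set, and for every other prime $b$ is invertible modulo $p^\ell$, so $\iota_{p,\ell}(r)$ is the image of $r$ under the reduction homomorphism $\Z_{(p)}\lra\Z/p^\ell\Z$, $r\lmaps a\,b^{-1}$. This is the structural heart of the argument: on a cofinite set of primes, $\iota_{p,\ell}$ agrees with a bona fide ring homomorphism.

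For the homomorphism property, take $r,s\in\Q$ and discard the finitely many primes dividing either denominator. For all remaining $p$, both $r$ and $s$ are $p$-integral, hence so are $r+s$ and $rs$, and the fact that $\Z_{(p)}\to\Z/p^\ell\Z$ is a ring map yields $\iota_{p,\ell}(r+s)=\iota_{p,\ell}(r)+\iota_{p,\ell}(s)$ and $\iota_{p,\ell}(rs)=\iota_{p,\ell}(r)\,\iota_{p,\ell}(s)$. As these hold for all but finitely many $p$, they hold in $\calA_\ell$, so $\iota_\ell$ respects both operations. Unitality is immediate, since $\ord_p(1)=0$ for all $p$ gives $\iota_\ell(1)=(1)_{p\in\calP}$, the identity of $\calA_\ell$.

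Finally, for injectivity it suffices (by additivity) to show the kernel is trivial. Suppose $\iota_\ell(r)=0$ with $r\neq0$, written $r=a/b$ in lowest terms with $a\neq0$. For every prime $p\nmid ab$ one has $\iota_{p,\ell}(r)=a\,b^{-1}\pmod{p^\ell}$, which vanishes only when $p^\ell\mid a$; as $a$ is a fixed nonzero integer this excludes only finitely many further primes. Hence $\iota_{p,\ell}(r)\neq0$ for infinitely many $p$, contradicting $\iota_\ell(r)=0$ in $\calA_\ell$, so $r=0$. I do not expect a genuine obstacle here: the proposition is essentially bookkeeping, and the only point demanding attention is that the two finiteness inputs play complementary roles — the denominator of $r$ supplies the exceptional primes in the homomorphism step, whereas the numerator supplies them in the injectivity step.
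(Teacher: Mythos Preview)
Your proof is correct and complete. The paper's own proof consists of the single word ``Clear,'' so your detailed verification --- checking that $\iota_{p,\ell}$ agrees with the reduction map $\Z_{(p)}\to\Z/p^\ell\Z$ on a cofinite set of primes, and that a nonzero rational has nonzero image at all but finitely many components --- is exactly the routine bookkeeping the paper omits.
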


\begin{proof}
Clear.
\end{proof}

\begin{rem}
It is easy to see that the cardinality of $\calA_\ell$ is
$\aleph_1$, \emph{i.e.}, the same as that of the  real numbers.
It is not too hard to construct an
embedding of $\R$ into $\calA_\ell$ as a set. But we don't
know whether there is an embedding of $\R$ as an algebra.
\end{rem}

The following conventions are quite convenient for us. First,
we abuse our notation by writing $a_p$ for
$(a_p)_{p\in\calP}$ if no confusion arises. In particular,
whenever $p$ appears in an equation in
$\calA_\ell$ ($\ell\ge 2$) it means the element $(p)_{p\in\calP}$.
Second, by writing $(a_p)_{p\ge k}\in \calA_\ell$ we mean
the element $(a_p)_{p\in\calP}$ with $a_p=0$ for all $p<k$.

\begin{defn}
We define a number $a\in \calA_\ell$ to be \emph{algebraic over} $\Q$
if there is a nontrivial polynomial $f(t)\in\Q[t]$ such that $f(a)=0$.
A non-algebraic number of $\calA_\ell$ over $\Q$ is called
\emph{transcendental}. Finitely many numbers
$a_1,\dots,a_n\in\calA_\ell$ are called
\emph{algebraically independent over} $\Q$ if for any nontrivial
polynomial $f(t_1,\dots,t_n)\in\Q[t_1,\dots,t_n]$ we have
$f(a_1,\dots,a_n)\ne 0.$
We call the elements in an infinite subset $S$ of $\calA_\ell$
\emph{algebraically independent over} $\Q$ if any finitely many
elements of $S$ are.
\end{defn}

\begin{defn}
For any non-negative integer $k$, we define
the $\calA_1$-\emph{Bernoulli numbers}
$$\gb_k:=\Big(\frac{B_{p-k}}k  \pmod{p}\Big)_{p>k}. $$
For all $k\ge 2$, we define
the $k$th $\calA_1$-Fermat quotient
\begin{equation*}
q_k:=\Big(\frac{k^{p-1}-1}{p}  \pmod{p} \Big)_{p>k}.
\end{equation*}
\end{defn}

We see that $\gb_{2k}=0$ for all $k\ge 1$ since all Bernoulli-number
$B_{2j+1}=0$ when $j\ge 1$. As for the $\calA_1$-Fermat quotient $q_2$,
according to \cite{KnauerRi2005}, we know that for all primes less than
$1.25\times 10^{15}$ the super congruence $2^{p-1}\equiv 1\pmod{p^2}$ is satisfied
by only two primes $1093$ and $3511$ which are called Wieferich primes.
It is also known that $q_2\ne 0$ in $\calA_1$ under $abc$-conjecture
(see \cite{Silverman1998}).
Moreover, note that we have $q_{k\ell}=q_k+q_\ell$
for all $k,\ell$ and $q_1=0$. So  $q_k$ is an $\calA_1$-analog
of the logarithm value $\log k$.

\begin{conj}\label{conj:A_1BernoulliAlgIndpt}
Put $\gb_1=1$ by abuse of notation.
Suppose $n_1,\dots,n_r\in\N$ such that
$\log n_1,\dots,\log n_r$ are $\Q$-linearly independent. Then
the numbers in the set
\begin{equation*}
\bigcup_{k=0}^\infty
\Big\{\gb_{2k+1}q_{n_1},\dots,\gb_{2k+1}q_{n_r}  \Big\}
\end{equation*}
are algebraically independent.
\end{conj}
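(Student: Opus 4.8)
The plan is to convert algebraic independence in $\calA_1$ into an assertion about residues modulo $p$, and then to attack the resulting problem through the joint distribution of Bernoulli numbers and Fermat quotients as $p$ varies. Any algebraic relation involves only finitely many of the listed numbers, so negating the conclusion yields a single nonzero polynomial $f$ over $\Q$, which after clearing denominators we may take to be primitive in $\Z[\ldots]$, satisfying
\[
f\Big(\big(\tfrac{B_{p-2k-1}}{2k+1}\,q_{n_i}\big)_{k,i}\Big)\equiv 0\pmod p\qquad\text{for all but finitely many }p.
\]
Because $q_{mn}=q_m+q_n$ and $\gb_{2k}=0$, the essential input is the algebraic independence over $\Q$ of the generating family $q_{n_1},\dots,q_{n_r},\gb_3,\gb_5,\dots$. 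Since the paper already records that $q_{n_i}$ is an $\calA_1$-analog of $\log n_i$, and $\gb_{2k+1}$ plays the role of the odd zeta value $\zeta(2k+1)$ while $\gb_{2k}=0$ mirrors the triviality of the even ones, this is exactly the $\calA_1$-counterpart of the (open) statement that $\log n_1,\dots,\log n_r,\zeta(3),\zeta(5),\dots$ are algebraically independent over $\Q$.

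The strategy I would pursue is an equidistribution argument. Let $M$ count the finitely many coordinates occurring in $f$ and consider the residue vectors
\[
v_p:=\Big(\tfrac{B_{p-3}}{3},\tfrac{B_{p-5}}{5},\dots,q_{n_1},\dots,q_{n_r}\Big)\bmod p\ \in\ \F_p^{\,M}.
\]
It suffices to show that the $v_p$ are not eventually confined to the zero locus of any fixed nonzero polynomial; equidistribution of $v_p$ in $\F_p^{\,M}$ as $p\to\infty$ would be more than enough and would force $f$ to vanish identically, a contradiction. The Fermat-quotient block is the more accessible half: additivity of $q_n$ together with the hypothesis that $\log n_1,\dots,\log n_r$ are $\Q$-linearly independent should give $\Q$-linear independence of $q_{n_1},\dots,q_{n_r}$, and existing results on the distribution of Fermat quotients (equidistribution of a single quotient and nontrivial bounds on their correlations) supply partial joint equidistribution of this block.

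The hard part will be the Bernoulli block and, above all, its statistical independence from the Fermat quotients. At present there is essentially no unconditional control over the joint distribution of $\big(B_{p-3}/3,B_{p-5}/5,\dots\big)\bmod p$: even the nonvanishing of a single coordinate for a positive proportion of $p$ is bound up with the distribution of irregular primes, and algebraic independence is far stronger. This is the precise finite-field reflection of the fact that the algebraic independence of the odd zeta values is itself open, so an unconditional proof seems out of reach. The realistic route is therefore conditional: one formulates a finite analog of Grothendieck's period conjecture — in the spirit of Deligne's conditional resolution \cite{Deligne2010} of Broadhurst's conjecture \eqref{equ:BroadhurstConj} cited above — predicting the required equidistribution of the vectors $v_p$, and deduces Conjecture~\ref{conj:A_1BernoulliAlgIndpt} from it.
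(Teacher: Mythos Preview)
The statement you are attempting to prove is labelled as a \emph{Conjecture} in the paper, and the paper offers no proof whatsoever --- it is presented as an open problem, immediately followed by the analogous classical Conjecture~\ref{conj:BernoulliAlgIndpt} on the algebraic independence of $\zeta(2k+1)\log n_i$, which is itself wide open. There is therefore no ``paper's own proof'' to compare your proposal against.

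Your proposal is not a proof either, and you acknowledge this yourself: you reduce the question to a joint equidistribution statement for the vectors $v_p\in\F_p^{\,M}$, observe that even the nonvanishing of a single Bernoulli coordinate for infinitely many $p$ is not known (this is precisely why the paper remarks, after Conjecture~\ref{conj:KanekoZagier}, that one cannot currently show $\gb_5\ne 0$), and conclude that ``an unconditional proof seems out of reach.'' That assessment is correct. What you have written is a heuristic explanation of why the conjecture is plausible and how it parallels the classical transcendence conjectures, together with a conditional route via a hypothetical finite-field period conjecture. This is a reasonable discussion of the landscape, but it does not constitute a proof, and no such proof is expected: the paper treats this as a conjecture precisely because the required control over the joint distribution of $(B_{p-2k-1}\bmod p)_k$ and the Fermat quotients is far beyond current techniques.
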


One should compare this with the following
\begin{conj}\label{conj:BernoulliAlgIndpt}
Put $\zeta(1)=1$ by abuse of notation.
Suppose $n_1,\dots,n_r\in\N$ such that
$\log n_1,\dots,\log n_r$ are $\Q$-linearly independent. Then
the numbers in the set
\begin{equation*}
\bigcup_{k=0}^\infty
\Big\{\zeta(2k+1)\log n_1,\dots,\zeta(2k+1)\log n_r\Big\}
\end{equation*}
are algebraically independent.
\end{conj}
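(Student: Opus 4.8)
Because the assertion is a conjecture lying squarely in transcendence theory, I can at best describe a conditional strategy and isolate the genuine obstruction. The plan is to reinterpret the complex numbers in question as periods of mixed Tate motives and to replace the analytic statement of algebraic independence by a dimension count for a motivic Galois group. Both ingredients are periods of this type: each $\log n_i=\sum_p \ord_p(n_i)\log p$ is a period of a Kummer extension of $\Q(0)$ by $\Q(1)$, while each odd value $\zeta(2k+1)$ ($k\ge1$) is the period of a generator of the one-dimensional group $\mathrm{Ext}^1_{\MT(\Z)}(\Q(0),\Q(2k+1))$, and the term $\zeta(1)=1$ merely records the logarithms themselves.

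First I would, for any prescribed finite subfamily --- say the one involving $\zeta(3),\dots,\zeta(2K+1)$ and $\log n_1,\dots,\log n_r$ --- realize all of these as periods of a single Tannakian subcategory $\calT$ of $\MT(\Z[1/N])$, where $N$ is the product of the primes dividing $n_1\cdots n_r$. The hypothesis that $\log n_1,\dots,\log n_r$ are $\Q$-linearly independent is equivalent to multiplicative independence of the $n_i$, hence to linear independence of their classes in $\mathrm{Ext}^1(\Q(0),\Q(1))\cong \Z[1/N]^\times\otimes\Q$. Second I would compute the dimension of the motivic Galois group of $\calT$: each odd zeta class and each of the $r$ independent Kummer classes contributes one generator to the pro-unipotent radical, the single $\GG_m$ of Tate twists giving the reductive part, so the expected outcome is that the underlying family $\{\zeta(2k+1)\}_{k\ge1}\cup\{\log n_i\}_{i=1}^r$ is algebraically independent. (Here one must not overlook the evident relations $\zeta(2k+1)\log n_i\cdot\zeta(2k'+1)\log n_j=\zeta(2k+1)\log n_j\cdot\zeta(2k'+1)\log n_i$ among the products themselves; the real content is that these rank-one relations are the only ones, equivalently that the field generated by all the products equals $\Q\big(\{\zeta(2k+1)\}_{k\ge1}\cup\{\log n_i\}_{i=1}^r\big)$.) Third, Grothendieck's period conjecture would then identify the transcendence degree of the period field with this Galois dimension, yielding the claimed independence.

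The \textbf{main obstacle} is that every step of the conclusion is itself open, so the argument is at present entirely conditional. Grothendieck's period conjecture is unproved, and unconditionally one knows strikingly little: $\zeta(3)$ is irrational (Ap\'ery), infinitely many odd zeta values are irrational (Ball--Rivoal), and at least one of $\zeta(5),\zeta(7),\zeta(9),\zeta(11)$ is irrational (Zudilin), yet not a single instance of algebraic independence of two odd zeta values is known. Algebraic independence of $\log 2,\log3,\log5,\dots$ is likewise open, Baker's theorem delivering only their linear independence over $\overline\Q$. Thus the motivic picture convincingly explains why the conjecture should hold and even predicts the precise ring of relations, but an unconditional proof would require advances in transcendence theory far beyond present reach; the realistic goal is a proof conditional on Grothendieck's period conjecture together with the (standard but unverified) motivic computations above. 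A further source of confidence is the mirror philosophy of this paper: Conjecture~\ref{conj:A_1BernoulliAlgIndpt} is the exact $\calA_1$-analogue, with $q_{n_i}$ playing the role of $\log n_i$ and $\gb_{2k+1}$ that of $\zeta(2k+1)$, so any progress on one side would reinforce the other.
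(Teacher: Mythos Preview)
The paper offers no proof: the statement is presented purely as an open conjecture, placed for comparison next to its $\calA_1$-analogue, and the text immediately moves on. Your proposal correctly recognizes this and does exactly what is appropriate in the circumstances---sketching a conditional route via mixed Tate motives and Grothendieck's period conjecture while naming the genuine transcendence-theoretic obstructions (Ap\'ery, Ball--Rivoal, Zudilin, Baker). There is nothing on the paper's side to compare against.

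One point in your write-up deserves to be promoted rather than buried in a parenthesis. Your observation about the ``rank-one relations'' is not a technicality but an actual defect in the literal statement: for $r\ge 2$ and any two indices $k\ne k'$ one has
\[
\big(\zeta(2k+1)\log n_i\big)\cdot\big(\zeta(2k'+1)\log n_j\big)
=\big(\zeta(2k+1)\log n_j\big)\cdot\big(\zeta(2k'+1)\log n_i\big),
\]
so the products listed in the conjecture are \emph{never} algebraically independent once $r\ge 2$. Your reinterpretation---that the intended content is algebraic independence of the underlying set $\{\zeta(2k+1)\}_{k\ge1}\cup\{\log n_i\}_{i\le r}$, equivalently that the field generated by all the products has the expected transcendence degree---is almost certainly what the author means (and is what the motivic heuristic predicts, and what the parallel Conjecture on $\gb_{2k+1}q_{n_i}$ must likewise mean). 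You should state this as an explicit correction to the formulation rather than a side remark.
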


Like $\calA_1$-Bernoulli numbers and $\calA_1$-Fermat quotients,
every object similarly defined for each prime can be
put into $\calA_1$. Here is a very short list for such numbers
that we believe are of some interest.

\begin{itemize}
\item
Wilson quotients
$\frac{(p-1)!+1}{p}$
\item
$p$-adic $\Gamma$ values at any
rational point
$\Gamma_p(a/b)$
\item
$F_{p-(\frac{5}{p})}/p$: the Fibonacci quotient (OEIS A092330)
\item
$U_{p-(\frac{2}{p})}(2, -1)/p$: the Pell quotient (OEIS A000129)
\item
$U_{p-(\frac{3}{p})}(4, 1)/p$: a quotient related to the Lucas sequence 1, 4, 15, 56, 209, \dots (OEIS A001353)
\item
$U_{p-(\frac{6}{p})}(10, 1)/p$: a quotient related to the Lucas sequence 1, 10, 99, 980, 9701, \dots (OEIS A004189)
\end{itemize}

\begin{defn}\label{defn:FES}
For $\bfs=(s_1,\dots,s_d)\in\db^d$ (see \eqref{equ:dbDefn}), we define
\begin{align*}
\zeta_{\calA_\ell}(\bfs)
  &=\sum_{p>k_1>\dots>k_d\ge1}\frac{\sgn(s_1)^{k_1}\cdots \sgn(s_d)^{k_d}}
								{k_1^{|s_1|}\cdots k_d^{|s_d|}}\in \calA_\ell,\\
\zeta_{\calA_\ell}^{\star}(\bfs)
  &=\sum_{p>k_1\ge\dots\ge k_n\ge1}\frac{\sgn(s_1)^{k_1}\cdots \sgn(s_d)^{k_d}}
									{k_1^{|s_1|}\dotsm k_d^{|s_d|}}\in \calA_\ell.
\end{align*}
These elements in $\calA_\ell$ are called \emph{finite Euler sums}
(FESs) of superbity $\ell$. Further, if all the $s_j$'s are positive integers then,
after Kaneko and Zagier, we call them \emph{finite multiple zeta values}
(FMZVs) of superbity $\ell$.
\end{defn}

\begin{rem}
We have intentionally dropped the word ``star'' for the
$\zeta_{\calA_\ell}^{\star}$-values since we will see that
they can be expressed by all the $\zeta_{\calA_\ell}$-values.
See \eqref{equ:FMZVSH} in Theorem~\ref{thm:Hoff}.
\end{rem}

For example, Theorem~\ref{thm:homogeneousWols}  can be rephrased as
\begin{thm}\label{thm:homogeneousWols1}
Let $s$ and $d$ be two positive integers. Then
\begin{align}\label{equ:homogeneousWols}
\zeta_{\calA_1}(\{s\}^d)=0 \quad &\text{if the weight $ds$ is even;}\\
\zeta_{\calA_2}(\{s\}^d)=0 \quad &\text{if the weight $ds$ is odd.}\phantom{a}
\label{equ:homogeneousWolsOddWeight}
\end{align}
\end{thm}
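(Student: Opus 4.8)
The plan is to recognize that this statement is a direct reformulation of Theorem~\ref{thm:homogeneousWols} in the language of the ring $\calA_\ell$, so the work reduces to translating a family of congruences into a single equality in $\calA_\ell$. First I would unpack the definition of $\zeta_{\calA_\ell}(\{s\}^d)$. Since $s$ is a positive integer, every entry of the tuple $\{s\}^d$ is positive, so $\sgn(s_j)=1$ and $|s_j|=s$ for all $j$, and the summand collapses to $1/(k_1^s\cdots k_d^s)$. Because the summation condition $p>k_1>\dots>k_d\ge 1$ is, for a prime $p$, exactly $p-1\ge k_1>\dots>k_d\ge 1$, we obtain
\begin{equation*}
\zeta_{\calA_\ell}(\{s\}^d)=\Big(H_{p-1}(\{s\}^d)\pmod{p^\ell}\Big)_{p\in\calP}\in\calA_\ell.
\end{equation*}

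Next I would invoke the defining feature of $\calA_\ell$: an element represented by $(a_p)$ equals $0$ precisely when $a_p\equiv 0\pmod{p^\ell}$ for all but finitely many primes $p$. Hence, to prove $\zeta_{\calA_1}(\{s\}^d)=0$ when $ds$ is even (resp. $\zeta_{\calA_2}(\{s\}^d)=0$ when $ds$ is odd), it suffices to show that $H_{p-1}(\{s\}^d)\equiv 0\pmod{p}$ (resp. $\pmod{p^2}$) holds for all sufficiently large primes $p$.

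This last point is supplied immediately by the ``in particular'' clause of Theorem~\ref{thm:homogeneousWols}, which guarantees the relevant congruence for every prime $p\ge ds+3$. Only finitely many primes fail this bound, and their components—though possibly nonzero—are killed in the quotient defining $\calA_\ell$. Splitting on the parity of the weight $ds$ then yields \eqref{equ:homogeneousWols} and \eqref{equ:homogeneousWolsOddWeight} respectively.

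I do not expect a substantive obstacle here, since the result is a repackaging of an already established congruence. The only points that require genuine care are the identification of the sum defining $\zeta_{\calA_\ell}(\{s\}^d)$ with the partial harmonic sum $H_{p-1}(\{s\}^d)$, and the observation that the finite set of ``bad'' primes excluded by the hypotheses of Theorem~\ref{thm:homogeneousWols}—those violating $p\ge d+2$ or with $p-1$ dividing one of $ds,\,ks+1$—contributes nothing in $\calA_\ell$. The clean uniform bound $p\ge ds+3$ lets me sidestep tracking these conditions individually.
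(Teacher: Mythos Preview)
Your proposal is correct and matches the paper's approach exactly: the paper gives no explicit proof of this theorem, introducing it simply with ``Theorem~\ref{thm:homogeneousWols} can be rephrased as'' and leaving the translation implicit. Your unpacking of the definition of $\zeta_{\calA_\ell}$ and the observation that the finitely many exceptional primes vanish in the quotient is precisely the justification the paper omits.
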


Theorem~\ref{thm:homogeneousWols1} has been generalized to higher superbities
by Zhou and Cai.
\begin{thm}\label{thm:homogeneousWols2} \emph{(\cite[Remark]{ZhouCa2007})}
Let $d,s\in\N$. If $ds$ is even then
\begin{equation}\label{equ:wolHomoWtEven}
\zeta_{\calA_2}(\{s\}^d)=(-1)^{d-1} s \gb_{ds+1} p.
\end{equation}
If $ds$ is odd then
\begin{equation}\label{equ:wolHomoWtOdd}
\zeta_{\calA_3}(\{s\}^d)=(-1)^d\frac{s(ds+1)}{2}\gb_{ds+2} p^2.
\end{equation}
\end{thm}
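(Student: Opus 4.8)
The plan is to compute the homogeneous finite multiple zeta value $\zeta_{\calA_\ell}(\{s\}^d)$ directly from its definition as a partial sum $H_{p-1}(\{s\}^d)$ and extract the leading term in $p$ by expanding the summand via Faulhaber's formula \eqref{equ:sump}. The key observation is that the homogeneous sum $H_{p-1}(\{s\}^d)$ over a power sum admits a clean generating-function or symmetric-function description: since all entries equal $s$, we have
$$
H_{p-1}(\{s\}^d)=\sum_{p-1\ge k_1>\dots>k_d\ge1}\frac{1}{(k_1\cdots k_d)^{s}},
$$
which is (up to sign normalization) the degree-$d$ elementary symmetric polynomial in the variables $1/k^s$ for $k=1,\dots,p-1$. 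Newton's identities then relate this to the power sums $P_m:=\sum_{k=1}^{p-1}k^{-ms}$, each of which can be evaluated modulo a suitable power of $p$ using \eqref{equ:sump}.

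\emph{First} I would establish the single-variable case $d=1$, where $H_{p-1}(\{s\})=\sum_{k=1}^{p-1}k^{-s}$. Writing $k^{-s}\equiv k^{p-1-s}\pmod{p}$ (and modulo $p^2$ one expands $k^{-s}=k^{(p-1)-s}(1+p\,q_{\cdots})$-type corrections, or more cleanly works with $\sum k^{p-1-s}$ via Faulhaber), the formula \eqref{equ:sump} with $d\mapsto (p-1)-s$ gives the leading Bernoulli-number contribution, yielding precisely $(-1)^{0}s\,\gb_{s+1}p$ modulo $p^2$ when $s$ is odd (weight odd) and a higher-order $\gb_{s+2}p^2$ term when $s$ is even. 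This pins down the normalization and sign, matching \eqref{equ:wolHomoWtEven}–\eqref{equ:wolHomoWtOdd} in depth one. \emph{Next} I would feed these power-sum evaluations into Newton's identities for the elementary symmetric function of depth $d$, tracking which terms survive modulo $p^2$ (even weight case) or $p^3$ (odd weight case).

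The crux is the bookkeeping: by Theorem~\ref{thm:homogeneousWols1} the whole sum already vanishes to one order lower (mod $p$ when $ds$ even, mod $p^2$ when $ds$ odd), so every power sum $P_m$ contributes only through its \emph{leading} $p$-adic term, and the combinatorics of Newton's identities must collapse to a single surviving monomial. I expect that modulo the relevant power of $p$ only the partition consisting of the power sum $P_1$ raised appropriately together with $P_1^{d}$-type contributions matter, and that the binomial coefficient $\binom{ds+1}{1}$ (resp.\ $\binom{ds+2}{2}$) from \eqref{equ:sump} produces the factor $s$ (resp.\ $\tfrac{s(ds+1)}{2}$), with the sign $(-1)^{d-1}$ (resp.\ $(-1)^d$) emerging from the alternating structure of Newton's identities. \textbf{The hard part will be} showing that all the cross-terms in the symmetric-function expansion—products of several power sums—vanish to the required order, so that the answer is genuinely the single monomial claimed rather than a more complicated Bernoulli-number combination; controlling these lower-order cancellations uniformly in $d$ and $s$ is where the real work lies, and one likely needs Wolstenholme-type vanishing of $P_1=H_{p-1}(1)$ modulo $p^2$ together with the parity vanishing $\gb_{2k}=0$ to kill the unwanted terms.
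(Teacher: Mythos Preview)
The paper does not give its own proof of this theorem: it is stated with a citation to Zhou and Cai \cite{ZhouCa2007} and no argument is supplied. So there is nothing in the paper to compare your sketch against directly.

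That said, your approach via Newton's identities is sound and is essentially the standard route to results of this type. A few corrections and simplifications are worth noting. The surviving term is \emph{not} a power of $P_1$ as you suggest; it is the single length-one partition $\lambda=(d)$, contributing $\tfrac{(-1)^{d-1}}{d}P_d=\tfrac{(-1)^{d-1}}{d}H_{p-1}(ds)$. Substituting the depth-one evaluation of $H_{p-1}(ds)$ then gives exactly the claimed formulas, with the factor $1/d$ cancelling against the $ds$ in the depth-one coefficient.

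The vanishing of the cross-terms, which you flag as the hard part, is in fact quite clean. Every $P_i=H_{p-1}(is)$ is $O(p)$, so any product of two or more of them is $O(p^2)$; this already finishes the even-weight case. For the odd-weight case one needs products of length $\ge2$ to be $O(p^3)$. Length $\ge3$ is automatic. For length exactly two, note that $ds$ odd forces both $d$ and $s$ odd, so any two-part partition $(a,b)$ of $d$ has $a,b$ of opposite parity; hence one of $as,bs$ is odd and the corresponding $P$ is $O(p^2)$ by the depth-one odd case, making the product $O(p^3)$. No appeal to Wolstenholme for $P_1$ is needed beyond this. The only genuine input is the depth-one congruence for $H_{p-1}(n)$ modulo $p^2$ and $p^3$, which is classical (Glaisher, Sun) rather than a direct consequence of Faulhaber's formula for positive powers; your invocation of \eqref{equ:sump} there is a bit loose, but the required congruences are standard.
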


For FESs, by using Euler numbers we obtained the following theorem in
\cite{TaurasoZh2010}.
\begin{thm} \label{thm:FESdepth1} \emph{(\cite[Corollary~2.3]{TaurasoZh2010})}
Let $s\in \N$. Then
\begin{alignat}{2}\label{equ:FESdepth1odd}
\zeta_{\calA_1}(\bar{s})=&
\left\{
  \begin{array}{ll}
    -2 q_2, & \  \\
   -2(1-2^{1-s}) \gb_{s},   & \
  \end{array}
\right.
& &\begin{array}{ll}
   \hbox{if $s=1$;} & \ \\
   \hbox{if $s>2$ is odd;}& \
  \end{array}
\\
\zeta_{\calA_2}(\bar{s})=&\,  s(1-2^{-s})  p \gb_{s+1}, &  &\ \text{if $s$ is even.}  \label{equ:FESdepth1even}
\end{alignat}
\end{thm}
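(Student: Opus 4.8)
The plan is to reduce each statement modulo $p$ to an alternating power sum, evaluate that sum with Euler polynomials, and then bootstrap the even-weight (superbity $2$) statement from the odd-weight one via the reflection $k\mapsto p-k$. First I would rewrite the depth-one finite Euler sums as power sums modulo $p$: for $1\le s\le p-2$ and $1\le k\le p-1$ Fermat's theorem gives $k^{-s}\equiv k^{p-1-s}\pmod p$, hence
\[
\zeta_{\calA_1}(\bar{s})\equiv\sum_{k=1}^{p-1}(-1)^k k^{p-1-s}\pmod p.
\]
Setting $d:=p-1-s$, the right-hand side is an alternating power sum, which I would evaluate through the Euler polynomials $E_d(x)$. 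From the generating identity $\sum_{k=0}^{p-1}(-1)^k k^d=\tfrac12\bigl(E_d(0)+E_d(p)\bigr)$ and the fact that the $E_d$ have $p$-integral (odd-denominator) coefficients, so that $E_d(p)\equiv E_d(0)\pmod p$, one gets $\sum_{k=1}^{p-1}(-1)^k k^d\equiv E_d(0)\pmod p$. Substituting the classical value $E_d(0)=\frac{2(1-2^{d+1})}{d+1}B_{d+1}$ with $d+1=p-s$, and using $2^{p-1}\equiv 1$ together with $\frac1{p-s}\equiv-\frac1s\pmod p$, collapses this to $-2(1-2^{1-s})\frac{B_{p-s}}{s}=-2(1-2^{1-s})\gb_s$, which is \eqref{equ:FESdepth1odd} for odd $s\ge 3$.

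The case $s=1$ must be treated separately and is the first delicate point, which is exactly why the theorem isolates it. There $d+1=p-1$, so the non-$p$-integral Bernoulli number $B_{p-1}$ enters. Here I would invoke von Staudt--Clausen in the form $pB_{p-1}\equiv-1\pmod p$ together with the definition $2^{p-1}-1=pq_2$ of the Fermat quotient; after clearing the factor of $p$, the expression $\frac{2(1-2^{p-1})}{p-1}B_{p-1}$ becomes $-2q_2$, giving the $s=1$ line of \eqref{equ:FESdepth1odd}.

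For even $s$ I would not recompute modulo $p^2$ directly, but instead exploit the reflection $k\mapsto p-k$. Expanding $(p-k)^{-s}\equiv(-1)^s\bigl(k^{-s}+sp\,k^{-s-1}\bigr)\pmod{p^2}$ and using $(-1)^{p-k}=-(-1)^k$ yields
\[
\zeta_{\calA_2}(\bar{s})\equiv-(-1)^s\bigl(\zeta_{\calA_2}(\bar{s})+sp\,\zeta_{\calA_1}(\overline{s+1})\bigr)\pmod{p^2}.
\]
For even $s$ the coefficient $1+(-1)^s=2$ is invertible, so $\zeta_{\calA_2}(\bar{s})\equiv-\tfrac{sp}{2}\,\zeta_{\calA_1}(\overline{s+1})\pmod{p^2}$. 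Since $s+1$ is odd and $\ge 3$, the odd formula already established (needed only modulo $p$, because of the factor $p$) gives $\zeta_{\calA_1}(\overline{s+1})\equiv-2(1-2^{-s})\gb_{s+1}\pmod p$, and substituting produces $\zeta_{\calA_2}(\bar{s})\equiv s(1-2^{-s})\,p\,\gb_{s+1}$, which is \eqref{equ:FESdepth1even}. (The same reflection with odd $s$ only reconfirms $\zeta_{\calA_1}(\overline{s+1})\equiv0\pmod p$, consistent with the odd-weight vanishing in Theorem~\ref{thm:homogeneousWols1}, and carries no information about odd $s$ itself, which is why the Euler-polynomial evaluation is indispensable there.)

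The main obstacle is the odd case, i.e.\ Step~2: pinning down the exact constant and sign in passing from $\sum_{k}(-1)^k k^{d}$ to $E_d(0)$ and then to $\gb_s$, and in particular the $s=1$ exception, where the pole of $B_{p-1}$ must be traded for the Fermat quotient $q_2$ via von Staudt--Clausen and $2^{p-1}-1=pq_2$. Once the odd-weight evaluation is secured, the even-weight statement follows essentially formally from the reflection identity, so no independent computation modulo $p^2$ is required.
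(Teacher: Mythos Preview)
Your argument is correct. The paper itself does not prove Theorem~\ref{thm:FESdepth1}; it is quoted from \cite[Corollary~2.3]{TaurasoZh2010}, and the only hint given is the sentence ``by using Euler numbers we obtained the following theorem.'' Your route via Euler polynomials is exactly in that spirit: the identity
\[
\sum_{k=0}^{p-1}(-1)^k k^{d}=\tfrac12\bigl(E_d(0)+E_d(p)\bigr),
\qquad E_d(0)=\frac{2(1-2^{d+1})}{d+1}B_{d+1},
\]
together with the $p$-integrality of the (dyadic) coefficients of $E_d$, reduces the odd case to the claimed Bernoulli expression, and the $s=1$ exception is handled correctly by writing $E_{p-2}(0)=\frac{-2q_2}{p-1}\cdot(pB_{p-1})$ and invoking von Staudt--Clausen. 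For even $s$ your reflection $k\mapsto p-k$ is precisely the depth-one instance of the reversal relation \eqref{equ:reversal2} in Theorem~\ref{thm:reversal2}, so the superbity-$2$ statement indeed follows formally from the superbity-$1$ odd-weight formula without any fresh computation modulo $p^2$.

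In short, your proof matches the approach of the cited source and, where the present paper supplies tools (the reversal relation), uses exactly those; there is no gap.
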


The following results in depth 2 and 3 will be very useful.

\begin{thm}\label{thm:FMZVdepth2superbity1} \emph{(\cite[Theorem~3.1]{Zhao2008a},
\cite[Theorem~6.1]{Hoffman2004})}
For all positive integers $s$ and $t$
\begin{equation}\label{equ:depth2FMZV}
\zeta_{\calA_1}(s,t)=\zeta^\star_{\calA_1}(s,t)= (-1)^s \binom{s+t}{s}\gb_{s+t} .
\end{equation}
If $s,t\in\N$ and $w=s+t$ is \emph{odd}, then
\begin{equation}\label{equ:depth2FES}
\zeta_{\calA_1}(\bar{s},t)=\zeta_{\calA_1}(s,\bar{t})
=-\zeta^\star_{\calA_1}(\bar{s},t)=-\zeta^\star_{\calA_1}(s,\bar{t})
=(1-2^{1-w})\gb_w.
\end{equation}
\end{thm}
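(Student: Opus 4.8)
The plan is to treat the two displayed identities separately, reducing everything to elementary power-sum congruences together with Faulhaber's formula \eqref{equ:sump} and the depth-one evaluations already recorded in Theorem~\ref{thm:FESdepth1}. Throughout I fix a large prime $p$ and compute each finite value through its representative modulo $p$, so that all congruences automatically descend to identities in $\calA_1$ for all but finitely many $p$.

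For \eqref{equ:depth2FMZV} I would start from the representative $\sum_{p>k>j\ge1}k^{-s}j^{-t}\pmod p$ of $\zeta_{\calA_1}(s,t)$. Using Fermat's little theorem to replace $k^{-s}$ by $k^{a}$ and $j^{-t}$ by $j^{b}$, where $a=p-1-s$ and $b=p-1-t$, the value becomes $\sum_{k=1}^{p-1}k^{a}\sum_{j=1}^{k-1}j^{b}\pmod p$. Applying Faulhaber's formula \eqref{equ:sump} to the inner sum and interchanging summations reduces the problem to $\sum_{k=1}^{p-1}k^{m}\equiv -1\pmod p$ when $(p-1)\mid m$ and $\equiv 0$ otherwise. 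Inspecting the exponent $a+b+1-r=2(p-1)+1-w-r$ with $w=s+t<p-1$ shows that exactly one index, $r=p-w$, contributes, giving $\zeta_{\calA_1}(s,t)\equiv-\tfrac{1}{p-t}\binom{p-t}{s}B_{p-w}\pmod p$. A routine reduction of this binomial modulo $p$, using $\binom{p-t}{s}\equiv(-1)^s t(t+1)\cdots(t+s-1)/s!$ and $(p-t)^{-1}\equiv-t^{-1}$, turns this into $(-1)^s\binom{w}{s}B_{p-w}/w=(-1)^s\binom{s+t}{s}\gb_{s+t}$. No parity hypothesis is needed: when $w$ is even the odd-index Bernoulli number $B_{p-w}$ vanishes, so the formula correctly collapses to $0$. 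The star equality is then immediate, since $\zeta^\star_{\calA_1}(s,t)-\zeta_{\calA_1}(s,t)$ is the diagonal term $\sum_{k=1}^{p-1}k^{-w}=H_{p-1}(w)\equiv0\pmod p$.

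For \eqref{equ:depth2FES} with $w=s+t$ odd I would combine the stuffle product with a reflection symmetry. The stuffle relation gives $\zeta_{\calA_1}(\bar s)\zeta_{\calA_1}(t)=\zeta_{\calA_1}(\bar s,t)+\zeta_{\calA_1}(t,\bar s)+\zeta_{\calA_1}(\overline{w})$; since $\zeta_{\calA_1}(t)=0$ for $t<p-1$ and $\zeta_{\calA_1}(\overline{w})=-2(1-2^{1-w})\gb_w$ by Theorem~\ref{thm:FESdepth1}, this yields $\zeta_{\calA_1}(\bar s,t)+\zeta_{\calA_1}(t,\bar s)=2(1-2^{1-w})\gb_w$. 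Next, the substitution $k\mapsto p-k$ in each summation index, using $(p-k)^{-u}\equiv(-1)^u k^{-u}$ and $(-1)^{p-k}=-(-1)^k$, converts $\zeta_{\calA_1}(\bar s,t)$ into $-(-1)^w\zeta_{\calA_1}(t,\bar s)$; for $w$ odd the prefactor is $+1$, so $\zeta_{\calA_1}(\bar s,t)=\zeta_{\calA_1}(t,\bar s)$. Combining the two relations gives $\zeta_{\calA_1}(\bar s,t)=(1-2^{1-w})\gb_w$, and the same reflection applied to $\zeta_{\calA_1}(s,\bar t)$ identifies it with $\zeta_{\calA_1}(\bar t,s)$, hence also with $(1-2^{1-w})\gb_w$ by the case just proved with $s,t$ interchanged. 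Finally the star values follow from the diagonal differences $\zeta^\star_{\calA_1}(\bar s,t)-\zeta_{\calA_1}(\bar s,t)=\sum_{k=1}^{p-1}(-1)^k k^{-w}=\zeta_{\calA_1}(\overline{w})=-2(1-2^{1-w})\gb_w$, and identically for $\zeta^\star_{\calA_1}(s,\bar t)$, which flips the sign and produces $-\zeta^\star_{\calA_1}(\bar s,t)=-\zeta^\star_{\calA_1}(s,\bar t)=(1-2^{1-w})\gb_w$.

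I expect the main obstacle to be the bookkeeping in the first part: correctly isolating the single surviving Faulhaber term $r=p-w$ (verifying that no other $r\in[0,b]$ meets the divisibility condition, which uses $s,t\ge1$ and $w<p-1$) and then carrying out the modular reduction of $\binom{p-t}{s}/(p-t)$ without sign errors. In the second part the only delicate point is tracking the signs introduced by the reflection, in particular the factor $-(-1)^w$ that makes the symmetry hold precisely when $w$ is odd; everything else is a formal consequence of the stuffle relation, Wolstenholme's vanishing $\zeta_{\calA_1}(t)=0$, and the depth-one evaluations in Theorem~\ref{thm:FESdepth1}.
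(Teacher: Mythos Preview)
Your proof is correct. Note, however, that the present paper does not actually prove this theorem: it is stated with citations to \cite[Theorem~3.1]{Zhao2008a} and \cite[Theorem~6.1]{Hoffman2004}, and no argument is given here. So there is no ``paper's own proof'' to compare against; you have supplied one where the paper defers to the literature.

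That said, your approach is essentially the classical one used in those references. For \eqref{equ:depth2FMZV} you run the Faulhaber--Bernoulli computation exactly as in \cite{Zhao2008a}: lift negative exponents via Fermat, expand the inner sum by \eqref{equ:sump}, and isolate the unique surviving index $r=p-w$ using the power-sum congruence $\sum_{k=1}^{p-1}k^m\equiv-[\,(p-1)\mid m\,]\pmod p$. Your bookkeeping is right, including the $p$-integrality of each Faulhaber term (von Staudt--Clausen rules out $p$ in the denominator of $B_r$ for $0\le r\le p-1-t$) and the reduction $-\tfrac{1}{p-t}\binom{p-t}{s}\equiv(-1)^s\tfrac{1}{w}\binom{w}{s}\pmod p$. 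For \eqref{equ:depth2FES} your combination of the stuffle identity, the reversal symmetry $k\mapsto p-k$ (which is exactly Theorem~\ref{thm:reversal} specialized to depth two), and the depth-one input from Theorem~\ref{thm:FESdepth1} is clean and fully within the toolkit the paper develops; this is arguably more in the spirit of the present paper than a direct Bernoulli computation would be.

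The only cosmetic remark is that your reflection step is literally the reversal relation \eqref{equ:reversal} for $\bfs=(\bar s,t)$, so you could simply cite Theorem~\ref{thm:reversal} rather than rederive it.
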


\begin{thm}\label{thm:FMZVdepth3superbity1} \emph{(\cite[Theorem~3.5]{Zhao2008a},
\cite[Theorem~6.2]{Hoffman2004})}
Let $(l,m,n)\in \N^3$. If $w=l+m+n$ is \emph{odd} then
\begin{equation*}
-\zeta_{\calA_1}(l,m,n)
=\zeta^\star_{\calA_1}(l,m,n)
=\left[(-1)^{l}\binom{w}{l}-(-1)^{n}\binom{w}{n}\right] \frac{\gb_{w}}{2}.
\end{equation*}
\end{thm}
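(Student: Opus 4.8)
The plan is to express all six depth-three values obtained by permuting $(l,m,n)$ in terms of the depth-one and depth-two values already recorded in Theorems~\ref{thm:homogeneousWols1} and~\ref{thm:FMZVdepth2superbity1}, using nothing beyond the stuffle (harmonic) product and the reversal symmetry. The reversal symmetry comes from the substitution $k_j\mapsto p-k_j$ in the defining partial sum: since $1/k^s\equiv(-1)^s/(p-k)^s\pmod p$, reversing the order of the summation indices gives
\[
\zeta_{\calA_1}(s_1,\dots,s_d)=(-1)^{s_1+\cdots+s_d}\,\zeta_{\calA_1}(s_d,\dots,s_1).
\]
Since $w$ is odd, I would use this to reduce the six permutations to three unknowns $A=\zeta_{\calA_1}(l,m,n)$, $B=\zeta_{\calA_1}(m,l,n)$, $C=\zeta_{\calA_1}(l,n,m)$, with $\zeta_{\calA_1}(n,m,l)=-A$, $\zeta_{\calA_1}(n,l,m)=-B$, $\zeta_{\calA_1}(m,n,l)=-C$.

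First I would assemble the ingredients. Every depth-one value $\zeta_{\calA_1}(s)$ with $s\ge1$ vanishes: Wolstenholme for $s=1$, and the $d=1$ case of Theorem~\ref{thm:homogeneousWols1} (reducing $\calA_2$ to $\calA_1$) for $s\ge2$. The depth-two values come from \eqref{equ:depth2FMZV}: writing $P_x:=(-1)^x\binom{w}{x}\gb_w$, I get $\zeta_{\calA_1}(l,m+n)=P_l$, $\zeta_{\calA_1}(m,l+n)=P_m$, $\zeta_{\calA_1}(n,l+m)=P_n$, while the reversed pairs pick up a sign because $w$ is odd, e.g.\ $\zeta_{\calA_1}(l+m,n)=(-1)^{l+m}\binom{w}{n}\gb_w=-P_n$ using $\binom{w}{l+m}=\binom{w}{n}$ and $(-1)^{l+m}=-(-1)^n$.

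Next I would expand three stuffle products of a (vanishing) depth-one value by a depth-two value; each product is $0$, and after rewriting the ``wrong-order'' depth-three terms by reversal I obtain
\begin{align*}
0=\zeta_{\calA_1}(l)\,\zeta_{\calA_1}(m,n)&=A+B-C-P_n+P_m,\\
0=\zeta_{\calA_1}(m)\,\zeta_{\calA_1}(l,n)&=A+B+C-P_n+P_l,\\
0=\zeta_{\calA_1}(n)\,\zeta_{\calA_1}(l,m)&=A-B+C-P_m+P_l.
\end{align*}
The $3\times3$ coefficient matrix has determinant $4\ne0$, so solving is forced: $B=\tfrac12(P_n-P_m)$, $C=\tfrac12(P_m-P_l)$, and hence $A=\tfrac12(P_n-P_l)$. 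Since $P_n-P_l=-\bigl[(-1)^l\binom{w}{l}-(-1)^n\binom{w}{n}\bigr]\gb_w$, this is exactly the asserted value of $-\zeta_{\calA_1}(l,m,n)$.

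Finally, the star value follows from the merge relation $\zeta^\star_{\calA_1}(l,m,n)=\zeta_{\calA_1}(l,m,n)+\zeta_{\calA_1}(l+m,n)+\zeta_{\calA_1}(l,m+n)+\zeta_{\calA_1}(w)$; the last term vanishes and the middle two equal $-P_n$ and $P_l$, so $\zeta^\star_{\calA_1}(l,m,n)=A+P_l-P_n=\tfrac12(P_l-P_n)=-\zeta_{\calA_1}(l,m,n)$, completing both equalities. The genuine care in this argument is the parity bookkeeping: the signs from $w$ odd must be tracked correctly in the reversal and depth-two formulas. The one structural subtlety worth flagging is that the stuffle relations by themselves leave the six permutation values underdetermined; it is precisely the reversal symmetry (which becomes \emph{antisymmetry} in odd weight) that collapses them to three unknowns and renders the system nonsingular.
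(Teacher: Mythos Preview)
Your argument is correct. The stuffle expansions, the reversal signs (using that $w$ is odd), the depth-two evaluations via \eqref{equ:depth2FMZV}, and the final linear algebra all check out; in particular the three equations
\[
A+B-C=P_n-P_m,\qquad A+B+C=P_n-P_l,\qquad A-B+C=P_m-P_l
\]
solve to $A=\tfrac12(P_n-P_l)$ as you claim, and the star computation is fine.

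As for comparison: the paper does not actually give its own proof of Theorem~\ref{thm:FMZVdepth3superbity1}; it simply quotes the result from \cite[Theorem~3.5]{Zhao2008a} and \cite[Theorem~6.2]{Hoffman2004}. Your approach---combine the reversal antisymmetry in odd weight with three stuffle identities of the form $\zeta_{\calA_1}(a)\zeta_{\calA_1}(b,c)=0$ to obtain a nonsingular $3\times3$ system---is essentially the method used in those references (Hoffman's version is phrased via the quasi-symmetric function/Hopf-algebraic machinery, but unwinds to the same linear system). So there is no genuinely different route here; you have reconstructed the standard proof.
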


\section{Double shuffle relations of finite Euler sums}\label{sec:FMZVdbsf}

\subsection{Stuffle relations of finite Euler sums}\label{sec:FMZVstuffle}
There are many $\Q$-linear relations among Euler sums. One of the most
important tools to study these is so-called (regularized)
double shuffle relations. However, in the finite setting, the
shuffle structure is not easily seen due to the lack of integral
expressions, although the stuffle is obvious. For instance,
for any positive integer $n$, we have
\begin{equation*}
H_n(s)H_n(t)=H_n(s,t)+H_n(t,s)+H_n(s\oplus t), \quad \forall s,t\in\db,
\end{equation*}

By extending an idea of Hoffman \cite{Hoffman2005} Racinet studied
the cyclotomic analogs of MZVs of level $N$ in \cite{Racinet2002}
using algebras of words. At level two, these analogs are
exactly the Euler sums considered in \cite{Zhao2010a}.

\begin{defn} \label{defn:fAlevelN}
Let the level $N$ be a positive integer ($N=1$ or 2 in this paper).
Let $\gG_N$ be the set of $N$th roots of unity.
The set of alphabet $\setX=\setX_N$ consists of $N+1$ letters $\x_\xi$
for $\xi\in\{0\}\cup \gG_N$. Let $X^*$ be the set of
words over $\setX$ (\emph{i.e.}, monomials in the letters in $\setX$)
including the \emph{empty word} $\myone$. The \emph{weight}
of a word $\bfw$ is the number of letters contained in $\bfw$ and its \emph{depth}
is the number of $\x_\xi$'s ($\xi\in\gG_N$) contained in $\bfw$.
Define the \emph{Hoffman-Racinet algebra} of level $N$, denoted by $\fA_N$,
to be the (weight) graded noncommutative polynomial $\Q$-algebra
generated by $X^*$.
Let $\fA_N^0$ be the subalgebra of $\fA_N$ generated by words not
beginning with $x_1$ and not ending with $x_0$. The words in $\fA_N^0$
are called \emph{admissible words.}

A shuffle product, denoted by $\sha$, is defined on $\fA_N$ as follows:
$\myone \sha\bfw=\bfw\sha\myone=\bfw$ for all
$\bfw\in \fA_N$, and
\begin{equation*}
x \bfu \sha y \bfv=
x(\bfu \sha y\bfv)
+y (x \bfu \sha \bfv),
\end{equation*}
for all $x,y\in\setX$ and $\bfu,\bfv\in\setX^*$. Then $\sha$ is
extended $\Q$-linearly over $\fA_N$.
\end{defn}

Let $\fA_N^1$ be the subalgebra of $\fA_N$ generated by
those words not ending with $\x_0$. For every $n\in\N$,
we define the weight $n$ element
\begin{equation*}
\y_{n,\xi}:=\x_0^{n-1}\x_\xi, \quad \xi\in\gG_N.
\end{equation*}
Then $\fA_N^1$ is clearly generated over the
alphabet $\setY_N:=\{\y_{n,\mu}: \  n\in\N,\ \mu\in\gG_N\}$.
Let $\setY_N^\ast$ be the set of words over $\setY_N$.
We now define a stuffle product $\ast$ on $\fA_N^1$
as follows: $\myone\ast\bfw=\bfw\ast\myone=\bfw$ for all
$\bfw\in\setY_N^\ast$, and
\begin{equation}\label{equ:worFormStuff}
\y_{m,\mu} \bfu \ast \y_{n,\nu} \bfv=
\y_{m,\mu}(\bfu \ast \y_{n,\nu}\bfv)
+\y_{n,\nu} (\y_{m,\mu} \bfu \ast \bfv)
+\y_{m+n,\mu\nu}(\bfu \ast \bfv),
\end{equation}
for all $m,n\in\N, \mu,\nu\in\gG_N$ and $\bfu,\bfv\in\setY_N^\ast$.
Then $\ast$ is extended $\Q$-linearly over $\fA_N^1$.

For convenience, we define
$W:\bigcup_{d\ge1}\db^d\to\setY_2^\ast$,
$W(\bfs)=\y_{|s_1|,\sgn(s_1)} \dots \y_{|s_d|,\sgn(s_d)}$
for all $\bfs=(s_1,\dots,s_d)\in \db^d.$
Clearly the map $W$ is a bijection.

\begin{defn}
Define $\zeta_{\ast}:\fA_2^0\to\R$ as follows:
for any admissible word $\bfw=W(\bfs)\in\fA_2^0$ where $\bfs\in\db^d$
we set $\zeta_{\ast}(\bfw):=\zeta(\bfs)$.
Then we extend it $\Q$-linearly to $\fA_2^0$.
\end{defn}

\begin{defn}
Define $\zeta_{\calA_\ell,\ast}:\fA_2^1\to\calA_\ell$ as follows:
for any word $\bfw=W(\bfs)\in\fA_2^1$ where $\bfs\in\db^d$
we set $\zeta_{\calA_\ell,\ast}(\bfw):=\zeta_{\calA_\ell}(\bfs)$.
Then we extend it $\Q$-linearly to $\fA_2^1$.
\end{defn}

\begin{prop}\label{prop:zetaStuffle}
The map $\zeta_{\ast}:(\fA_2^0,\ast)\to\R$
is an algebra homomorphism. So is the map
$\zeta_{\calA_\ell,\ast}: (\fA_2^1,\ast)\to\calA_\ell$.
\end{prop}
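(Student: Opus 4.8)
The plan is to reduce both assertions to a single combinatorial identity at the level of the finite partial sums and then either pass to the limit or reduce modulo $p^\ell$. Extend $H_n$ $\Q$-linearly to $\fA_2^1$ by setting $H_n(\bfw):=H_n(\bfs)$ whenever $\bfw=W(\bfs)$; this is well defined since $W$ is a bijection. The heart of the proof is the claim that
\[
H_n(\bfu\ast\bfv)=H_n(\bfu)\,H_n(\bfv)\qquad\text{for all }\bfu,\bfv\in\setY_2^\ast\text{ and all }n\ge0.
\]
Granting this, both homomorphism statements follow formally, as explained below.

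I would prove the claim by induction on the total depth $\dep(\bfu)+\dep(\bfv)$. The base case, where one word is the empty word $\myone$, is immediate from $H_n(\myone)=1$ and $\myone\ast\bfv=\bfv$. For the inductive step, write $\bfu=\y_{m,\mu}\bfu'$ and $\bfv=\y_{n,\nu}\bfv'$ and use the first-letter recursion
\[
H_N(\y_{m,\mu}\bfu')=\sum_{k=1}^N\frac{\mu^k}{k^m}\,H_{k-1}(\bfu'),
\]
which simply records that the outermost summation index equals $k$. Multiplying the expansions of $H_N(\bfu)$ and $H_N(\bfv)$ gives a double sum over a pair of outer indices $(k,l)$, and the main work is to split it into the three regions $k>l$, $k<l$, $k=l$. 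In the region $k>l$ the inner $l$-sum reassembles $H_{k-1}(\bfv)$, so by the induction hypothesis $H_{k-1}(\bfu')H_{k-1}(\bfv)=H_{k-1}(\bfu'\ast\bfv)$ and this region contributes $H_N(\y_{m,\mu}(\bfu'\ast\bfv))$; symmetrically $k<l$ contributes $H_N(\y_{n,\nu}(\bfu\ast\bfv'))$; and the diagonal $k=l$ uses $\mu^k\nu^k=(\mu\nu)^k$ and $k^{-m}k^{-n}=k^{-(m+n)}$ to contribute $H_N(\y_{m+n,\mu\nu}(\bfu'\ast\bfv'))$. Adding the three regions and comparing with the defining recursion \eqref{equ:worFormStuff} of $\ast$ gives exactly $H_N(\bfu\ast\bfv)$. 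This three-way splitting, together with the bookkeeping that the diagonal carries the correct sign and exponent because $|s_i\oplus t_j|=|s_i|+|t_j|$ and $\sgn(s_i\oplus t_j)=\sgn(s_i)\sgn(t_j)$, is the one genuinely substantive step; everything else is routine.

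For $\zeta_{\calA_\ell,\ast}$ I would observe that $\zeta_{\calA_\ell}(\bfs)$ is precisely the class of $\big(H_{p-1}(\bfs)\bmod p^\ell\big)_p$ in $\calA_\ell$: since every summation index is $<p$, each denominator is prime to $p$ and the reduction is legitimate. Because reduction modulo $p^\ell$ is a ring homomorphism on $p$-integral rationals and multiplication in $\calA_\ell$ is componentwise, applying the claim with $n=p-1$ for every $p$ and reducing yields $\zeta_{\calA_\ell}(\bfu\ast\bfv)=\zeta_{\calA_\ell}(\bfu)\,\zeta_{\calA_\ell}(\bfv)$. Since $\ast$ sends $\setY_2$-words to $\setY_2$-words it preserves $\fA_2^1$, and as $\zeta_{\calA_\ell,\ast}$ agrees with $\zeta_{\calA_\ell}$ on all of $\fA_2^1$ (no convergence is needed for these finite sums), this is the desired homomorphism property.

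For $\zeta_{\ast}$ the one extra point is that $\ast$ preserves admissibility, so that $\fA_2^0$ is closed under $\ast$: the first letter of any term of $\bfu\ast\bfv$ is the first letter of $\bfu$, the first letter of $\bfv$, or the merged letter $\y_{m+n,\mu\nu}$ with $m+n\ge2$; the first two are admissible by hypothesis and the third can never equal $\y_{1,1}$. For admissible words the defining series of $\zeta$ converges, so letting $N\to\infty$ in the claim (a finite $\Q$-linear combination, so the limit passes through term by term) gives $\zeta(\bfu\ast\bfv)=\zeta(\bfu)\zeta(\bfv)$, i.e.\ $\zeta_\ast$ is an algebra homomorphism on $(\fA_2^0,\ast)$. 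The hard part thus lies entirely in the inductive splitting of the second paragraph; the remaining obstacles are only the admissibility-preservation just noted and the observation that convergence of the partial sums coincides with the admissibility condition $(|s_1|,\sgn(s_1))\ne(1,1)$.
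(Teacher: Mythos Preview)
Your proof is correct and follows essentially the same approach as the paper: induction using the stuffle recursion \eqref{equ:worFormStuff}. The paper inducts on the total weight $|\bfu|+|\bfv|$ and leaves all details to the reader, whereas you induct on total depth and carry out the three-region splitting for the partial sums $H_N$ explicitly before passing to the limit or reducing modulo $p^\ell$; these are the same argument, with your version simply supplying the details the paper omits.
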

\begin{proof}
By induction on $|\bfu|+|\bfv|$ we can prove easily that
\begin{alignat*}{2}
\zeta_{\ast}(\bfu*\bfv)=&\, \zeta_{\ast}(\bfu)\zeta_{\ast}(\bfv) & \quad&
\forall \bfu,\bfv\in\fA_2^0, \\
 \zeta_{\calA_\ell,\ast}(\bfu*\bfv)=&\, \zeta_{\calA_\ell,\ast}(\bfu)\zeta_{\calA_\ell,\ast}(\bfv) & \quad&
\forall \bfu,\bfv\in\fA_2^1.
\end{alignat*}
We leave the details to the interested reader.
\end{proof}

\begin{defn}\label{defn:FESstuffle}
To find as many $\Q$-linear relations as possible in weight $w$
we may choose all the known relations in weight $k<w$, multiply
them by $\zeta_{\calA_\ell}(\bfs)$
for all $\bfs$ of weight $w-k$,  and then expand all the products
using the stuffle relation \eqref{equ:worFormStuff}. All the $\Q$-linear
relations among FESs of the same weight produced in this way are
called \emph{linear stuffle relations of FESs}.
\end{defn}

\begin{eg}\label{eg:stuffle}
By \eqref{equ:depth2FMZV} and \eqref{equ:depth2FES} we have
\begin{equation*}
     \zeta_{\calA_1}(2,1)=4\zeta_{\calA_1}(2,\bar1)=3\gb_3.
\end{equation*}
Multiplying $\zeta_{\calA_1}(\bar1)$ on both sides we get a
linear stuffle relation of FESs of weight 4:
\begin{multline}
\label{equ:missingWt4}
 \zeta_{\calA_1}(\bar1,2,1)+\zeta_{\calA_1}(\bar3,1)
+\zeta_{\calA_1}(2,\bar1,1)
+\zeta_{\calA_1}(2,1,\bar1)+\zeta_{\calA_1}(2,\bar2)\\
= 8\zeta_{\calA_1}(2,\bar1,\bar1)+4\zeta_{\calA_1}(\bar3,\bar1)
+4\zeta_{\calA_1}(\bar1,2,\bar1).
\end{multline}
\end{eg}

\subsection{Shuffle relations of FESs}\label{sec:FESshuffle}
It is not hard to see that Euler sums can be expressed by iterated integrals.
Suppose $s_j\in\db$ and $\sgn(s_j)=\mu_j$ for all $j=1,\dots,d$. Then
\begin{equation}\label{equ:EulerSumIteIntegral}
\zeta(s_1,\dots,s_n)=\int_0^1 \left(\frac{\ud t}{t}\right)^{|s_1|-1}\left(\frac{\ud t}{\xi_1-t}\right)\dots
\left(\frac{\ud t}{t}\right)^{|s_d|-1}\left(\frac{\ud t}{\xi_d-t}\right),
\end{equation}
where $\xi_i=\prod_{j=1}^i \mu_i$, $i=1,\dots,d$. We thus define
$\bfp,\bfp^{-1}:\fA_2^1 \to\fA_2^1$ by
\begin{equation*}
   \bfp(\y_{s_1,\xi_1}\cdots \y_{s_d,\xi_d})
   :=\y_{s_1,\mu_1}\cdots \y_{s_d,\mu_d},
\end{equation*}
where $\mu_i=\prod_{j=1}^i \xi_j$ and
\begin{equation*}
   \bfp^{-1}(\y_{s_1,\mu_1}\cdots \y_{s_d,\mu_d})
   :=\y_{s_1,\xi_1}\cdots \y_{s_d,\xi_d},
\end{equation*}
where $\xi_j=\mu_{j-1}^{-1}\mu_j$ (setting $\mu_0=1$).

For all $s_1,\dots,s_d\in\db$, we now define the
one-variable multiple polylog
\begin{equation*}
\singL_{s_1,\dots,s_d}(z):=\sum_{k_1>k_2>\ldots>k_d\ge 1}z^{k_1} \prod_{j=1}^d
\frac{\sgn (s_j)^{k_j}}{k_j^{|s_j|}}, \quad |z|<1.
\end{equation*}
Then it is easy to extend \eqref{equ:EulerSumIteIntegral} to these functions.
\begin{lem} \label{lem:RacProp228} \emph{(\cite[Proposition~2.2.8]{Racinet2002})}
For all $s_1,\dots,s_d\in\db$, we have
\begin{equation*}
\singL_{s_1,\dots,s_d}(z)=\int_0^z \left(\frac{\ud t}{t}\right)^{|s_1|-1}\left(\frac{\ud t}{\xi_1-t}\right)\dots \left(\frac{\ud t}{t}\right)^{|s_d|-1}\left(\frac{\ud t}{\xi_d-t}\right),
\end{equation*}
where $\xi_i= \sgn(s_{i-1})\sgn(s_i)$, $i=1,\dots,d$. Here we have set $\sgn(s_{0})=1$.
\end{lem}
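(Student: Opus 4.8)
The plan is to prove the identity by induction on the weight $w=|s_1|+\dots+|s_d|$, comparing the two sides through their $z$-derivatives together with their common value at $z=0$. On the disk $|z|<1$ the defining series converges absolutely and locally uniformly, while the straight segment $[0,z]$ stays in the open unit disk and so avoids the poles $\pm1$ of the integrand; hence both sides are analytic there. For $\dep(\bfs)\ge1$ both sides vanish at $z=0$ (the series has lowest power $z^{k_1}$ with $k_1\ge1$, and an empty-range integral is $0$), so it suffices to match derivatives. Using the convention that the leftmost one-form is outermost — the convention under which the classical $\singL_{n}(z)=\int_0^z(\ud t/t)^{n-1}(\ud t/(1-t))$ holds — the iterated integral $I(z)=\int_0^z\omega_1\cdots\omega_w$ with $\omega_1=f_1(t)\,\ud t$ satisfies $I'(z)=f_1(z)\int_0^z\omega_2\cdots\omega_w$. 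Thus the whole argument reduces to differentiating $\singL_{s_1,\dots,s_d}(z)$ once and recognizing the outcome as the integral with its first form stripped off.

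Differentiating the series term by term splits into two cases. If $|s_1|\ge2$, then $\frac{\ud}{\ud z}\singL_{s_1,\dots,s_d}(z)=\frac1z\,\singL_{s_1',s_2,\dots,s_d}(z)$, where $s_1'$ lowers $|s_1|$ by one and keeps the sign $\mu_1=\sgn(s_1)$; this matches the leading form $\ud t/t$ at the unchanged argument $z$, so the induction hypothesis applies verbatim. If $|s_1|=1$, reindexing $m=k_1-1\ge k_2$ and summing the geometric series in $m$ (legitimate since $|\mu_1 z|<1$) gives
\begin{equation*}
\frac{\ud}{\ud z}\singL_{s_1,\dots,s_d}(z)=\frac{\mu_1}{1-\mu_1 z}\,\singL_{s_2,\dots,s_d}(\mu_1 z)=\frac{1}{\xi_1-z}\,\singL_{s_2,\dots,s_d}(\mu_1 z),
\end{equation*}
where the last equality uses $\mu_1^2=1$ and $\xi_1=\sgn(s_0)\sgn(s_1)=\mu_1$. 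This matches the leading form $\ud t/(\xi_1-t)$.

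The one genuine subtlety — which I expect to be the main obstacle — is the rescaled argument $\mu_1 z$ produced in the depth-reduction step: the tail polylog appears evaluated at $\mu_1 z$ rather than at $z$. I would resolve this by the change of variables $t\mapsto\mu_1 t$ in the tail integral. Since $\mu_1=\pm1$, this substitution fixes each form $\ud t/t$, carries the upper limit $\mu_1 z$ back to $z$, and multiplies every pole by $\mu_1$; consequently the pole $\prod_{l=2}^{i}\sgn(s_l)$ attached to the $i$-th factor of the tail tuple $(s_2,\dots,s_d)$ becomes $\mu_1\prod_{l=2}^{i}\sgn(s_l)=\prod_{l=1}^{i}\sgn(s_l)$. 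Iterated through the induction, this is precisely what forces the pole of the $i$-th factor of the full integral to be the running product $\prod_{l=1}^{i}\sgn(s_l)$, in agreement with \eqref{equ:EulerSumIteIntegral}.

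Finally, the base cases anchor the recursion: the empty word gives $\singL_\emptyset(z)=1$ against the empty integral $1$, and the depth-one, weight-one check $\singL_{s_1}(z)=-\log(1-\mu_1 z)=\int_0^z \ud t/(\xi_1-t)$ with $\xi_1=\mu_1$ starts the induction. Assembling the two derivative computations with the integral recursion $I'(z)=f_1(z)\int_0^z\omega_2\cdots\omega_w$ and the change of variables $t\mapsto\mu_1 t$, and invoking uniqueness of analytic functions agreeing at $z=0$ with equal derivatives, completes the proof.
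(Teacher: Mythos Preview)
The paper does not supply its own proof of this lemma; it is quoted from Racinet with only a citation, so there is no in-paper argument to compare against. Your induction on the weight via matching $z$-derivatives, together with the substitution $t\mapsto\mu_1 t$ to absorb the rescaled argument $\mu_1 z$ in the depth-reduction step, is the standard route and is carried out correctly in both cases $|s_1|\ge2$ and $|s_1|=1$.

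One point is worth flagging. What your argument actually establishes is that the pole in the $i$-th block is the \emph{running product} $\xi_i=\prod_{j\le i}\sgn(s_j)$, in agreement with \eqref{equ:EulerSumIteIntegral}. The lemma as printed, however, writes $\xi_i=\sgn(s_{i-1})\sgn(s_i)$, which does not in general agree with the running product: for $\bfs=(\bar1,\bar1,\bar1)$ the printed formula gives $\xi_3=+1$, whereas the running product gives $\xi_3=-1$, and a direct power-series check shows the latter is what matches $\singL_{\bar1,\bar1,\bar1}(z)$. So the discrepancy is a typo in the stated formula for $\xi_i$ (it should read $\xi_i=\xi_{i-1}\sgn(s_i)$, i.e.\ the running product), not a gap in your reasoning; you were right to anchor your induction to \eqref{equ:EulerSumIteIntegral}.
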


Define the map $\singL(-;z):(\fA_2^1,\ast)\to\R[\![z]\!]$ by setting
$\singL(\bfw;z)=\singL_\bfs(z)$ for all $\bfw=W(\bfs)$. Then
define the map $\singL_\sha(-;z):(\fA_2^1,\sha)\to\R[\![z]\!]$ by setting
$\singL_\sha(\bfw;z)=\singL(\bfp(\bfw);z)$.

\begin{prop}\label{prop:SVMPLStuffle}
The map $\singL_\sha(-;z):(\fA_2^1,\sha)\to\R[\![z]\!]$ is an algebra homomorphism.
\end{prop}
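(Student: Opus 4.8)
The plan is to reduce the shuffle-algebra homomorphism property to the fact that iterated integrals multiply according to the shuffle rule. By Lemma~\ref{lem:RacProp228}, each generating-series value $\singL_\bfs(z)$ equals an iterated integral from $0$ to $z$ of a word in the two differential forms $\ud t/t$ and $\ud t/(\xi-t)$, where the choice of $\xi$ at each step is dictated by the \emph{running product} of signs. The map $\bfp$ is precisely the passage from the ``sign'' encoding $\mu_i$ to the ``running-product'' encoding $\xi_i$; so $\singL(\bfp(\bfw);z)$ is literally the iterated integral whose integrand word is read off, letter by letter, from $\bfw$ in the alphabet $\setX_2=\{\x_0,\x_1,\x_{-1}\}$. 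Thus the first step is to make this dictionary explicit: I would check that under $\bfp$, a word $\bfw\in\setY_2^\ast$ of weight $n$ corresponds to a monomial in $\x_0,\x_{\pm1}$ of length $n$, and that $\singL_\sha(\bfw;z)$ is exactly the iterated integral $\int_0^z$ of that monomial (with each $\x_0$ giving $\ud t/t$ and each $\x_{\pm1}$ giving $\ud t/(\pm1-t)$).

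The second and central step is the classical shuffle identity for iterated integrals. Writing $I(w_1\cdots w_n;z)=\int_0^z \omega_{w_1}\cdots\omega_{w_n}$ for the iterated integral associated to a word $w_1\cdots w_n$ over $\setX_2$ (with $\omega_{\x_0}=\ud t/t$ and $\omega_{\x_\xi}=\ud t/(\xi-t)$), one has
\begin{equation*}
I(\bfu;z)\,I(\bfv;z)=I(\bfu\sha\bfv;z)
\end{equation*}
for all words $\bfu,\bfv$ over $\setX_2$. This is a standard consequence of Fubini applied to the product of two simplex integrals: the product domain decomposes into simplices indexed exactly by the interleavings of the two increasing chains of integration variables, which is the combinatorial content of the shuffle product $\sha$. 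I would prove it by induction on $|\bfu|+|\bfv|$, splitting off the innermost (largest-variable) integration to produce the recursion $x\bfu'\sha y\bfv'=x(\bfu'\sha y\bfv')+y(x\bfu'\sha\bfv')$, which matches the defining recursion of $\sha$ in Definition~\ref{defn:fAlevelN}.

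The last step is to transport this identity through $\bfp$. Because $\singL_\sha(\bfw;z)=I(\bfp(\bfw);z)$ and $\bfp:\fA_2^1\to\fA_2^1$ is a bijection on words (its inverse $\bfp^{-1}$ is given explicitly in the text), I would define a shuffle product on the $\y$-alphabet side by transporting $\sha$ via $\bfp$, and then observe that $\bfp$ is in fact \emph{already} a coalgebra/shuffle-compatible map: the claim $\singL_\sha(\bfu\sha\bfv;z)=\singL_\sha(\bfu;z)\,\singL_\sha(\bfv;z)$ becomes $I(\bfp(\bfu\sha\bfv);z)=I(\bfp(\bfu);z)\,I(\bfp(\bfv);z)$, so it suffices that $\bfp(\bfu\sha\bfv)=\bfp(\bfu)\sha\bfp(\bfv)$. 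I expect the main obstacle to lie exactly here: verifying that the reindexing $\bfp$ intertwines the two shuffle products. The subtlety is that $\sha$ on the $\y$-side and on the $\x$-side must agree under $\bfp$, and since $\bfp$ acts on the sign labels by a running-product transformation that depends on the \emph{position} of each letter, one must confirm that interleaving two words and then taking running products gives the same labels as taking running products first and then interleaving. This is not automatic and requires a careful bookkeeping argument on how the partial products $\mu_i=\prod_{j\le i}\xi_j$ behave under shuffling; once this compatibility is established, the homomorphism property follows immediately by $\Q$-linear extension.
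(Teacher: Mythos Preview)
Your Steps~1 and~2 are exactly right and are, in fact, the whole proof: once you have established (via Lemma~\ref{lem:RacProp228} and the definition of $\bfp$) that $\singL_\sha(\bfw;z)$ is the iterated integral whose integrand word is $\bfw$ itself read letter by letter in the alphabet $\setX_2=\{\x_0,\x_1,\x_{-1}\}$, the shuffle identity for iterated integrals finishes immediately, since $\sha$ in Definition~\ref{defn:fAlevelN} is defined on that very alphabet. This is precisely the paper's one-line argument.

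Your Step~3, however, is both unnecessary and incorrect. In Step~1 you correctly state that $\singL_\sha(\bfw;z)$ is the iterated integral of $\bfw$; in Step~3 you switch to writing $\singL_\sha(\bfw;z)=I(\bfp(\bfw);z)$ and then try to show $\bfp(\bfu\sha\bfv)=\bfp(\bfu)\sha\bfp(\bfv)$. The first equality should read $I(\bfw;z)$, not $I(\bfp(\bfw);z)$: the whole point of composing with $\bfp$ is that it cancels the running-product reindexing in Lemma~\ref{lem:RacProp228}, so that the integrand is read directly from $\bfw$. And the shuffle-compatibility of $\bfp$ you propose to verify is actually false. Take $\bfu=\bfv=\x_{-1}=\y_{1,-1}$; then $\bfu\sha\bfv=2\,\y_{1,-1}\y_{1,-1}$ and $\bfp(\bfu\sha\bfv)=2\,\y_{1,-1}\y_{1,1}$, whereas $\bfp(\bfu)\sha\bfp(\bfv)=2\,\y_{1,-1}\y_{1,-1}$. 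So the ``careful bookkeeping argument'' you anticipate cannot succeed; fortunately it is not needed.
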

\begin{proof}
This follows from Lemma~\ref{lem:RacProp228} and the shuffle relation
satisfied by the iterated integrals.
\end{proof}

When $z\to 1$ we obtain the algebra homomorphism
$\zeta_\sha:(\fA_2^0,\sha)\to\R$ with
$\zeta_\sha(\bfw)=\zeta_\ast(\bfp(\bfw))$.
One of the main results of \cite{Racinet2002} is the following theorem.
\begin{thm}\label{thm:2wayReg} \emph{(\cite[Proposition~2.4.14]{Racinet2002})}
Set $\zeta_\sha(\x_1)=\zeta_\ast(\x_1)=T$. Then
\begin{itemize}
  \item[\upshape{(i)}] $\zeta_\ast$ can be extended to an
algebra homomorphism $\zeta_\ast: (\fA_2^1,\ast)\to\R[T]$;

  \item[\upshape{(ii)}] $\zeta_\sha$ can be extended to an
algebra homomorphism $\zeta_\sha: (\fA_2^1,\sha)\to\R[T]$.
\end{itemize}
Moreover,
\begin{equation*}
     \zeta_\sha=\rho\circ\zeta_\ast\circ\bfp
\end{equation*}
where $\rho:\R[t]\to\R[t]$ is an $\R$-linear map such that
\begin{equation*}
 \rho(e^{Tu})=\exp\left(\sum_{n=2}^\infty
 \frac{(-1)^n}{n}\zeta(n)u^n\right)e^{Tu},\qquad |u|<1.
\end{equation*}
\end{thm}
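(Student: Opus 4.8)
The plan is to deduce everything from one structural fact: that $\fA_2^1$ is a polynomial algebra over the admissible subalgebra $\fA_2^0$ on the single generator $\x_1$, \emph{simultaneously} for the stuffle product $\ast$ and for the shuffle product $\sha$, i.e. $(\fA_2^1,\ast)=(\fA_2^0,\ast)[\x_1]$ and $(\fA_2^1,\sha)=(\fA_2^0,\sha)[\x_1]$ as (commutative) polynomial rings. Granting this, parts (i) and (ii) are immediate: $\zeta_\ast$ and $\zeta_\sha$ are already algebra homomorphisms on $\fA_2^0$ by Proposition~\ref{prop:zetaStuffle} and the $z\to1$ limit of Proposition~\ref{prop:SVMPLStuffle}, so the universal property of polynomial rings supplies unique extensions to algebra homomorphisms $\fA_2^1\to\R[T]$ sending $\x_1\mapsto T$.

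First I would establish the polynomial structure. Both products preserve $\fA_2^0$: a shuffle of two words not beginning with $\x_1$ has every term beginning with a letter $\ne\x_1$, and in a stuffle $\y_{m,\mu}\bfu\ast\y_{n,\nu}\bfv$ with $\y_{m,\mu},\y_{n,\nu}\ne\x_1=\y_{1,1}$ the three leading letters $\y_{m,\mu},\y_{n,\nu},\y_{m+n,\mu\nu}$ are all $\ne\x_1$ (the last because $m+n\ge2$). That $\x_1$ is a free generator I would prove by induction on the number of leading $\x_1$'s (and weight): each word can be rewritten, modulo $\bullet$-products by $\x_1$ and words with strictly fewer leading $\x_1$'s, as a $\Q$-combination of admissible words, giving existence of an expansion $\bfw=\sum_k\bfw_k\bullet\x_1^{\bullet k}$ with $\bfw_k\in\fA_2^0$; uniqueness follows by a leading-term/dimension count. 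This is the $\sha$- and $\ast$-analogue of the Radford/Hoffman freeness theorems, with $\x_1$ the unique divergent generator (note $\y_{1,-1}=\x_{-1}$ is admissible, since $\zeta(\bar1,\dots)$ converges).

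Next I would pin down $\rho$ on the one-parameter family $\tfrac{1}{1-\x_1u}=\sum_{n\ge0}\x_1^nu^n\in\fA_2^1[[u]]$, which $\bfp$ fixes since every letter is $\x_1$. Because $\x_1^{\sha n}=n!\,\x_1^n$ we have $\tfrac{1}{1-\x_1u}=\exp_{\sha}(\x_1u)$, while a Newton-type quasi-shuffle identity gives $\tfrac{1}{1-\x_1u}=\exp_{\ast}\!\big(\sum_{n\ge1}\tfrac{(-1)^{n-1}}{n}\y_{n,1}u^n\big)$, checked order by order from $\x_1\ast\x_1=2\x_1^2+\y_{2,1}$. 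Applying the shuffle homomorphism yields $\zeta_\sha\big(\tfrac{1}{1-\x_1u}\big)=e^{Tu}$; applying $\zeta_\ast$ and using $\zeta_\ast(\y_{n,1})=\zeta(n)$ for $n\ge2$ and $\zeta_\ast(\x_1)=T$ yields $\zeta_\ast\big(\tfrac{1}{1-\x_1u}\big)=e^{Tu}\exp\!\big(\sum_{n\ge2}\tfrac{(-1)^{n-1}}{n}\zeta(n)u^n\big)$. Since $\rho$ commutes with scalar series in $u$, matching $\zeta_\sha=\rho\circ\zeta_\ast\circ\bfp$ on this family forces $\rho(e^{Tu})=\exp\!\big(\sum_{n\ge2}\tfrac{(-1)^{n}}{n}\zeta(n)u^n\big)e^{Tu}$, exactly the stated $\rho$; the scalar factor is $e^{\gamma u}\Gamma(1+u)$, the asymptotic-matching constant between $\log\tfrac1{1-z}$ and the harmonic partial sums.

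It remains to upgrade from the $\x_1$-tower to all of $\fA_2^1$, i.e. to elements $\bfw_0\bullet\x_1^{\bullet n}$ with $\bfw_0\in\fA_2^0$, and this is the main obstacle. The difficulty is that $\bfp$ does \emph{not} intertwine $\sha$ and $\ast$, so one cannot simply push $\rho\circ\zeta_\ast\circ\bfp$ through a shuffle product; one must show the \emph{same} $\bfw$-independent factor $\rho$ corrects the divergence of every word at once. I would resolve this analytically, following Racinet, via the $z\to1$ behaviour of the polylogarithms $\singL_\bfs(z)$: each has an asymptotic expansion that is polynomial in $T=\log\tfrac1{1-z}$, and $\zeta_\sha$ and $\zeta_\ast\circ\bfp$ arise as the finite parts of the \emph{same} series $\singL(\bfp(\bfw);z)$ computed in the integral- versus series-regularization of $T$; these two conventions differ uniformly by the Gamma-factor above. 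Making ``uniformly'' precise — that the $T$-dependence factors through $e^{\gamma u}\Gamma(1+u)$ independently of the admissible prefix $\bfw_0$ — is the real content, obtained by combining Proposition~\ref{prop:SVMPLStuffle} with the regularized-limit computation and then invoking the polynomial structure to conclude $\zeta_\sha=\rho\circ\zeta_\ast\circ\bfp$ on all of $\fA_2^1$.
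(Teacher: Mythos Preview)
The paper does not give its own proof of this theorem: it is stated with attribution to Racinet~\cite[Proposition~2.4.14]{Racinet2002} and then used as a black box (the very next item is an example, with no intervening proof environment). So there is no in-paper argument to compare your proposal against.

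On its own merits your outline is the standard route (Ihara--Kaneko--Zagier at level~$1$, Racinet at level~$N$): the polynomial freeness $(\fA_2^1,\bullet)\cong(\fA_2^0,\bullet)[\x_1]$ for $\bullet\in\{\sha,\ast\}$ is the right structural input for (i) and (ii), and your exp--log computation on the $\x_1$-tower correctly pins down $\rho$. One point to tighten: the obstacle in the last paragraph is not really that ``$\bfp$ does not intertwine $\sha$ and $\ast$''. What you actually need is that the two polynomial decompositions of the \emph{same} word---$\bfw=\sum_k \bfw_k^{\sha}\sha\x_1^{\sha k}$ versus $\bfp(\bfw)=\sum_k \bfv_k^{\ast}\ast\x_1^{\ast k}$---yield $T$-polynomials that differ by a factor depending only on the $\x_1$-part, and this is exactly what the analytic comparison of the integral and series regularizations of $\singL(\bfp(\bfw);z)$ as $z\to1^-$ supplies. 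Your sketch names that resolution but does not carry it out; filling it in (the uniform asymptotic expansion and the identification of the discrepancy with $e^{\gamma u}\Gamma(1+u)$) is the substantive step, and is precisely what Racinet's proof does.
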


\begin{eg}
Suppose $\bfs=(\bar2,1)$. Then $W(\bfs)=\y_{2,-1}\y_{1,1}$ and
\begin{equation*}
  \singL(\y_{2,-1}\y_{1,1};z)=\int_0^1 \frac{\ud t}{t}
    \left(\frac{-d t}{1+t}\right)^2= \singL_{\bar2,1}(z).
\end{equation*}
\end{eg}

Set $\zeta_{\calA_1,\sha}=\zeta_{\calA_1,\ast}\circ\bfp:\fA_2^1\to\calA_1$.
Although $\zeta_{\calA_1,\sha}$ is not an algebra homomorphism
from $(\fA_2^1,\sha)$ we will see in the next theorem that it does
provide a kind of shuffle relation. It is the FES analog of
\cite[Theorem~1.7]{Jarossay2014} for symmetrized Euler sums
(see \eqref{equ:SESsha} and \eqref{equ:SESast}).

Define $\gt:\setY_1^\ast\to \setY_1^\ast$ by
\begin{equation*}
\tau(\x_0^{s_1-1}\x_1 \cdots \x_0^{s_d-1}\x_1)=
(-1)^{s_1+\cdots+s_d} \x_0^{s_d-1} \x_1\cdots \x_0^{s_1-1} \x_1.
\end{equation*}

\begin{thm}\label{thm:FESshuffle}
For all words $\bfw,\bfu\in\setY_1^\ast$ and
$\bfv\in\setY_2^\ast$, we have
\begin{itemize}
 \item[\upshape{(i)}]
 $\zeta_{\calA_1,\sha}(\bfu \sha \bfv)=\zeta_{\calA_1,\sha}(\tau(\bfu)\bfv)$,

\item[\upshape{(ii)}]
 $\zeta_{\calA_1,\sha}( (\bfw\bfu) \sha \bfv)=\zeta_{\calA_1,\sha}(\bfu \sha\tau(\bfw)\bfv)$,

\item[\upshape{(iii)}] For all $s\in\N$,
 $\zeta_{\calA_1,\sha}( (\x_0^{s-1}\x_1 \bfu) \sha\bfv)
 =(-1)^s \zeta_{\calA_1,\sha}(\bfu \sha (\x_0^{s-1}\x_1\bfv)).$
\end{itemize}
\end{thm}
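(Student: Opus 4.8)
The plan is to prove the single-block move (iii) first and then deduce (ii) and (i) from it by a short formal induction. For (ii) I would induct on the depth of $\bfw\in\setY_1^\ast$: the case $\bfw=\myone$ is trivial, and if $\bfw=\y_{s,1}\bfw'$, then (iii), applied with its ``$\bfu$'' replaced by $\bfw'\bfu\in\setY_1^\ast$ and its ``$\bfv$'' kept as $\bfv\in\setY_2^\ast$, gives $\zeta_{\calA_1,\sha}((\y_{s,1}\bfw'\bfu)\sha\bfv)=(-1)^s\zeta_{\calA_1,\sha}((\bfw'\bfu)\sha(\y_{s,1}\bfv))$; since $\y_{s,1}\bfv\in\setY_2^\ast$, the inductive hypothesis rewrites the right side as $(-1)^s\zeta_{\calA_1,\sha}(\bfu\sha\tau(\bfw')\y_{s,1}\bfv)$. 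The relation $\tau(\y_{s,1}\bfw')=(-1)^s\tau(\bfw')\y_{s,1}$, immediate from the definition of $\tau$ as reversal times $(-1)^{\mathrm{weight}}$, together with $\Q$-linearity of $\zeta_{\calA_1,\sha}$ collapses this to $\zeta_{\calA_1,\sha}(\bfu\sha\tau(\bfw)\bfv)$, which is (ii). Then (i) is the special case $\bfu=\myone$ of (ii).

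The engine for (iii) is a bridge between $\zeta_{\calA_1,\sha}$ and the generating series of the truncated sums. Since $\singL_\bfs(z)/(1-z)=\sum_{n\ge0}H_n(\bfs)z^n$, the coefficient of $z^{p-1}$ is $H_{p-1}(\bfs)$, so writing $[z^{p-1}]$ for that coefficient and recalling $\singL_\sha(\bfw;z)=\singL(\bfp(\bfw);z)$ and $\zeta_{\calA_1,\sha}=\zeta_{\calA_1,\ast}\circ\bfp$, we get in $\calA_1$ the identity $\zeta_{\calA_1,\sha}(\bfw)=\big([z^{p-1}]\,\singL_\sha(\bfw;z)/(1-z)\big)_p$. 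I also record two elementary facts about $\bfp$: it fixes $\setY_1^\ast$ pointwise (prepending the sign $+1$ leaves every partial product unchanged), and $\bfp(\y_{s,1}\bfv)=\y_{s,1}\bfp(\bfv)$ for all $\bfv$. Hence, writing $\bfu=W(\bfa)$ with $\bfa\in\N^m$ and $\bfp(\bfv)=W(\bfb)$, we have $\singL_\sha(\y_{s,1}\bfu;z)=\singL_{(s,\bfa)}(z)$ and $\singL_\sha(\y_{s,1}\bfv;z)=\singL_{(s,\bfb)}(z)$, the leading argument $s$ carrying sign $+1$ in both.

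Now I would prove (iii) by evaluating both sides through the bridge and a reflection $k\mapsto p-k$. By Proposition~\ref{prop:SVMPLStuffle} the left side equals $\big([z^{p-1}]\,\singL_{(s,\bfa)}(z)\singL_\bfb(z)/(1-z)\big)_p$ and the right side equals $(-1)^s\big([z^{p-1}]\,\singL_\bfa(z)\singL_{(s,\bfb)}(z)/(1-z)\big)_p$. In each case, multiplying the two series and applying $[z^{p-1}]/(1-z)$ turns the coefficient into a double sum over two independent descending chains subject only to the additive constraint (top of the first factor)$\,+\,$(top of the second factor)$\,\le p-1$. In both expressions I reflect the chain carrying the level-$1$ arguments $\bfa$ via $k_i\mapsto p-k_i$; modulo $p$ the congruence $(p-k)^{-e}\equiv(-1)^e k^{-e}$ produces the sign $(-1)^{|\bfa|}$, and—crucially—the additive constraint becomes a strict order relation that splices the two chains into a single descending chain $p>\kappa_1>\dots>\kappa_{m+1+\dep(\bfb)}\ge1$. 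A direct check shows both sides become $(-1)^{s+|\bfa|}\zeta_{\calA_1}(a_m,\dots,a_1,s,\bfb)$: in the left side the block $s$ ends up at the bottom of the reflected $\bfa$-chain, in the right side it sits at the top of the $\bfb$-chain, but in either case it lands immediately above the $\bfb$-part of the merged chain and the overall signs coincide. This proves (iii).

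The main obstacle is this reflection step: one must recognize that the shuffle homomorphism $\singL_\sha$ presents each side as two decoupled chains constrained only by an additive inequality on their tops, and that the single substitution $k\mapsto p-k$ on the level-$1$ chain simultaneously (a) produces exactly the sign $(-1)^{|\bfa|}$ that combines with the prefactor $(-1)^s$ into the common $(-1)^{s+|\bfa|}$, and (b) converts the additive inequality into the strict order that fuses the two chains, so that both sides collapse to the \emph{same} finite Euler sum. Leaving the level-$2$ chain $\bfb$ untouched is what avoids any interaction between the reflection and the nontrivial signs $\sgn(b_j)$, and is the reason the argument runs cleanly at superbity $1$; bijectivity of $k\mapsto p-k$ on $\{1,\dots,p-1\}$ for odd $p$ makes the rewriting exact modulo $p$ for all but finitely many primes, i.e.\ an equality in $\calA_1$.
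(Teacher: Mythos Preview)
Your reduction of (i) and (ii) to (iii) matches the paper's, but your proof of (iii) is genuinely different. The paper proves (iii) via the telescoping word identity
\[
\x_1\Big((\x_0^{s-1}\x_1\bfu)\sha\bfv-(-1)^s\,\bfu\sha(\x_0^{s-1}\x_1\bfv)\Big)
=\sum_{i=0}^{s-1}(-1)^i(\x_0^{s-1-i}\x_1\bfu)\sha(\x_0^{i}\x_1\bfv),
\]
applies $\singL_\sha(-;z)$, and reads off the coefficient of $z^p$: the prefixed $\x_1$ contributes a factor $1/p$ on the left, while each summand on the right is a Cauchy product of $p$-integral coefficients, so the difference vanishes mod $p$. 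Your route instead divides by $1-z$, reads the $z^{p-1}$ coefficient to get $H_{p-1}$ directly, and then uses the reflection $k\mapsto p-k$ on the unsigned chain to show that \emph{both} sides equal the explicit value $(-1)^{s+|\bfa|}\zeta_{\calA_1}(\revs{\bfa},s,\bfb)$. This is more elementary (no word identity) and strictly more informative, since it produces a closed form rather than a mere equality. The trade-off is that your reflection genuinely needs $\bfu\in\setY_1^\ast$ (so no $(-1)^k$ factors are disturbed), whereas the paper's argument goes through unchanged for arbitrary $\bfu\in\setY_2^\ast$. One small expository point: when you say the reflection ``produces the sign $(-1)^{|\bfa|}$'', that is exact on the right-hand side, but on the left you are reflecting the full $(s,\bfa)$-chain, so the sign there is $(-1)^{s+|\bfa|}$ outright with no prefactor to absorb; your stated conclusion is correct either way.
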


\begin{proof}
Taking $\bfu=\emptyset$ and then $\bfw=\bfu$ we see that (ii) implies (i).
Decomposing $\bfw$ into strings of the type $\x_0^{s-1}\x_1$
we see that (iii) implies (ii). Now we only need to prove (iii).

For simplicity let $\a=\x_0$ and $\b=\x_1$ in the rest of this proof.
Let $s_j\in\db$ with $\sgn(s_j)=\eta_j$ for $j=1,\dots,d$.
Let $\bfu=W(\bfs)$ and
$\bfv=\bfp(\bfu)=\y_{t_1,\xi_1} \dots \y_{t_d,\xi_d}\in\db^d$.
Then clearly $\bfp(\b\bfu)=\b\bfv$.

For any prime $p>2$, the coefficient of
$z^p$ in $\singL_\sha(\b\bfu;z)$ is given by
\begin{equation*}
  {\rm Coeff}_{z^p}\big[\singL_\sha(\b\bfu;z)\big]=
  \frac{1}{p}\sum_{p>k_1>\cdots>k_d>0}
    \frac{\xi_1^{k_1}\cdots\xi_d^{k_d}}{k_1^{t_1}\cdots k_d^{t_d}}
  =\frac{1}{p} H_{p-1}(\bfp(\bfu)).
\end{equation*}
Observe that
\begin{equation*}
\b \Big( (\a^{s-1}\b \bfu) \sha\bfv -(-1)^s   \bfu \sha (\a^{s-1}\b \bfv)\Big)
=\sum_{i=0}^{s-1} (-1)^{i} (\a^{s-1-i}\b \bfu) \sha (\a^{i}\b \bfv).
\end{equation*}
By first applying $\singL_\sha(-;z)$ to the above and then extracting
the coefficients of $z^p$  from both sides we get
\begin{align*}
&\, \frac{1}{p}\Big( H_{p-1}\circ\bfp\big((\a^{s-1}\b \bfu) \sha\bfv\big)
    -(-1)^s  H_{p-1}\circ\bfp\big(\bfu \sha (\a^{s-1}\b \bfv)\big)\Big)\\
=&\,\sum_{i=0}^{s-1} (-1)^{i} {\rm Coeff}_{z^p}
    \big[\singL_\sha(\a^{s-1-i}\b \bfu;z)\singL_\sha(\a^{i}\b \bfv;z)\big] \\
=&\,\sum_{i=0}^{s-1} (-1)^{i}\sum_{j=1}^{p-1}
{\rm Coeff}_{z^j}\big[\singL_\sha(\a^{s-1-i}\b \bfu;z) \big]
{\rm Coeff}_{z^{p-j}}\big[\singL_\sha(\a^{i}\b \bfv;z)\big]
\end{align*}
by Proposition~\ref{prop:SVMPLStuffle}. Now the last sum
is $p$-integral since $p-j<p$ and $j<p$. Therefore we get
\begin{equation*}
H_{p-1}\circ\bfp((\a^{s-1}\b \bfu) \sha\bfv)\equiv
(-1)^s  H_{p-1}\circ\bfp(\bfu \sha (\a^{s-1}\b \bfv)) \pmod{p}
\end{equation*}
which completes the proof of (iii).
\end{proof}

\begin{defn}\label{defn:FESdbsf}
A relation produced by Theorem~\ref{thm:FESshuffle} is called a
\emph{linear shuffle relation} of FES. For each weight $w\ge 2$,
by the \emph{double shuffle relations} of FESs of weight $w$ we
mean all the linear shuffle relations of weight $w$
and all the linear stuffle relations of $w$ defined in
Definition~\ref{defn:FESstuffle}.

Restricting to FMZVs, we obtain the
\emph{linear shuffle relations} and \emph{double shuffle relations}
of FMZVs.
\end{defn}

\section{Reversal relations}\label{sec:FMZVrev}
 For any $\bfs=(s_1,\dots,s_d)\in\db^d$, denote its \emph{reversal} by
$\tla{\bfs}=(s_d,\dots,s_1)$
and set $\sgn(\bfs)= \prod_{j=1}^d \sgn(s_j)$.
The following results are called
\emph{reversal relations}.
\begin{thm} \label{thm:reversal} \emph{(\cite[(6)]{Zhao2011c})}
Let $\bfs=(s_1,\dots,s_d)\in\db^d$. Then
\begin{equation}\label{equ:reversal}
\zeta_{\calA_1}(\tla{\bfs})= (-1)^{|\bfs|} \sgn(\bfs)\zeta_{\calA_1}(\bfs),\qquad
\zeta^\star_{\calA_1}(\tla{\bfs})= (-1)^{|\bfs|} \sgn(\bfs)\zeta^\star_{\calA_1}(\bfs).
\end{equation}
\end{thm}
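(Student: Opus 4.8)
The plan is to prove the non-starred identity by the classical reflection $k_j\mapsto p-k_j$, carried out in the $p$-component of $\calA_1$ for each sufficiently large prime, and then to observe that the starred identity follows by the identical argument. Since an equality in $\calA_1=\prod_p(\Z/p\Z)\big/\bigoplus_p(\Z/p\Z)$ need only hold for all but finitely many $p$, I may restrict to odd primes $p>d$, for which the $p$-component of $\zeta_{\calA_1}(\bfs)$ is the honest congruence class modulo $p$ of $H_{p-1}(\bfs)=\sum_{p>k_1>\cdots>k_d\ge1}\prod_j \sgn(s_j)^{k_j}/k_j^{|s_j|}$, all denominators being invertible mod $p$.

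First I would substitute $k_j=p-m_j$ in this sum. Since $k\mapsto p-k$ is an order-reversing involution of $\{1,\dots,p-1\}$, the strict chain $p>k_1>\cdots>k_d\ge1$ is carried bijectively onto $p>m_d>\cdots>m_1\ge1$, with $m_i$ still attached to the argument $s_i$. Next I would track each factor modulo $p$: from $k_i\equiv-m_i$ I get $k_i^{-|s_i|}\equiv(-1)^{|s_i|}m_i^{-|s_i|}$, while the oddness of $p$ gives $\sgn(s_i)^{k_i}=\sgn(s_i)^{p-m_i}=\sgn(s_i)\,\sgn(s_i)^{m_i}$ (trivially when $\sgn(s_i)=1$, and when $\sgn(s_i)=-1$ because $(-1)^{p}=-1$). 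Pulling the index-independent factors out of the product yields the global constants $\prod_i(-1)^{|s_i|}=(-1)^{|\bfs|}$ and $\prod_i\sgn(s_i)=\sgn(\bfs)$.

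Then I would re-index by $l_j=m_{d+1-j}$, so the chain reads $p>l_1>\cdots>l_d\ge1$ and the surviving product becomes $\prod_j \sgn(s_{d+1-j})^{l_j}/l_j^{|s_{d+1-j}|}$, whose $j$-th factor is governed by $s_{d+1-j}$, the $j$-th component of $\tla{\bfs}$. Hence the transformed sum is exactly $H_{p-1}(\tla{\bfs})$, and I obtain $H_{p-1}(\bfs)\equiv(-1)^{|\bfs|}\sgn(\bfs)\,H_{p-1}(\tla{\bfs})\pmod p$, which is the claimed identity in $\calA_1$ once one notes $\big((-1)^{|\bfs|}\sgn(\bfs)\big)^2=1$. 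The starred case is handled line for line by the same substitution, the only change being that the chains carry $\ge$ in place of $>$, so $p>m_d\ge\cdots\ge m_1\ge1$ and the reflection again matches $\zeta^\star_{\calA_1}(\tla{\bfs})$.

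The argument is essentially bookkeeping; the one genuinely load-bearing point is the sign identity $\sgn(s_i)^{p-m_i}=\sgn(s_i)\,\sgn(s_i)^{m_i}$, which fails for $p=2$ and is precisely where the oddness of $p$ enters. Discarding $p=2$ (and the finitely many $p\le d$) is harmless in $\calA_1$, so I expect no real obstacle beyond keeping the index reversal and the two global sign factors $(-1)^{|\bfs|}$ and $\sgn(\bfs)$ straight.
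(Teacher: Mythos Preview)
Your argument is correct: the reflection $k_j\mapsto p-k_j$ is precisely the standard proof of these reversal relations, and you have tracked the two global factors $(-1)^{|\bfs|}$ and $\sgn(\bfs)$ accurately, including the point that oddness of $p$ is needed for $\sgn(s_i)^{p-m_i}=\sgn(s_i)\,\sgn(s_i)^{m_i}$. The paper does not supply its own proof of this theorem but merely cites \cite[(6)]{Zhao2011c}, where the same substitution is used; so your proposal matches the intended argument.
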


Theorem~\ref{thm:reversal} can be lifted to superbity two.
\begin{thm} \label{thm:reversal2} \emph{(\cite[Theorem~2.1]{Zhao2008a})}
Let $\bfs=(s_1,\dots,s_d)\in\db^d$. Let $\bfe_i=(0,\dots,0,1,0,\dots,0)$ where $1$
appears at the $i$th component. Then
\begin{align}\label{equ:reversal2}
(-1)^{|\bfs|} \sgn(\bfs)\zeta_{\calA_2}(\tla{\bfs})=\, & \zeta_{\calA_2}(\bfs)+
p \sum_{i=1}^d |s_i|\zeta_{\calA_2}(\bfs\oplus \bfe_i),\\
 (-1)^{|\bfs|} \sgn(\bfs)\zeta^\star_{\calA_2}(\tla{\bfs})=\, &\zeta^\star_{\calA_2}(\bfs)+
p \sum_{i=1}^d |s_i|\zeta^\star_{\calA_2}(\bfs\oplus \bfe_i),\label{equ:reversal2star}
\end{align}
where the binary operation $\oplus$ is carried out componentwise
by \eqref{equ:oplusDefn}.
\end{thm}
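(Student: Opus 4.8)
The plan is to obtain both identities directly from the finite partial sum that represents $\zeta_{\calA_2}(\bfs)$, using the same change of summation variables $k_i\mapsto p-k_i$ that underlies every reversal relation, but now retaining the first-order term in $p$ instead of discarding it. Concretely, in $\zeta_{\calA_2}(\bfs)=\bigl(H_{p-1}(\bfs)\bigr)_p$ I would substitute $l_i:=p-k_{d+1-i}$. Since $k\mapsto p-k$ reverses order and preserves the range $\{1,\dots,p-1\}$, the strict chain $p>k_1>\cdots>k_d\ge1$ is carried to $p>l_1>\cdots>l_d\ge1$; crucially for the star version, the non-strict chain $p>k_1\ge\cdots\ge k_d\ge1$ is likewise sent to $p>l_1\ge\cdots\ge l_d\ge1$, so one argument covers both $\zeta_{\calA_2}$ and $\zeta^\star_{\calA_2}$ at once.

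Next I would carry out the two elementary expansions that turn the reversed summand into the claimed right-hand side. For the signs, using that $p$ is odd I have $\sgn(s_j)^{p-l}=\sgn(s_j)^{p}\sgn(s_j)^{-l}=\sgn(s_j)\,\sgn(s_j)^{l}$, so each factor contributes one copy of $\sgn(s_j)$, and the product of these over $j$ is exactly $\sgn(\bfs)$. For the denominators, with $a:=|s_j|$ and $l$ the corresponding new variable, a binomial expansion gives $(p-l)^{-a}\equiv(-1)^a\bigl(l^{-a}+a\,p\,l^{-a-1}\bigr)\pmod{p^2}$, which is legitimate because $l\in\{1,\dots,p-1\}$ is a $p$-unit. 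Multiplying the factors and absorbing the product of the $(-1)^a$'s into $(-1)^{|\bfs|}$, the term carrying no factor of $p$ reassembles into $\sgn(\bfs)(-1)^{|\bfs|}H_{p-1}(\tla{\bfs})$, while the terms linear in $p$ form a sum over $i$ in which the exponent attached to $l_i$ is raised by one. Because that operation leaves each sign untouched, the linear term is precisely $\sgn(\bfs)(-1)^{|\bfs|}\,p\sum_{i}|(\tla{\bfs})_i|\,H_{p-1}(\tla{\bfs}\oplus\bfe_i)$, so I arrive at the single identity
\[
\zeta_{\calA_2}(\bfs)=\sgn(\bfs)(-1)^{|\bfs|}\Bigl[\zeta_{\calA_2}(\tla{\bfs})+p\sum_{i=1}^{d}|(\tla{\bfs})_i|\,\zeta_{\calA_2}(\tla{\bfs}\oplus\bfe_i)\Bigr].
\]

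Finally I would substitute $\tla{\bfs}$ for $\bfs$ (so that $\tla{\tla{\bfs}}=\bfs$, $|\tla{\bfs}|=|\bfs|$, $\sgn(\tla{\bfs})=\sgn(\bfs)$) and multiply through by the unit $\sgn(\bfs)(-1)^{|\bfs|}$; this yields \eqref{equ:reversal2} verbatim, and the identical computation with $\ge$-chains yields \eqref{equ:reversal2star}. Two bookkeeping points require care and constitute essentially the only difficulty. First, one must verify that $\sgn\bigl((\bfs\oplus\bfe_i)_j\bigr)=\sgn(s_j)$, i.e.\ that incrementing one absolute value does not flip a sign, so that the linear-in-$p$ term really matches the finite Euler sum $\zeta_{\calA_2}(\bfs\oplus\bfe_i)$ rather than a sign-twisted variant. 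Second, since the correction carries an explicit factor of $p$, only the reversed and shifted sums modulo $p$ are needed; hence the sole prime where $\sgn(s_j)^{p}=\sgn(s_j)$ fails, namely $p=2$, is harmless in $\calA_2$ (identities there need hold only for all but finitely many $p$), and one may freely read the correction as a superbity-one quantity. As a consistency check, reducing the identity modulo $p$ annihilates the correction and recovers Theorem~\ref{thm:reversal}, confirming that \eqref{equ:reversal2} is the expected superbity-two lift.
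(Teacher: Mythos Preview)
Your proof is correct and follows the natural approach: the substitution $k_i\mapsto p-k_{d+1-i}$ together with the binomial expansion $(p-l)^{-a}\equiv(-1)^a(l^{-a}+ap\,l^{-a-1})\pmod{p^2}$ and the sign computation $\sgn(s_j)^{p-l}=\sgn(s_j)\,\sgn(s_j)^l$ (for odd $p$) gives exactly the displayed identity, and your check that $\sgn\bigl((\bfs\oplus\bfe_i)_j\bigr)=\sgn(s_j)$ is the right point to verify so that the linear-in-$p$ correction is genuinely $\zeta_{\calA_2}(\bfs\oplus\bfe_i)$. The paper itself does not supply a proof here but merely cites \cite[Theorem~2.1]{Zhao2008a}; your argument is precisely the standard one used there, so there is nothing substantive to compare.
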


\section{Quasi-symmetric functions with signed powers}\label{sec:QSym}
To derive more relations between MHSs and AMHSs we turn to
the theory of quasi-symmetric functions. To treat AMHSs with signed indices
in $\db$ we have to allow the powers in these quasi-symmetric functions
to be signed numbers.
\begin{defn} \label{defn:CQSym}
We denote by $\Z[x_1,\dots, x_n;\db]$ the set of polynomials in $x_1,\dots, x_n$
with signed powers, namely,
\begin{equation*}
 \Z[x_1,\dots, x_n;\db]
:=\left\{\left.\sum_{e_1=\ol{d_1}}^{d_1}\cdots \sum_{e_n=\ol{d_n}}^{d_n} c_{e_1,\dots,e_n}
    x_1^{e_1} \cdots  x_n^{e_n}\right| d_1,\dots,d_n\in \N_0, c_{e_1,\dots,e_n}\in \Z\right\}.
\end{equation*}
Here, we set $\ol{0}=0$ and $\sum_{e=\ol{d}}^{d}$ means $e$ runs through
the set $\{\ol{d},\dots,\ol{1},0,1,\dots,d\}$.
Furthermore, $x_j^e x_j^{e'}=x_i^{e\oplus e'}$ for any $j\le n$
and $e,e'\in\db$. Also we set
\begin{equation*}
\deg(x_1^{e_1} \cdots  x_n^{e_n})=|e_1|+\dots+|e_n|.
\end{equation*}
\end{defn}

\begin{defn}\label{defn:QSym2}
Let $\bfx=(x_j)_{j\ge 1}$.
An element of finite degree $F(\bfx)$ in $\Z[\![\bfx;\db]\!]$  is called
a \emph{quasi-symmetric function}
if for any $i_1>i_2>\cdots>i_d$ and $j_1>j_2>\cdots>j_d$ and any signed powers
$e_1,\dots,e_d\in \db$ the coefficients of the monomials
$x_{i_1}^{e_1}\cdots x_{i_d}^{e_d}$ and $x_{j_1}^{e_1}\cdots x_{j_d}^{e_d}$
are the same. The set of all such quasi-symmetric functions is denoted by $\QSym_2$.
\end{defn}

For positive integer $n$ we define a (weight) graded algebra homomorphism
\begin{align*}
  \phi_n: \quad(\fA_2^1,\ast)   & \lra \quad \QSym_2 \\
       W(\bfs) & \lmaps
       \sum_{n\ge k_1>k_2>\dots>k_d\ge 1} x_{k_1}^{s_1}\cdots x_{k_d}^{s_d},\quad \forall d\le n, \bfs\in\db^d,
\end{align*}
and set $\phi_n(\myone)=1$ and
$\phi_n(\bfw)=0$ if the $|\bfw|>n$. It is easy to
see we can make $(\phi_n)_{n\ge 1}$ into a compatible system to
obtain a homomorphism $\phi:\fA_2^1\to\QSym_2$.

\begin{eg}
We have $\phi(\y_{1,1}\y_{2,-1})=\sum_{i>j\ge 1} x_ix_j^{\bar 2}\in\QSym_2$ but it is \emph{not}
a symmetric function since the monomial $x_2x_1^{\bar 2}$ appears but
$x_1x_2^{\bar 2}$ does not.
\end{eg}

It is not hard to see that an integral basis for $\QSym_2$ can be chosen as
\begin{align}
E_\bfs= E_{s_1,\dots,s_d}&\, := \sum_{k_1\ge k_2\ge\dots\ge k_d }
    x_{k_1}^{s_1}\cdots x_{k_d}^{s_d}, \quad\text{or} \notag \\
M_\bfs= M_{s_1,\dots,s_d}&\, := \sum_{k_1>k_2>\dots>k_d }
    x_{k_1}^{s_1}\cdots x_{k_d}^{s_d}
    =\phi(W(s_1,\dots,s_d)). \label{equ:transportQSymFromfA}
\end{align}
This yields the following theorem which can be compared to
\cite[Theorem~2.2]{Hoffman2004}.
\begin{thm}
$\phi$ provides an isomorphism $(\fA_2^1,*)\cong\QSym_2$.
\end{thm}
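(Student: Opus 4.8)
The plan is to show that $\phi:(\fA_2^1,*)\to\QSym_2$ is a bijective algebra homomorphism. The fact that $\phi$ is an algebra homomorphism is already built into its construction: each $\phi_n$ was declared to be an algebra homomorphism from $(\fA_2^1,\ast)$, and the compatibility of the system $(\phi_n)_{n\ge1}$ passes this property to the limit $\phi$. So the real content is to prove that $\phi$ is a bijection of $\Q$-modules (equivalently, of graded $\Z$-modules on a suitable integral form). This is precisely the kind of statement one proves by exhibiting matching bases and checking that $\phi$ carries one to the other.

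First I would recall that $\setY_2^\ast$ is a $\Q$-basis (indeed a $\Z$-basis) of $\fA_2^1$, since $\fA_2^1$ is by definition the free noncommutative algebra on the alphabet $\setY_2=\{\y_{n,\mu}\}$, and that $W:\bigcup_{d\ge1}\db^d\to\setY_2^\ast$ is a bijection. Hence the elements $W(\bfs)$, as $\bfs$ ranges over all finite tuples in $\db$, form a basis of $\fA_2^1$. On the target side, the monomial quasi-symmetric functions $M_\bfs$ defined in \eqref{equ:transportQSymFromfA} form an integral basis of $\QSym_2$: every quasi-symmetric function is, by Definition~\ref{defn:QSym2}, determined by the common coefficient attached to each ``shape'' $(e_1,\dots,e_d)\in\db^d$, and collecting all monomials of a given shape yields exactly $M_{e_1,\dots,e_d}$. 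This is the point that requires a genuine (though routine) argument: one must verify that the $M_\bfs$ are linearly independent and that they span, i.e.\ that an arbitrary finite-degree quasi-symmetric function is uniquely a finite $\Z$-linear combination of the $M_\bfs$. The formula \eqref{equ:transportQSymFromfA} then reads $\phi(W(\bfs))=M_\bfs$, so $\phi$ sends the basis $\{W(\bfs)\}$ bijectively onto the basis $\{M_\bfs\}$, and is therefore an isomorphism of graded $\Q$-modules.

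Concretely, the steps I would carry out are: (1) note that $\phi$ is an algebra homomorphism, as discussed above; (2) identify $\{W(\bfs):\bfs\in\db^d,\ d\ge1\}$ as a basis of $\fA_2^1$ via the freeness of $\fA_2^1$ on $\setY_2$ together with the bijectivity of $W$; (3) prove that $\{M_\bfs\}$ is an integral (hence $\Q$-)basis of $\QSym_2$, using the defining quasi-symmetry condition to read off, for any $F\in\QSym_2$ of finite degree, the unique expansion $F=\sum_\bfs c_\bfs M_\bfs$ with the $c_\bfs$ the prescribed common coefficients; and (4) invoke $\phi(W(\bfs))=M_\bfs$ to conclude that $\phi$ is a linear bijection, and therefore an algebra isomorphism, being a bijective homomorphism.

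The main obstacle is step (3), the claim that the $M_\bfs$ are an integral basis of $\QSym_2$. In the classical setting (powers in $\N$) this is standard, but here the powers lie in the larger index set $\db$ and carry the nonstandard multiplication $x_j^e x_j^{e'}=x_j^{e\oplus e'}$ coming from \eqref{equ:oplusDefn}. I must therefore check that this $\oplus$-structure does not interfere with the combinatorics: within a single monomial $x_{k_1}^{e_1}\cdots x_{k_d}^{e_d}$ all the $k_i$ are distinct, so no two exponents on the same variable are ever combined, and the shape $(e_1,\dots,e_d)\in\db^d$ of a monomial is a well-defined invariant. Granting that, the quasi-symmetry condition in Definition~\ref{defn:QSym2} says exactly that the coefficient of a monomial depends only on its shape, which is the statement that the $M_\bfs$ span $\QSym_2$ freely. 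Linear independence is immediate since distinct $\bfs$ give monomials of distinct shapes with no overlap. Once this bookkeeping is dispatched, the rest is formal.
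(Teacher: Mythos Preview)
Your proposal is correct and is essentially the same argument the paper has in mind: the paper's own proof is simply ``Clear,'' because immediately before the theorem it records that $\{M_\bfs\}$ is an integral basis of $\QSym_2$ and that $\phi(W(\bfs))=M_\bfs$ by \eqref{equ:transportQSymFromfA}, so $\phi$ sends a basis to a basis. Your write-up just unpacks this, including the routine check that distinct shapes give disjoint families of monomials so the $M_\bfs$ really are independent.
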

\begin{proof}
Clear.
\end{proof}

\begin{thm} \label{thm:HoffmanAntipodeS}
The antipode $S$ of $\QSym_2$ is given by the followings:
for every $\bfs\in\db^d$,
\begin{itemize}
\item[\upshape{(i)}]
    $S(M_\bfs)=(-1)^d E_{\revs{\bfs}}$, where
    $E_\bft=\sum_{\bfs\preceq \bft} M_\bfs$
    and $\bft\preceq \bfs$ means $\bft$ can be obtained
    from $\bfs$ by combining some of its parts using $\oplus$.
\item[\upshape{(ii)}]
    $\displaystyle S(M_\bfs)=(-1)^d E_{\revs{\bfs}}
        =\sum_{\bigsqcup_{j=1}^r \bfs_j=\bfs}
     (-1)^r M_{\bfs_1}M_{\bfs_2}\cdots M_{\bfs_r}$, where
   $\bigsqcup_{j=1}^r \bfs_j$ is the concatenation of $\bfs_1$ to $\bfs_r$.
\item[\upshape{(iii)}]
    $\displaystyle M_{\revs{\bfs}} =(-1)^d \sum_{\bigsqcup_{j=1}^r \bfs_j=\bfs}
     (-1)^r E_{\bfs_1}E_{\bfs_2}\cdots E_{\bfs_r}$.
\end{itemize}
\end{thm}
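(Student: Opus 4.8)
The plan is to treat $\QSym_2$ as a connected graded Hopf algebra and read off $S$ on the monomial basis. Under the isomorphism $\phi\colon(\fA_2^1,\ast)\cong\QSym_2$ I transport to $\QSym_2$ the standard deconcatenation coproduct of the quasi-shuffle Hopf algebra $\fA_2^1$, which on the monomial basis reads
\[
\Delta(M_\bfs)=\sum_{i=0}^{d}M_{(s_1,\dots,s_i)}\otimes M_{(s_{i+1},\dots,s_d)}.
\]
Since $\QSym_2$ is commutative and graded connected, the antipode $S$ exists and is unique, is an algebra homomorphism (the antipode is always an algebra anti-homomorphism, and commutativity removes the ``anti''), and satisfies $S^2=\id$; I use these freely. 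The second equality in (ii) is then immediate from Takeuchi's formula $S=\sum_{k\ge0}(-1)^k\pi^{\ast k}$ with $\pi=\id-u\eps$: iterating $\Delta$ and discarding the tensor factors killed by $\pi$ turns $\pi^{\ast r}(M_\bfs)$ into the sum of the products $M_{\bfs_1}\cdots M_{\bfs_r}$ over factorizations $\bigsqcup_j\bfs_j=\bfs$ into nonempty consecutive blocks, whence $S(M_\bfs)=\sum_{\bigsqcup_j\bfs_j=\bfs}(-1)^rM_{\bfs_1}\cdots M_{\bfs_r}$.

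It remains to identify this with $(-1)^dE_{\revs{\bfs}}$, i.e.\ to prove (i). I would verify directly that $\tilde S(M_\bfs):=(-1)^dE_{\revs{\bfs}}$ satisfies the antipode axiom $m(\id\otimes\tilde S)\Delta=u\eps$. Feeding in the deconcatenation coproduct reduces this to the single identity
\[
\sum_{i=0}^{d}(-1)^{d-i}\,M_{(s_1,\dots,s_i)}\,E_{(s_d,\dots,s_{i+1})}=0\qquad(d\ge1),
\]
whose $i=0$ summand is exactly $(-1)^dE_{\revs{\bfs}}$ and whose $i=d$ summand is $M_\bfs$. The crucial simplification is that in $\QSym_2$ the product is literally multiplication of the underlying power series, so this is an identity of series in $x_1,x_2,\dots$. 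Writing
\[
E_{\revs{(s_1,\dots,s_i)}}=\sum_{l_1\le\cdots\le l_i}x_{l_1}^{s_1}\cdots x_{l_i}^{s_i},\qquad
M_{(s_{i+1},\dots,s_d)}=\sum_{m_{i+1}>\cdots>m_d}x_{m_{i+1}}^{s_{i+1}}\cdots x_{m_d}^{s_d},
\]
each summand is an assignment of a variable index to every position $1,\dots,d$, weakly increasing on the prefix and strictly decreasing on the suffix (with coincident indices merged by the rule $x_j^e x_j^{e'}=x_j^{e\oplus e'}$).

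The main obstacle is to show these configurations cancel in the signed sum over the cut $i$. I would build a sign-reversing involution that toggles the part adjacent to the cut across it: comparing the index $l_i$ carrying $s_i$ with the index $m_{i+1}$ carrying $s_{i+1}$ dictates whether to push position $i$ into the strictly-decreasing group (lowering $i$) or position $i+1$ into the weakly-increasing group (raising $i$), the weak/strict inequalities being arranged so that each non-fixed configuration pairs with one of the opposite sign and no configuration is fixed when $d\ge1$. The structural point that keeps the signs honest is that $\oplus$ \emph{decouples}: $|a\oplus b|=|a|+|b|$ and $\sgn(a\oplus b)=\sgn(a)\sgn(b)$, so every $\sgn(s_j)$ is merely carried along multiplicatively under each merge. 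Consequently the involution is combinatorially identical to the one for ordinary $\QSym$, and the statement reduces to the classical antipode identity $S(M_\alpha)=(-1)^{\ell(\alpha)}E_{\revs{\alpha}}$ with signs attached; equivalently one may simply invoke the antipode of a general quasi-shuffle Hopf algebra on the alphabet $\setY_2$ with merging $\y_{m,\mu}\diamond\y_{n,\nu}=\y_{m+n,\mu\nu}$, of which $\QSym_2$ is an instance.

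Finally (iii) follows formally from (i) and (ii). Applying $S$ to the product formula in (ii), using that $S$ is an algebra homomorphism with $S(M_{\bfs_j})=(-1)^{\dep(\bfs_j)}E_{\revs{\bfs_j}}$, that $\sum_j\dep(\bfs_j)=d$, and that $S^2=\id$, yields
\[
M_\bfs=(-1)^d\sum_{\bigsqcup_j\bfs_j=\bfs}(-1)^rE_{\revs{\bfs_1}}\cdots E_{\revs{\bfs_r}}.
\]
Replacing $\bfs$ by $\revs{\bfs}$ and reindexing the block factorizations by reversal of both the words and their order (then using commutativity of $\QSym_2$ to put the $E$-factors back in the original order) turns the right-hand side into $\sum_{\bigsqcup_j\bfs_j=\bfs}(-1)^rE_{\bfs_1}\cdots E_{\bfs_r}$, which is precisely the statement of (iii).
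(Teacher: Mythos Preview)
Your proof is correct and follows essentially the same approach as the paper: parts (i) and (ii) are obtained from the classical antipode formula for quasi-shuffle Hopf algebras (the paper simply cites Hoffman and observes that the argument carries over verbatim to signed compositions, while you spell out Takeuchi's formula and the standard sign-reversing involution), and part (iii) is then derived by applying $S$ to (ii) and invoking (i) together with $S^2=\id$, exactly as the paper indicates.
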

\begin{proof}
The case for positive compositions $\bfs$ for part (i) and (ii)
is just \cite[Theorem~6.2]{Hoffman2005}.
A careful reading of their proofs reveals that all the steps are still valid when
the components of $\bfs$ are signed number.
Applying antipode $S$ to (ii) and use (i) we can derive (iii) quickly.
\end{proof}

\section{Concatenation relations of FESs}\label{sec:concat}
We now apply the above results concerning quasi-symmetric
functions to FESs. In order to do so, for any $n\in\N$, we define
the algebra homomorphism $\rho_n:(\fA_2^1,*)\to\Q$ such that
\begin{equation}\label{equ:defnOfRho}
  \rho_n(M_\bfs)=H_n(\bfs).
\end{equation}

\begin{thm} \label{thm:Hoff}
For all $\bfs=(s_1,\dots,s_d)\in\db^d$ and positive integer $\ell$
\begin{equation}  \label{equ:FMZVSH}
\zeta^\star_{\calA_\ell}(\bfs) =\sum_{\bft\preceq \bfs} \zeta_{\calA_\ell}(\bft),
\qquad \zeta_{\calA_\ell}(\bfs) =\sum_{\bft\preceq \bfs} (-1)^{\dep(\bfs)-\dep(\bft)}\zeta^\star_{\calA_\ell}(\bft).
\end{equation}
When $\ell=1$ we have
\begin{align} \label{equ:FMZSV=concatFMZV}
\zeta^\star_{\calA_1}(\tla{\bfs}) &=(-1)^d \sum_{\bigsqcup_{j=1}^r
\bfs_j=\bfs} (-1)^r \prod_{j=1}^r \zeta_{\calA_1}(\bfs_j),\\
\label{equ:FMZV=concatFMZSV}
\zeta_{\calA_1}(\tla{\bfs}) &=(-1)^d \sum_{\bigsqcup_{j=1}^r
\bfs_j=\bfs} (-1)^r \prod_{j=1}^r \zeta^\star_{\calA_1}(\bfs_j).
\end{align}
\end{thm}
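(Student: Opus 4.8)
The plan is to derive all four identities uniformly by applying, for each prime $p$, the algebra homomorphism $\rho_{p-1}\colon(\fA_2^1,*)\to\Q$ of \eqref{equ:defnOfRho} to identities that already live in $\QSym_2$, and then reducing modulo $p^\ell$ and assembling the outcomes over all primes inside $\calA_\ell$. The only input I need beyond \eqref{equ:defnOfRho} is the value of $\rho_n$ on the basis $\{E_\bfs\}$: using $E_\bfs=\sum_{\bft\preceq\bfs}M_\bft$ from Theorem~\ref{thm:HoffmanAntipodeS}(i) together with the multiplicativity of $\rho_n$, I get
\[
\rho_n(E_\bfs)=\sum_{\bft\preceq\bfs}H_n(\bft)=H^\star_n(\bfs),
\]
the last equality being exactly the expansion of a star sum according to which of the weak inequalities $k_i\ge k_{i+1}$ are equalities, each equality merging two adjacent arguments via $\oplus$.

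For \eqref{equ:FMZVSH} the first identity is then immediate: I would evaluate $E_\bfs=\sum_{\bft\preceq\bfs}M_\bft$ under $\rho_{p-1}$ to obtain the exact rational identity $H^\star_{p-1}(\bfs)=\sum_{\bft\preceq\bfs}H_{p-1}(\bft)$, and then reduce modulo $p^\ell$ and collect over all $p$. For the inverse identity I would invert the relation $E=\sum M$ directly in $\QSym_2$. The coarsenings $\bft\preceq\bfs$ of a fixed $\bfs$ of depth $d$ form a Boolean lattice, indexed by the subset of the $d-1$ adjacent gaps that get merged, whose M\"obius function is $(-1)^{\dep(\bfs)-\dep(\bft)}$; M\"obius inversion gives $M_\bfs=\sum_{\bft\preceq\bfs}(-1)^{\dep(\bfs)-\dep(\bft)}E_\bft$ in $\QSym_2$, and applying $\rho_{p-1}$, reducing, and collecting produces the second formula.

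For the two concatenation identities I would take the explicit antipode formulas of Theorem~\ref{thm:HoffmanAntipodeS}(ii),(iii), which are honest identities in the algebra $\QSym_2$, and push them through $\rho_{p-1}$. Since $\rho_{p-1}$ is multiplicative, part (ii) gives the exact scalar identity
\[
(-1)^dH^\star_{p-1}(\tla{\bfs})=\sum_{\bigsqcup_{j=1}^r\bfs_j=\bfs}(-1)^r\prod_{j=1}^rH_{p-1}(\bfs_j),
\]
while part (iii) gives
\[
H_{p-1}(\tla{\bfs})=(-1)^d\sum_{\bigsqcup_{j=1}^r\bfs_j=\bfs}(-1)^r\prod_{j=1}^rH^\star_{p-1}(\bfs_j).
\]
Reducing these modulo $p$ and collecting over all primes yields \eqref{equ:FMZSV=concatFMZV} and \eqref{equ:FMZV=concatFMZSV}. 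The same reduction modulo $p^\ell$ works verbatim, so both in fact hold for every superbity; I record only the $\ell=1$ case that we use.

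The step I expect to be the real, and essentially only, obstacle is the justification of the two structural facts underpinning everything: that $\rho_n$ is a well-defined algebra homomorphism on $(\fA_2^1,*)$ and that $\rho_n(E_\bfs)=H^\star_n(\bfs)$ in the \emph{signed} setting, since the merging rule now uses $\oplus$ on $\db$ rather than ordinary addition. Both reduce to the stuffle relation for AMHSs and to Theorem~\ref{thm:HoffmanAntipodeS}(i), but they must be checked with the signs in place. The passage from the exact $\Q$-identities at $n=p-1$ to identities in $\calA_\ell$ is by contrast harmless: each $H_{p-1}(\bft)$ and $H^\star_{p-1}(\bft)$ sums only over indices $k<p$, hence is $p$-integral, so each finite product $\prod_{j=1}^rH_{p-1}(\bfs_j)$ is $p$-integral and reduction modulo $p^\ell$ commutes with the finite sums and products appearing on both sides.
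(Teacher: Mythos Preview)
Your proposal is correct and follows essentially the same route as the paper: apply $\rho_{p-1}$ to the $\QSym_2$ identities coming from the definition of $E_\bfs$ and from Theorem~\ref{thm:HoffmanAntipodeS}(ii),(iii), then reduce modulo $p^\ell$. You have simply unpacked the details the paper leaves implicit---the explicit check that $\rho_n(E_\bfs)=H^\star_n(\bfs)$, the M\"obius inversion for the second half of \eqref{equ:FMZVSH}, and the $p$-integrality argument---and you correctly observe in passing that the concatenation identities in fact hold for every superbity $\ell$.
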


\begin{proof}
These equations follow from the definition of $E_\bfs$,
Theorem~\ref{thm:HoffmanAntipodeS}(ii) and (iii), respectively,
after we apply $\rho_{p-1}$ for all primes $p$.
\end{proof}

\begin{defn}
We will call the relations in \eqref{equ:FMZSV=concatFMZV}
and \eqref{equ:FMZV=concatFMZSV} the
\emph{concatenation relations} between FMZVs and FESs.
\end{defn}

\begin{eg}
By \eqref{equ:FMZVSH}
and the concatenation relation \eqref{equ:FMZSV=concatFMZV} we have
\begin{equation*}
 \zeta_{\calA_1}(1,\bar2)+\zeta_{\calA_1}(\bar3)= \zeta^\star_{\calA_1}(1,\bar2)
=\zeta_{\calA_1}(\bar2)\zeta_{\calA_1}(1)-\zeta_{\calA_1}(\bar2,1).
\end{equation*}
Thus we get $\zeta_{\calA_1}(\bar3)=-2\zeta_{\calA_1}(\bar2,1)$ since
$\zeta_{\calA_1}(1)=0$ and $\zeta_{\calA_1}(1,\bar2)=\zeta_{\calA_1}(\bar2,1)$
by the reversal relation \eqref{equ:reversal}.
\end{eg}

\section{Duality of FMZVs}\label{sec:duality}
The FMZVs satisfy a different kind of duality from that of MZVs.

\begin{defn}\label{defn:v-dual}
For positive integers $r_1,\dots,r_\ell,t_1,\dots,t_\ell$, let
\begin{equation*}
\bfs=(r_1,\{1\}^{t_1-1},r_2+1,\{1\}^{t_2-1},\dots,r_\ell+1,\{1\}^{t_\ell-1}).
\end{equation*}
We define the v-dual of $\bfs$ by
\begin{equation}\label{equ:v-dualDefn2}
\bfs^\vee=(\{1\}^{r_1-1},t_1+1,\{1\}^{r_2-1},t_2+1,\dots,t_{\ell-1}+1,\{1\}^{r_\ell-1},t_\ell).
\end{equation}
\end{defn}
{}From the definition we clearly have
\begin{equation}\label{equ:lengthOfV-dual}
  \dep(\bfs)+\dep(\bfs^\vee)=|\bfs|+1.
\end{equation}

The v-dual can be easily explained using the conjugation on
the ribbons (a kind of skew-Young diagrams) as follows.
For a composition $\bfs=(s_1,\dots,s_d)$ the ribbon $R_\bfs$ is
defined to be the skew-Young diagram of $d$ rows whose $j$th row
starts below the last box of $(j-1)$st row and has
exactly $s_j$ boxes. Recall that the conjugate of a (skew-)Young diagram
is the mirror image about the diagonal line going from the south-west
corner to north-east.
For e.g., the following two diagrams give the ribbon
$R_{1,3,2}$ and its conjugate:
\begin{center}
 \young({\ },{\ }{\ }{\ },::{\ }{\ })
 \hskip2cm
 \young({\ },{\ }{\ },:{\ },:{\ }{\ })
\end{center}
In general it can be shown without too much difficulty that
the ribbon $R_{\bfs^\vee}$ is exactly the conjugate of the
ribbon $R\!\text{\raisebox{-1pt}{$\scriptscriptstyle \revs{\bfs}$} }\hskip-0.3pt$.
So $(2,3,1)^\vee=(1,2,1,2)$.

\begin{thm}\label{thm:dualFMZVsuperb=1} \emph{(\cite[Theorem~6.7]{Hoffman2005})}
 Let $\bfs=(s_1,\dots,s_d)\in\N^d$. Then
\begin{equation}  \label{equ:FMZVduality}
\zeta^\star_{\calA_1}(\bfs) =\, -\zeta^\star_{\calA_1}(\bfs^\vee).
\end{equation}
\end{thm}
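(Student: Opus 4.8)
The plan is to compute the star value through the quasi-symmetric machinery of \S\ref{sec:QSym}. By \eqref{equ:FMZVSH} and \eqref{equ:defnOfRho} the value $\zeta^\star_{\calA_1}(\bfs)$ is represented by $\big(H^\star_{p-1}(\bfs)\big)_p=\big(\rho_{p-1}(E_\bfs)\big)_p$. Recall from the discussion preceding the theorem that $\bfs^\vee$ is the composition whose ribbon $R_{\bfs^\vee}$ is the conjugate of $R_{\revs{\bfs}}$. Since conjugation and reversal of ribbons commute, $\bfs^\vee=\revs{(\bfs^{\mathsf t})}$, where $\bfs^{\mathsf t}$ denotes the composition whose ribbon is the conjugate of $R_{\bfs}$. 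Applying the star reversal relation \eqref{equ:reversal}, namely $\zeta^\star_{\calA_1}(\bfs^\vee)=(-1)^{|\bfs|}\zeta^\star_{\calA_1}(\bfs^{\mathsf t})$ (note $|\bfs^{\mathsf t}|=|\bfs|$ and $\sgn(\bfs^{\mathsf t})=1$), reduces \eqref{equ:FMZVduality} to the purely conjugational identity
$$ \zeta^\star_{\calA_1}(\bfs^{\mathsf t})=(-1)^{|\bfs|+1}\zeta^\star_{\calA_1}(\bfs),\qquad\text{i.e.}\qquad H^\star_{p-1}(\bfs^{\mathsf t})\equiv (-1)^{|\bfs|+1}H^\star_{p-1}(\bfs)\pmod p $$
for all sufficiently large $p$.

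Next I would isolate the arithmetic input that distinguishes conjugation from reversal. The antipode of Theorem~\ref{thm:HoffmanAntipodeS} intertwines $E_\bfs$ and $M_{\revs{\bfs}}$ and, under $\rho_{p-1}$, merely reproduces the reversal relation; conjugation (complementation of the descent set) instead interchanges the elementary and complete-homogeneous data of the ribbon, i.e. it is the classical $\om$-involution $r_\bfs\mapsto r_{\bfs^{\mathsf t}}$ on ribbon functions. The point is that $\rho_{p-1}$ is evaluation at the alphabet $x_k=1/k$, $1\le k\le p-1$, which as a multiset in $\F_p$ equals $\F_p^\times$ and is therefore invariant under $x\mapsto -x$. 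Concretely, for this alphabet one has mod $p$
$$ \prod_{k=1}^{p-1}\Big(1+\tfrac{t}{k}\Big)\equiv 1-t^{p-1},\qquad \prod_{k=1}^{p-1}\Big(1-\tfrac{t}{k}\Big)^{-1}\equiv \frac{1}{1-t^{p-1}}, $$
the first from $\prod_{k=1}^{p-1}(t-k)\equiv t^{p-1}-1$ together with Wilson's congruence $(p-1)!\equiv -1$, and the second by reciprocity since $p-1$ is even. Thus the elementary and complete generating series of the residue alphabet are mutually reciprocal modulo $p$, which is exactly the identity the $\om$-involution needs in order to descend through $\rho_{p-1}$.

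With these in hand I would establish the conjugational identity by expanding $E_\bfs$ according to the ribbon $R_\bfs$: group the word $\x_0^{s_1-1}\x_1\cdots\x_0^{s_d-1}\x_1$ into its maximal strings, so that the evaluation factors through products of the two series above, and observe that conjugating $R_\bfs$ transposes these strings, exchanging the two series. Because they are reciprocal mod $p$, the two evaluations agree after the exchange, and the single boundary discrepancy between $1-t^{p-1}$ and $(1-t^{p-1})^{-1}$ (equivalently, the Wilson sign) contributes the global factor $(-1)^{|\bfs|+1}$. Combining this with the reduction of the first paragraph yields \eqref{equ:FMZVduality}.

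I expect the genuine difficulty to lie in this last step. The subtlety is that when $\bfs$ has parts $\ge 2$ the function $E_\bfs$ is \emph{not} a single ribbon (fundamental) quasi-symmetric function---indeed naively evaluating a ribbon function at $\{1/k\}$ gives $0$---so one cannot simply invoke $\om(r_\bfs)=r_{\bfs^{\mathsf t}}$ and read off the answer. The exponents must be carried through the expansion, and the bookkeeping that converts the mod-$p$ reciprocity of the two series into the precise \emph{uniform} sign $(-1)^{|\bfs|+1}$, independent of the depth and of the distribution of the parts, is where the care is required. An alternative, and perhaps cleaner, route would bypass the ribbon expansion by writing $H^\star_{p-1}(\bfs)$ as an iterated sum over a weakly decreasing chain of length $|\bfs|$ and realizing the conjugation directly as the reflection $k\mapsto p-k$ of that chain combined with the exchange of its strict and weak steps; the same generating-series collapse then supplies the sign.
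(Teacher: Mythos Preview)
The paper does not give its own proof of this theorem; it is simply quoted from Hoffman with a citation, so there is no in-paper argument to compare against.

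On its own merits, your sketch assembles correct ingredients but leaves the decisive step genuinely open, and I do not think either of your two suggested routes closes it as stated. The reduction via the reversal relation to the ``pure conjugation'' form is valid (though in fact unnecessary: in terms of the descent set $S(\bfs)\subseteq[n-1]$ one has $S(\bfs^\vee)=[n-1]\setminus S(\bfs)$, so the original identity is already the cleanest formulation). Your mod-$p$ identities $\prod_{k=1}^{p-1}(1+t/k)\equiv 1-t^{p-1}$ and $\prod_{k=1}^{p-1}(1-t/k)^{-1}\equiv(1-t^{p-1})^{-1}$ are correct. The difficulty is the inference. The $\omega$-involution interchanging $e_n$ and $h_n$ lives on $\Sym$ (or on the ribbon basis of $\QSym$), whereas $E_\bfs=\sum_{\bft\preceq\bfs}M_\bft$ is neither a ribbon function nor symmetric, and in general $E_\bfs+E_{\bfs^\vee}\notin\Sym$: for $\bfs=(1,3)$, $\bfs^\vee=(2,1,1)$ one computes
\[
E_{(1,3)}+E_{(2,1,1)}=2M_{(4)}+M_{(1,3)}+M_{(3,1)}+M_{(2,2)}+M_{(2,1,1)},
\]
in which $M_{(2,1,1)}$ occurs with coefficient $1$ but $M_{(1,2,1)}$ and $M_{(1,1,2)}$ with coefficient $0$. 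So the reciprocity of the two generating series cannot by itself force $\rho_{p-1}(E_\bfs)$ and $\rho_{p-1}(E_{\bfs^\vee})$ to differ only by a sign; there is no way to ``factor $\rho_{p-1}$ through $\omega$'' on the nose.

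Your alternative route has the same defect. Parametrising $H^\star_{p-1}(\bfs_S)$ by length-$n$ chains with ``$=$'' at positions $i\notin S$ and ``$\ge$'' at positions $i\in S$, the substitution $a_i\mapsto p-a_i$ turns each $\ge$ into $\le$ and fixes each $=$; after reindexing this produces the chain condition for $n-S$, i.e.\ the \emph{reversal} $\revs\bfs$, not the complement $S^c$. Your phrase ``exchange of strict and weak steps'' would have to change the domain of summation rather than relabel it, and no sign-preserving bijection does that. What is actually needed---and what Hoffman supplies---is an algebraic identity in the quasi-shuffle Hopf algebra placing $E_\bfs+E_{\bfs^\vee}$ inside the two-sided ideal generated by the depth-one pieces $M_{(k)}$, $1\le k\le n$; since $\rho_{p-1}(M_{(k)})=H_{p-1}(k)\equiv 0\pmod p$ for $1\le k\le p-2$, this forces the sum to vanish. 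That structural identity, rather than the $e/h$ reciprocity you isolate, is the missing step.
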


We now consider duality property in superbity two.
\begin{thm}\label{thm:FMZVv-dualsuperb=2}  \emph{(\cite[Theorem~2.11]{Zhao2008a})}
Let $\bfs$ be any composition of positive integers of weight $w$.  Then
\begin{equation} \label{equ:dualityLevel2}
-\zeta_{\calA_2}^\star(\bfs^\vee)=\zeta_{\calA_2}^\star(\bfs)
+p\cdot\sum_{\bft\preceq\bfs}\zeta_{\calA_2}(1,\bft).
\end{equation}
\end{thm}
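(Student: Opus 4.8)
The plan is to prove \eqref{equ:dualityLevel2} as a congruence modulo $p^2$ between multiple harmonic star sums and then transport it into $\calA_2$. Concretely, I would show that for every sufficiently large prime $p$,
\[
-H^\star_{p-1}(\bfs^\vee)\equiv H^\star_{p-1}(\bfs)+p\sum_{\bft\preceq\bfs}H_{p-1}(1,\bft)\pmod{p^2},
\]
after which the definition of $\zeta^\star_{\calA_2}$ and $\zeta_{\calA_2}$ (together with the convention that $p$ denotes $(p)_{p\in\calP}$) packages this directly into the asserted identity in $\calA_2$. Modulo $p$ this congruence is \emph{exactly} the superbity-one duality of Theorem~\ref{thm:dualFMZVsuperb=1}, since the correction term carries a factor of $p$; so the entire content is the determination of the first-order term in $p$.

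The first step is a combinatorial model for the index set. Writing $w=|\bfs|$ and letting $S_\bfs=\{s_1,s_1+s_2,\dots,s_1+\dots+s_{d-1}\}\subseteq[w-1]$ be the descent set of $\bfs$, one has
\[
H^\star_{p-1}(\bfs)=\sum \prod_{i=1}^w\frac1{a_i},
\]
the sum ranging over weakly decreasing sequences $p>a_1\ge a_2\ge\dots\ge a_w\ge1$ subject to $a_i=a_{i+1}$ for every $i\in[w-1]\setminus S_\bfs$ (each part $s_j$ of $\bfs$ contributes a constant block of length $s_j$). In this language the reversal $\tla{\bfs}$ corresponds to complementing $S_\bfs$ by the reflection $i\mapsto w-i$, while the v-dual $\bfs^\vee$ corresponds to its \emph{conjugate}, i.e.\ to replacing $S_\bfs$ by $[w-1]\setminus(w-S_\bfs)$, which is precisely the ribbon conjugation described before \eqref{equ:FMZVduality}. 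The crucial point is that $\bfs^\vee$ interchanges the roles of the forced equalities (weak steps) and the free positions (allowed strict decreases), something a naive reflection $a_i\mapsto p-a_i$ does not achieve; indeed that reflection alone yields only Theorem~\ref{thm:reversal2}.

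The heart of the argument is therefore a bijection $\Phi$ on these constrained $\{1,\dots,p-1\}$-sequences realizing ribbon conjugation, i.e.\ converting forced-equality positions into free positions and back. I would build $\Phi$ as the composition of the order-reversing reflection $a_i\mapsto p-a_{w+1-i}$ with the classical weak$\leftrightarrow$strict conjugation obtained by subtracting a staircase; the subtlety is that the staircase subtraction must stay inside $\{1,\dots,p-1\}$, so exactly one index per affected block can ``wrap around'' $p$. Having fixed $\Phi$, I would expand the transported summand to second order via
\[
\frac1{(p-k)^s}\equiv(-1)^s\Bigl(\frac1{k^s}+\frac{sp}{k^{s+1}}\Bigr)\pmod{p^2},
\]
so that the $p^0$-term reproduces $-H^\star_{p-1}(\bfs^\vee)\equiv H^\star_{p-1}(\bfs)$ (Theorem~\ref{thm:dualFMZVsuperb=1}), and the $p^1$-term collects the corrections. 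The wrap-around contributions — exactly the places where the staircase crosses $p$ — each contribute one extra reciprocal factor with exponent $1$ at the new smallest index, and summing over the block into which the wrap-around falls produces the coarsenings $\bft\preceq\bfs$, yielding $p\sum_{\bft\preceq\bfs}H_{p-1}(1,\bft)$.

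The main obstacle I anticipate is making $\Phi$ precise while tracking the $p$-defect: one must verify that the reflection and the staircase step interact correctly, that each constrained block contributes exactly one wrap-around term (with the within-block terms cancelling so that only the prepended $1$ survives), and that the resulting weights assemble into the coarsening sum $\sum_{\bft\preceq\bfs}$ rather than into an $\oplus\bfe_i$ sum as in \eqref{equ:reversal2star}. As safeguards I would (i) cross-check the final formula against the star/non-star dictionary \eqref{equ:FMZVSH} and the reversal relation \eqref{equ:reversal2star}, and (ii) confirm the lowest-weight cases directly (for instance $\bfs=(2)$, where $\bfs^\vee=(1,1)$ and the sum over $\bft\preceq\bfs$ reduces to the single term $\zeta_{\calA_2}(1,2)$) to fix all signs and normalizations. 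A less bijective alternative would be to take Theorem~\ref{thm:dualFMZVsuperb=1} as given, note that $-\zeta^\star_{\calA_2}(\bfs^\vee)-\zeta^\star_{\calA_2}(\bfs)$ is then a $p$-divisible element of $\calA_2$, and identify its quotient by $p$ by induction on the weight using Theorem~\ref{thm:Hoff} and \eqref{equ:reversal2star}; but pinning down that quotient ultimately requires the same bookkeeping.
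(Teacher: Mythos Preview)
The paper itself does not supply a proof of this theorem; it is quoted from \cite[Theorem~2.11]{Zhao2008a}, so there is nothing in the present text to compare your attempt against directly.

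Judging your plan on its own: the overall architecture (establish the congruence modulo $p^2$ for each large prime and then pass to $\calA_2$; observe that modulo $p$ it reduces to Theorem~\ref{thm:dualFMZVsuperb=1}) is exactly right. The central step, however, cannot work as you describe it. You propose a bijection $\Phi$ between the index set for $H^\star_{p-1}(\bfs)$ and that for $H^\star_{p-1}(\bfs^\vee)$, but in your own sequence model these two sets have \emph{different cardinalities}. Already for $\bfs=(2)$ the index set consists of the $p-1$ diagonal pairs $a_1=a_2$, while for $\bfs^\vee=(1,1)$ it consists of all $\binom{p}{2}$ pairs $a_1\ge a_2$. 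No ``reflection plus staircase'' can bridge an $O(p)$-fold discrepancy in the number of summands by generating $O(1)$ wrap-around defects per term. The v-duality at superbity one is simply \emph{not} a termwise bijection: Hoffman's proof of Theorem~\ref{thm:dualFMZVsuperb=1} is algebraic (an antipode/inclusion--exclusion identity in $\QSym$), and it is that identity one must lift.

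Your ``less bijective alternative'' is the viable route and is essentially what \cite{Zhao2008a} does. One starts from the finite-$n$ identity underlying Hoffman's duality, which at $n=p-1$ involves the weights $(-1)^k\binom{p-1}{k}$, and expands those weights to second order in $p$ (precisely the expansion the present paper later records at \eqref{equ:expansionBinomp}). The linear-in-$p$ term of $(-1)^k\binom{p-1}{k}\equiv 1-p\,H_k(1)\pmod{p^2}$ contributes an extra harmonic factor $H_k(1)$; resumming prepends the argument $1$, and because this interacts with the refinement/inclusion--exclusion already built into Hoffman's identity, the result is indexed by coarsenings $\bft\preceq\bfs$. That is the structural reason the correction term has the shape $p\sum_{\bft\preceq\bfs}\zeta_{\calA_2}(1,\bft)$ rather than the $\bfs\oplus\bfe_i$ shape of \eqref{equ:reversal2star}.
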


Parallel to \cite[Corollary~1.12]{Jarossay2014} for the symmetrized
MZVs (see \eqref{equ:SMZVsha} and \eqref{equ:SMZVast}), the
following result on another kind of duality provides further
evidence for Conjecture~\ref{conj:KanekoZagier}.
\begin{thm}\label{thm:FMZVphi-dual}  \emph{(\cite[Theorem~6.9]{Hoffman2005})}
For all $\bfw\in\fA_1^1$ we have
\begin{equation*}
   \zeta_{\calA_1}(\bfw)=\zeta_{\calA_1}(\varphi(\bfw)),
\end{equation*}
where $\varphi$ is the involution on $\fA_1^1$ defined by
$\varphi(\x_0)=\x_0+\x_1$ and $\varphi(\x_1)=-\x_1$.
\end{thm}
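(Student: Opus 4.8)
The plan is to adapt the coefficient--extraction technique from the proof of Theorem~\ref{thm:FESshuffle}, but to replace the shuffle product by a \emph{functional equation} for the one--variable polylogarithms $\singL$ under the involutive Möbius substitution $z\mapsto z/(z-1)$, which is exactly the change of variables that realizes $\varphi$. Since both sides of the asserted identity are $\Q$-linear in $\bfw$ and $\varphi$ is a homomorphism, it suffices to treat $\bfw=W(\bfs)$ for a composition $\bfs=(s_1,\dots,s_d)\in\N^d$ (the empty word being trivial); here $\zeta_{\calA_1}(\bfw)$ is represented by $(H_{p-1}(\bfs)\bmod p)_p$.

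First I would record the functional equation. In the level--one integral representation of Lemma~\ref{lem:RacProp228} the letters $\x_0,\x_1$ correspond to the forms $\tfrac{\ud t}{t}$ and $\tfrac{\ud t}{1-t}$. Substituting $t=u/(u-1)$ (an involution fixing $0$) in $\singL(\bfw;z/(z-1))=\int_0^{z/(z-1)}(\cdots)$ sends the upper limit to $z$ and the lower limit $0$ to $0$, and pulls the forms back by $\tfrac{\ud t}{t}\mapsto\tfrac{\ud u}{u}+\tfrac{\ud u}{1-u}$ and $\tfrac{\ud t}{1-t}\mapsto-\tfrac{\ud u}{1-u}$, i.e.\ precisely by $\varphi$. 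This yields the formal power series identity
\begin{equation*}
\singL(\bfw;\tfrac{z}{z-1})=\singL(\varphi(\bfw);z),
\end{equation*}
valid in $\Q[\![z]\!]$ (both sides are analytic near $0$). Applying it to $\x_1\bfw$ and using $\varphi(\x_1\bfw)=-\x_1\varphi(\bfw)$ gives
\begin{equation*}
\singL(\x_1\bfw;\tfrac{z}{z-1})=-\singL(\x_1\varphi(\bfw);z).
\end{equation*}

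Next I would extract $\Coeff_{z^p}$ from both sides for an odd prime $p$. On the right, $\Coeff_{z^p}\singL(\x_1\bfv;z)=\tfrac1p H_{p-1}(\bfv)$ for every word $\bfv$ ending in $\x_1$ (set the leading index equal to $p$), so it contributes $-\tfrac1p H_{p-1}(\varphi(\bfw))$. On the left, writing $z/(z-1)=-z/(1-z)$ and $a_n:=\Coeff_{z^n}\singL(\x_1\bfw;z)=\tfrac1n H_{n-1}(\bfs)$, the expansion $(-z/(1-z))^n=(-1)^n z^n(1-z)^{-n}$ together with $\Coeff_{z^p}z^n(1-z)^{-n}=\binom{p-1}{n-1}$ gives
\begin{equation*}
\Coeff_{z^p}\singL(\x_1\bfw;\tfrac{z}{z-1})=\sum_{n=1}^{p}(-1)^n\binom{p-1}{n-1}\frac{H_{n-1}(\bfs)}{n}.
\end{equation*}
The term $n=p$ equals $-\tfrac1p H_{p-1}(\bfs)$, while the remaining sum $\Sigma:=\sum_{n=1}^{p-1}(-1)^n\binom{p-1}{n-1}H_{n-1}(\bfs)/n$ is manifestly $p$-integral. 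Equating the two extractions and clearing $\tfrac1p$ yields the exact identity $H_{p-1}(\varphi(\bfw))=H_{p-1}(\bfs)-p\,\Sigma$, whence $H_{p-1}(\varphi(\bfw))\equiv H_{p-1}(\bfs)\pmod p$ for every odd $p$; this is exactly $\zeta_{\calA_1}(\varphi(\bfw))=\zeta_{\calA_1}(\bfw)$. Note that only the $p$-integrality of $\Sigma$ is used, not the congruence $\binom{p-1}{n-1}\equiv(-1)^{n-1}$.

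The main obstacle is the rigorous justification of the functional equation, since $u\mapsto u/(u-1)$ carries $(0,1)$ onto $(-\infty,0)$, so a naive real change of variables leaves the disc of convergence and invites branch ambiguities. The cleanest remedy is to read the identity purely formally: $\singL(\bfw;\cdot)$ and $g(z)=z/(z-1)$ both have vanishing constant term, so the composite is a well-defined element of $\Q[\![z]\!]$, and the pullback--of--forms computation shows the coefficients agree; alternatively one verifies it by differentiating in $z$ and inducting on the weight, the depth-one, weight-two case being Landen's identity $\Li_2(z/(z-1))=-\Li_2(z)-\tfrac12\log^2(1-z)$. A secondary point is that prepending $\x_1$ (needed so that $\Coeff_{z^p}$ produces a single factor $1/p$ instead of $1/p^{\,s_1}$) is harmless, which is immediate from $\varphi(\x_1\bfw)=-\x_1\varphi(\bfw)$.
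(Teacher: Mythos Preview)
The paper does not give its own proof of this theorem; it is simply quoted from Hoffman with a citation, so there is no in-paper argument to compare against.

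Your argument is correct. The functional equation $\singL(\bfw;z/(z-1))=\singL(\varphi(\bfw);z)$ is best justified exactly as you suggest in your final paragraph: by induction on the length of $\bfw$ using the differential equations
\[
\frac{d}{dz}\singL(\x_0\bfu;z)=\frac{1}{z}\singL(\bfu;z),\qquad
\frac{d}{dz}\singL(\x_1\bfu;z)=\frac{1}{1-z}\singL(\bfu;z),
\]
together with the chain-rule computation that under $g(z)=z/(z-1)$ one has $g'(z)/g(z)=1/z+1/(1-z)$ and $g'(z)/(1-g(z))=-1/(1-z)$, matching $\varphi(\x_0)=\x_0+\x_1$ and $\varphi(\x_1)=-\x_1$. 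Since $g(0)=0$, both sides vanish at the origin and the identity holds in $\Q[\![z]\!]$ without any analytic continuation issues. The coefficient extraction is then a clean calculation; the only $p$ in the denominator on the left arises from the $n=p$ term, and the remainder $\Sigma$ is visibly $p$-integral.

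As a point of comparison: Hoffman's original argument (the one the paper cites) is algebraic rather than analytic, working through the Hopf-algebra structure on quasi-symmetric functions and identifying $\varphi$ with a known involution there. Your approach is closer in spirit to the coefficient-extraction proof of Theorem~\ref{thm:FESshuffle} in this paper, replacing the shuffle identity for $\singL$ by the Landen-type functional equation. Both routes are short; yours has the advantage of fitting into the paper's own toolkit and making the superbity-one restriction transparent (the neglected $p\Sigma$ is exactly the obstruction to lifting to $\calA_2$).
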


\section{Dimension conjectures for (finite) MZVs}  \label{sec:dimConjMZV}
Denote by $\FMZ_{w,\ell}$ the $\Q$-vector subspace of $\calA_\ell$
generated by all FMZVs of weight $w$ and superbity $\ell$.
Further we write $\FMZ_w=\FMZ_{w,1}$.
Numerical evidence supports the following conjecture.

\begin{conj}\label{conj:FMZVsuperbity1}
Let $w$ be any positive integer.
\begin{itemize}
  \item [\upshape{(i)}] Set $d_0=1$ and $d_w=\dim \FMZ_w$
for all $w\ge 1$.
Then $d_1=d_2=0$ and
$$d_w=d_{w-2}+d_{w-3}\quad \forall w\ge 3.$$

  \item [\upshape{(ii)}] For all $w\ge3$, $\FMZ_w$ has a basis
$$\{\zeta_{\calA_1}(1,2,a_1,\dots,a_r): a_1,\dots,a_r=2 \text{ or }3, a_1+\dots+a_r=w-3\}.$$

  \item [\upshape{(iii)}] All the $\Q$-linear relations among FMZVs
can be produced by the double shuffle relations.
\end{itemize}
\end{conj}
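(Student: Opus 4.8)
The plan is to prove the three parts in the order (ii) $\Rightarrow$ (i), treating (iii) as the assertion that the upper bound supplied by the double shuffle relations is sharp; I should flag at the outset that the hard half of (ii) is a finite analogue of Zagier's dimension conjecture and lies beyond the elementary congruence methods of this paper, so what is genuinely within reach is the spanning direction together with a reduction of linear independence to Conjecture~\ref{conj:A_1BernoulliAlgIndpt}. The count in (i) is forced by (ii): if $c_n$ is the number of sequences $a_1,\dots,a_r\in\{2,3\}$ with $a_1+\dots+a_r=n$, then splitting off the last part gives $c_n=c_{n-2}+c_{n-3}$ with $c_0=1$, $c_1=0$, $c_2=1$, and matching this against $d_w=d_{w-2}+d_{w-3}$ with $d_1=d_2=0$ shows $d_w=c_{w-3}$ for $w\ge3$, so a genuine basis of the stated shape has exactly $d_w$ elements.

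For the spanning half of (ii)---every weight-$w$ FMZV lies in the $\Q$-span of the Hoffman-type elements $\zeta_{\calA_1}(1,2,a_1,\dots,a_r)$ with $a_i\in\{2,3\}$---I see two routes. The cleaner one transports the problem across the conjectural Kaneko--Zagier comparison $\FMZ_w\cong\MZV_w/\zeta(2)\MZV_{w-2}$ and applies Brown's theorem \cite{Brown2012} that the $\{2,3\}$-Hoffman elements span $\MZV_w$; modding out by $\zeta(2)$ then descends this to the finite side. The self-contained route, which is exactly the content of (iii), works directly inside $\calA_1$: using that every depth-one value vanishes (Theorem~\ref{thm:homogeneousWols1} in even weight, and \eqref{equ:homogeneousWolsOddWeight} projected to superbity one in odd weight), I would feed the stuffle relations of \S\ref{sec:FMZVstuffle}, the shuffle relations of Theorem~\ref{thm:FESshuffle}, and the reversal and concatenation identities of \S\S\ref{sec:FMZVrev}--\ref{sec:concat} into a triangular elimination against a monomial order favoring small entries, removing every argument exceeding $3$ and normalizing the head to $(1,2)$. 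Either route yields $\dim\FMZ_w\le d_w$.

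The linear-independence half---equivalently $\dim\FMZ_w\ge d_w$, hence the exact count in (i)---is the main obstacle. Unwinding any putative relation among the claimed basis elements through the explicit depth-two and depth-three evaluations of Theorems~\ref{thm:FMZVdepth2superbity1} and \ref{thm:FMZVdepth3superbity1} turns it into a nontrivial $\Q$-polynomial relation among the $\calA_1$-Bernoulli numbers $\gb_{2k+1}$, which Conjecture~\ref{conj:A_1BernoulliAlgIndpt} forbids; I would try to make this reduction uniform across all weights via the Kaneko--Zagier isomorphism. The dimension bookkeeping matches exactly: with $P(t)=1/(1-t^2-t^3)$ the conjectural Hilbert series of $\MZV_\bullet$, one has
\begin{equation*}
\sum_{w\ge0} d_w\,t^w=\frac{1-t^2}{1-t^2-t^3}=(1-t^2)\,P(t),
\end{equation*}
which is precisely the Hilbert series of $\MZV_\bullet/\zeta(2)\MZV_{\bullet-2}$ and is equivalent to the recursion in (i). This is the finite analogue of Zagier's conjecture, provable at best conditionally---on algebraic independence of periods in the spirit of Deligne \cite{Deligne2010}, transported through the Kaneko--Zagier comparison.

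Finally, (iii) is not logically separate: the self-contained spanning argument shows the double shuffle relations already force $\dim\FMZ_w\le d_w$, so once the Hoffman elements are known to be independent no relation can escape the double shuffle ideal, and the relations are complete. I would therefore establish (iii) in tandem with the independence input rather than on its own. In sum, the spanning direction and the reduction to Conjecture~\ref{conj:A_1BernoulliAlgIndpt} are accessible with the machinery assembled here, while the independence itself remains genuinely open.
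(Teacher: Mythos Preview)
The statement you are attempting is labeled a \emph{Conjecture} in the paper and is left open; there is no proof to compare against. What the paper actually establishes is only the small-weight theorem immediately following it: for $3\le w\le 13$ the Hoffman-type set spans $\FMZ_w$ and the double shuffle relations suffice, verified by a Maple computation. Your write-up correctly flags that the linear-independence half is out of reach, so on that point you and the paper agree.

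Where your proposal overreaches is in asserting that the \emph{spanning} direction and the reduction of independence to Conjecture~\ref{conj:A_1BernoulliAlgIndpt} are ``accessible with the machinery assembled here.'' Neither is. Your ``cleaner route'' via the Kaneko--Zagier isomorphism is circular: that isomorphism is itself Conjecture~\ref{conj:KanekoZagier}, and the paper stresses that even its well-definedness is unknown (e.g.\ one cannot currently show $\gb_5\ne 0$). Your ``self-contained route''---feeding the stuffle, shuffle, reversal, and concatenation relations into a triangular elimination---is exactly what the paper does \emph{computationally} weight by weight, but no uniform argument is given or known; a genuine proof would require an analogue of Brown's motivic machinery on the finite side, which does not exist. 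Likewise, the claim that any dependence among the $(1,2,a_1,\dots,a_r)$ elements unwinds to a polynomial relation among the $\gb_{2k+1}$ is plausible in low weight from the explicit tables, but you give no mechanism for this in general, and the paper makes no such claim.

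In short: your outline is a reasonable heuristic roadmap, and you are right that (ii) implies (i) by the Padovan count, but you should not present either half of (ii), nor (iii), as something the present paper's tools can deliver. The honest status is that all three parts remain conjectural beyond the computer-verified range.
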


In the spring of 2013, after the author gave an Ober-Seminar
talk at the Max Planck Institute for Mathematics at Bonn,
Prof.\ D.\ Zagier mentioned that he had come to the dimension part
of Conjecture~\ref{conj:FMZVsuperbity1} some time earlier \cite{Zagier2011}.
In fact, Kaneko and Zagier proposed the following more
precise relation between FMZVs and MZVs.
\begin{conj}\label{conj:KanekoZagier}
There is an $\Q$-algebra isomorphism
\begin{align*}
f_{\rm KZ}:\FMZ_{w,1}  & \longrightarrow \MZV_w/\zeta(2)\MZV_{w-2}\\
\zeta_{\calA_1}(\bfs)& \longmapsto \zeta_\sha^\Sy(\bfs)
\end{align*}
where for $\bfs=(s_1, \ldots, s_d)$, the symmetrized MZVs
\begin{align}
\zeta_\sha^\Sy(\bfs)=&\, \sum_{i=0}^d
 (-1)^{s_1+\cdots+s_i} \zeta_\sha(s_i,\dots,s_1) \zeta_\sha(s_{i+1},\dots,s_d), \label{equ:SMZVsha}\\
\zeta_\ast^\Sy(\bfs)=&\, \sum_{i=0}^d
 (-1)^{s_1+\cdots+s_i} \zeta_\ast(s_i,\dots,s_1) \zeta_\ast(s_{i+1},\dots,s_d),  \label{equ:SMZVast}
\end{align}
where $\zeta_\sha$ and $\zeta_\ast$ on the right-hand side are
given by Theorem~\ref{thm:2wayReg}.
\end{conj}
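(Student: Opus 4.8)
This is the Kaneko--Zagier conjecture, so instead of a complete argument I describe the shape a proof would take and pinpoint the essential obstruction. The plan is to factor both sides through one common ``double shuffle'' quotient. Fix a weight $w$ and let $\DSh_w$ be the $\Q$-vector space spanned by the words $W(\bfs)$, $\bfs\in\N^d$, $|\bfs|=w$, modulo all the double shuffle relations of FMZVs of Definition~\ref{defn:FESdbsf}. I would try to show that both assignments $\bfs\mapsto\zeta_{\calA_1}(\bfs)$ and $\bfs\mapsto\zeta_\sha^\Sy(\bfs)$ factor through $\DSh_w$, producing surjections $\DSh_w\twoheadrightarrow\FMZ_w$ and $\DSh_w\twoheadrightarrow\MZV_w/\zeta(2)\MZV_{w-2}$, and that $f_{\rm KZ}$ is the induced comparison between their targets.

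First I would establish that $f_{\rm KZ}$ is a well-defined algebra homomorphism. The homomorphism property on the stuffle side is Proposition~\ref{prop:zetaStuffle} for the source, while on the target the symmetrized values $\zeta_\ast^\Sy$ and $\zeta_\sha^\Sy$ are known by Jarossay~\cite{Jarossay2014} to satisfy the parallel stuffle and shuffle relations and to coincide modulo $\zeta(2)\MZV_{w-2}$; this last coincidence is exactly what makes the target independent of the choice of regularization in Theorem~\ref{thm:2wayReg}, mirroring the fact that FMZVs need no separate ``star'' theory. For well-definedness I would check that each of the generating relations among the $\zeta_{\calA_1}(\bfs)$ is sent into $\zeta(2)\MZV_{w-2}$: the linear stuffle relations go to stuffle relations of $\zeta^\Sy$, the shuffle relations of Theorem~\ref{thm:FESshuffle} go to the corresponding shuffle identities, and the reversal relation \eqref{equ:reversal} matches term-by-term the alternating symmetrization in \eqref{equ:SMZVsha}. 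Granting Conjecture~\ref{conj:FMZVsuperbity1}(iii), that these double shuffle relations exhaust all $\Q$-linear relations among FMZVs, this yields a well-defined surjective $\Q$-algebra map.

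For surjectivity I would invoke Brown's theorem from the introduction together with the spanning of $\MZV_w/\zeta(2)\MZV_{w-2}$ by the symmetric MZVs, so that the images of the proposed $\FMZ_w$-basis of Conjecture~\ref{conj:FMZVsuperbity1}(ii) exhaust the target. The decisive and genuinely hard step is injectivity. Since the comparison map is already surjective between finite-dimensional spaces, injectivity is equivalent to the dimension equality $\dim_\Q\FMZ_w=\dim_\Q\bigl(\MZV_w/\zeta(2)\MZV_{w-2}\bigr)$. Both dimensions are only conjectural: the right side reduces to Zagier's conjecture for $\dim\MZV_w$ (known merely as an upper bound via Brown), and the left side is Conjecture~\ref{conj:FMZVsuperbity1}(i), whose proof would demand new transcendence-type input in $\calA_1$ of the sort formalized in Conjecture~\ref{conj:A_1BernoulliAlgIndpt}. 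Reassuringly the two conjectural counts agree --- both generating series equal $\tfrac{1-t^2}{1-t^2-t^3}$ --- but establishing either unconditionally is out of reach, which is exactly why the statement remains a conjecture; the realistic deliverable is the conditional theorem that $f_{\rm KZ}$ is an isomorphism once all relations on each side are known to come from double shuffle.
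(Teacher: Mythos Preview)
The statement is labelled a \emph{conjecture} in the paper, and the paper does not prove it; it only records partial evidence and explains why a proof is currently out of reach. Your proposal is therefore not being compared against a proof but against the paper's own discussion of the obstructions, and in that respect your outline is broadly sound and in the same spirit.

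A few points of comparison. For the spanning of $\MZV_w/\zeta(2)\MZV_{w-2}$ by symmetrized MZVs you appeal to Brown's theorem plus an unnamed spanning result; the paper attributes this precisely to Yasuda (Theorem~\ref{thm:SMZVgenerateMZV}). You are right that Jarossay supplies the double-shuffle compatibility of the $\zeta_\sharp^\Sy$ and the congruence $\zeta_\sha^\Sy\equiv\zeta_\ast^\Sy\pmod{\zeta(2)\MZV_{w-2}}$; the paper records this as Proposition~\ref{prop:Fait1.8}. Where your emphasis differs from the paper's is in the analysis of the hard step. You reduce well-definedness to Conjecture~\ref{conj:FMZVsuperbity1}(iii) and injectivity to the dimension count, which is a clean conditional formulation. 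The paper instead makes the obstruction concrete already in weight~$5$: since $\zeta_{\calA_1}(4,1)=5\gb_5$ while $\zeta_\sha^\Sy(4,1)=5\zeta(5)-2\zeta(2)\zeta(3)$, the conjectured isomorphism forces both $\gb_5\ne 0$ (infinitely many primes $p$ with $B_{p-5}\not\equiv 0\pmod p$) and the irrationality of $\zeta(2)\zeta(3)/\zeta(5)$, neither of which is known. This example sharpens your remark that ``transcendence-type input in $\calA_1$'' is needed, and it shows that even the \emph{well-definedness} direction is not accessible without something like Conjecture~\ref{conj:A_1BernoulliAlgIndpt}. So your conditional theorem at the end is the right summary, but be aware that the paper does not claim even that much unconditionally.
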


The following results are contained in \cite{Jarossay2014}
(see Remarque 1.3 and Fait 1.8).
\begin{prop}\label{prop:Fait1.8}
For all composition $\bfs$ of positive integers,
\begin{itemize}
  \item $\zeta_\sha^\Sy(\bfs)$ and $\zeta_\ast^\Sy(\bfs)$ are all finite, and
  \item $\zeta_\sha^\Sy(\bfs)-\zeta_\ast^\Sy(\bfs)\in\zeta(2)\MZV_{w-2}$ for all $|\bfs|=w$.
\end{itemize}
\end{prop}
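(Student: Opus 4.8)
The plan is to exploit that the regularization of Theorem~\ref{thm:2wayReg} turns $\zeta_\sha$ and $\zeta_\ast$ into genuine algebra homomorphisms on $(\fA_1^1,\sha)$ and $(\fA_1^1,\ast)$, so that each symmetric value becomes the regularized value of a \emph{single} element of $\fA_1^1$. Writing a splitting of the composition as $\bfs=\bfa\bfb$ (so $W(\bfs)=W(\bfa)W(\bfb)$), the homomorphism property converts each product $\zeta_\sha(\tla{\bfa})\zeta_\sha(\bfb)$ into a shuffle and gives
\[
\zeta_\sha^\Sy(\bfs)=\zeta_\sha\big(\Phi_\sha(\bfs)\big),\qquad
\Phi_\sha(\bfs):=\sum_{\bfs=\bfa\bfb}(-1)^{|\bfa|}\,W(\tla{\bfa})\sha W(\bfb)\in\fA_1^1,
\]
and similarly $\zeta_\ast^\Sy(\bfs)=\zeta_\ast(\Phi_\ast(\bfs))$ with $\sha$ replaced by the stuffle $\ast$.

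For finiteness I would invoke Radford's theorem: $(\fA_1^1,\sha)$ is the polynomial algebra $(\fA_1^0,\sha)[\x_1]$, so every element writes uniquely as $\sum_{k\ge0}u_k\sha\x_1^{\sha k}$ with $u_k\in\fA_1^0$, whence $\zeta_\sha(\cdot)=\sum_k\zeta(u_k)T^k$. Thus $\zeta_\sha^\Sy(\bfs)$ is finite precisely when $\Phi_\sha(\bfs)\in\fA_1^0$, i.e. when the coefficient of every word beginning with $\x_1$ cancels. To see this I would package the values into the series $\frakI=\sum_{\bfw}\zeta_\sha(\bfw)\,\bfw$ (with $\zeta_\sha(\x_0)=0$), which is group-like for the coproduct dual to $\sha$ by Theorem~\ref{thm:2wayReg}(ii); its concatenation inverse is its antipode, namely signed word-reversal. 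The generating series $\sum_{\bfs}\zeta_\sha^\Sy(\bfs)W(\bfs)$ then factors as the product of $\frakI$ (restricted to $\fA_1^1$) with a reversed, sign-twisted copy of itself. For the stuffle side the same scheme applies using the antipode of $\QSym_2$ from Theorem~\ref{thm:HoffmanAntipodeS}, whose description through the composition-reversal $\tla{\bfa}$ matches the reversal appearing in $\Phi_\ast$.

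The main obstacle is exactly that $W(\tla{\bfa})$ is a block (composition) reversal whereas the shuffle antipode is a full word-reversal; consequently the symmetric series is \emph{not} literally $\frakI^{-1}\frakI$ (which would vanish), and the whole point is that the mismatch contributes only to the admissible part $\fA_1^0$. Concretely I would track the coefficient of each $\x_1$-leading word in $\Phi_\sha(\bfs)$ and exhibit a sign-reversing pairing of the splittings $\bfs=\bfa\bfb$, guided by the reversal symmetry $\zeta_\sha^\Sy(\tla{\bfs})=(-1)^{|\bfs|}\zeta_\sha^\Sy(\bfs)$ --- the direct analogue of Theorem~\ref{thm:reversal}, which I would establish first. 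This cancellation, mirroring the van Hamme/reversal behaviour of the finite values, is where the real work lies.

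For the congruence, once both values are finite I would compare them through $\zeta_\sha=\rho\circ\zeta_\ast\circ\bfp=\rho\circ\zeta_\ast$ on $\fA_1^1$ (Theorem~\ref{thm:2wayReg}; $\bfp=\id$ at level one), where $\rho=\exp\bigl(\sum_{n\ge2}\tfrac{(-1)^n}{n}\zeta(n)\partial_T^n\bigr)$ acts on $\R[T]$, so that $\rho-\id=\tfrac{\zeta(2)}2\partial_T^2+\cdots$ with every coefficient a product of values $\zeta(n)$, $n\ge2$. Substituting $\zeta_\sha(\cdot)=\rho(\zeta_\ast(\cdot))$ into the bilinear symmetric sum and using finiteness (so the outcome is $T$-independent), the difference $\zeta_\sha^\Sy(\bfs)-\zeta_\ast^\Sy(\bfs)$ becomes a sum of corrections, each carrying a factor $\zeta(n)$ times a weight $w-n$ MZV (weight $w=|\bfs|$). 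Reducing modulo $\zeta(2)\MZV_{w-2}$ annihilates all even $\zeta(2k)\in\Q\,\zeta(2)^k$; the surviving odd-weight corrections cancel in pairs under the symmetry $i\mapsto d-i$ of the symmetrization (equivalently by the reversal relation above), giving $\zeta_\sha^\Sy(\bfs)-\zeta_\ast^\Sy(\bfs)\in\zeta(2)\MZV_{w-2}$. Verifying this last odd-zeta cancellation is the second technical point of the proof.
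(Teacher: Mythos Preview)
The paper does not give its own proof of this proposition: it is stated as a result ``contained in \cite{Jarossay2014} (see Remarque~1.3 and Fait~1.8)'' and is quoted without argument. So there is no in-paper proof to compare your proposal against.

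That said, your outline is broadly the right strategy and is in the spirit of Jarossay's argument: use the algebra-homomorphism property of $\zeta_\sha$ and $\zeta_\ast$ (Theorem~\ref{thm:2wayReg}) to collapse the symmetrized sum to a single regularized value, then show the resulting element lies in the convergent subalgebra so that no $T$-dependence survives; finally use the comparison $\zeta_\sha=\rho\circ\zeta_\ast$ to control the difference modulo $\zeta(2)$. Two caveats, both of which you already flag as ``where the real work lies'': (i) the cancellation of $\x_1$-leading words in $\Phi_\sha(\bfs)$ is the heart of finiteness and your pairing is only asserted, not constructed --- the clean way is to observe that moving the split point across a single letter $\x_0^{s_{i+1}-1}\x_1$ produces the same shuffle up to the sign $(-1)^{s_{i+1}}$, so consecutive terms differ only in the admissible part; (ii) for the congruence, your claim that the odd-$\zeta$ corrections from $\rho$ cancel under $i\mapsto d-i$ is not automatic, since $\rho$ acts on each factor separately and the bilinear expansion mixes degrees --- one really needs that the finiteness forces the $T$-coefficients of $\zeta_\ast$ on both factors to vanish in the right combination, after which only the $\zeta(2k)$ pieces of $\rho$ contribute nontrivially. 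As written your proposal is a plausible plan with the two genuine technical steps left open, which matches the fact that the paper itself defers the details to \cite{Jarossay2014}.
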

Therefore, one may also replace $\zeta_\ast$ by $\zeta_\sha$ in
Conjecture~\ref{conj:KanekoZagier}. Further, the conjectured map
is surjective according to the next theorem proved by
Yasuda \cite{Yasuda2014}.

\begin{thm} \label{thm:SMZVgenerateMZV}
Let $\sharp=\sha$ or $\ast$. Then the space $\MZV_w$ is
generated by symmetrized MZVs
$\big\{\zeta^\Sy_\sharp(\bfs): |\bfs|=w\big\}$.
\end{thm}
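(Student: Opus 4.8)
The plan is to prove the two inclusions separately. The inclusion $\mathrm{span}_\Q\{\zeta^\Sy_\sharp(\bfs):|\bfs|=w\}\subseteq\MZV_w$ is the easy one: by Proposition~\ref{prop:Fait1.8} each $\zeta^\Sy_\sharp(\bfs)$ is finite and is a $\Q$-linear combination of products of regularized MZVs of total weight $w$, hence lies in $\MZV_w$. For the reverse inclusion I would work inside the graded algebra $\calZ:=\bigoplus_{w\ge0}\MZV_w$ and exploit its structure as a connected graded commutative $\Q$-algebra.

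The key structural input I would use is that the symmetrized MZVs form a graded subalgebra. After Kaneko--Zagier one shows that $\zeta^\Sy_\ast$ (resp.\ $\zeta^\Sy_\sha$) turns the stuffle (resp.\ shuffle) product into ordinary multiplication, so that for $|\bfa|=a$ and $|\bfb|=b$ the product $\zeta^\Sy_\sharp(\bfa)\,\zeta^\Sy_\sharp(\bfb)$ is a $\Q$-linear combination of values $\zeta^\Sy_\sharp(\bfc)$ with $|\bfc|=a+b$. Writing $\calZ^\Sy_w:=\mathrm{span}_\Q\{\zeta^\Sy_\sharp(\bfs):|\bfs|=w\}$, this makes $\calZ^\Sy:=\bigoplus_w\calZ^\Sy_w$ a graded subalgebra of $\calZ$. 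Granting this, the theorem reduces by a standard induction on the weight to a single statement about indecomposables: it suffices to prove that $\calZ^\Sy_w$ surjects onto the quotient $\MZV_w\big/\sum_{0<i<w}\MZV_i\,\MZV_{w-i}$ for every $w$. Indeed, if $\MZV_i=\calZ^\Sy_i$ for all $i<w$, then every decomposable element of weight $w$ is a sum of products $\zeta(\bfa)\zeta(\bfb)\in\calZ^\Sy_a\,\calZ^\Sy_b\subseteq\calZ^\Sy_w$, and surjectivity onto indecomposables then forces $\MZV_w=\calZ^\Sy_w$. This induction also shows independence of the choice $\sharp=\sha$ or $\ast$, since by Proposition~\ref{prop:Fait1.8} the two families agree modulo $\zeta(2)\MZV_{w-2}=\tfrac12\zeta^\Sy_\sharp(2)\,\MZV_{w-2}\subseteq\calZ^\Sy_w$.

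To prove surjectivity onto indecomposables I would expand the definition. For $\bfs$ of depth $d=\dep(\bfs)$ and weight $w=|\bfs|$, in $\zeta^\Sy_\sha(\bfs)=\sum_{i=0}^{d}(-1)^{s_1+\cdots+s_i}\zeta_\sha(s_i,\dots,s_1)\zeta_\sha(s_{i+1},\dots,s_d)$ every term with $0<i<d$ is a product of two positive-weight regularized values, hence decomposable; so modulo decomposables only the two outer terms survive, giving $\zeta^\Sy_\sha(\bfs)\equiv\zeta_\sha(\bfs)+(-1)^{w}\zeta_\sha(\revs{\bfs})$. Thus, on the indecomposable quotient, symmetrization acts by the operator $\mathrm{id}+(-1)^{w}R$, where $R$ is induced by the composition--reversal $\bfs\mapsto\revs{\bfs}$ of shuffle--regularized MZVs. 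Since the ordinary classes $[\zeta_\sha(\bfs)]$ already span the quotient (by Brown's theorem the Hoffman classes do), the required surjectivity is equivalent to invertibility of $\mathrm{id}+(-1)^{w}R$ on this finite-dimensional space, which I would establish from the explicit action of reversal on the motivic coproduct of MZVs.

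The main obstacle is exactly this last point. Composition reversal is \emph{not} word reversal, and its interaction with shuffle regularization is delicate: it already fails to descend to an involution on the quotient. For instance, in weight $3$ one computes $\zeta^\Sy(3)=0$ yet $\zeta^\Sy(2,1)=3\zeta(3)$, so depth $1$ is annihilated while depth $2$ compensates. Consequently $\mathrm{id}+(-1)^{w}R$ cannot be analysed by eigenvalue bookkeeping alone, and the $T$-dependent parts of individual regularized terms must be tracked carefully (Proposition~\ref{prop:Fait1.8} guarantees only that they cancel in the \emph{total} sum). The real work is therefore to pin down $R$ well enough --- via a depth filtration making the operator triangular, or via the $f$-alphabet description of the motivic Lie coalgebra --- to show it has trivial kernel in every weight. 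Once that is in hand, the structural reduction above closes the argument for both $\sharp=\sha$ and $\sharp=\ast$.
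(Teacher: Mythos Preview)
The paper does not supply its own proof of this theorem; it is quoted as a result of Yasuda \cite{Yasuda2014} and used as external input. So there is no in-paper argument to compare your proposal against.

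That said, your proposal is not a proof but an outline with a self-identified gap, and the gap is the entire content of the theorem. Your reduction to indecomposables is reasonable: granting that $\zeta^\Sy_\sharp$ is multiplicative for $\sharp$, the span $\calZ^\Sy$ is a graded subalgebra of $\calZ$, and an induction on weight reduces the statement to surjectivity of $\bfs\mapsto[\zeta^\Sy_\sharp(\bfs)]$ onto $\MZV_w/\sum_{0<i<w}\MZV_i\MZV_{w-i}$. But the claim that this surjectivity is ``equivalent to invertibility of $\mathrm{id}+(-1)^wR$'' is not well-posed: as you yourself note, composition reversal does not descend to the indecomposable quotient, the $T$-terms of the two outer summands do not cancel on their own, and the single-depth example $\zeta^\Sy(w)=0$ for odd $w$ already shows that no naive operator argument can work. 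Your final paragraph concedes all of this and defers the actual argument to an unspecified analysis ``via a depth filtration'' or ``via the $f$-alphabet'', neither of which you carry out.

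In short, the structural scaffolding (subalgebra, induction, reduction to indecomposables) is sound, but the step that carries the weight --- producing each indecomposable MZV class as an explicit $\Q$-combination of symmetrized values --- is exactly what Yasuda proves and exactly what your proposal leaves open. Until that step is supplied, the proposal does not establish the theorem.
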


However, even in weight 5, it seems
impossible to prove the map $f_{\rm KZ}$ is well defined and injective
at the moment.Indeed, we have
$$\zeta_{\calA_1}(4,1)=5\gb_5=(B_{p-5})_p.$$
But we know $\zeta_\sha^\Sy(4,1)=5\zeta(5)-2\zeta(2)\zeta(3)$
is conjecturally nonzero on the right-hand side so that if
$f_{\rm KZ}$ is well defined then $\gb_5\ne 0$ which
would imply that $B_{p-5}\not\equiv 0\pmod{p}$
for infinitely many primes $p$, a statement far from proved.
On the other hand, even if we could prove $\gb_5\ne 0$ by
some other means, we still don't know whether
$\zeta(2)\zeta(3)/\zeta(5)\in\Q$ is true or not,
thus we still don't know whether $f_{\rm KZ}$ is injective.

As a further support of Conjecture~\ref{conj:FMZVsuperbity1},
\begin{thm}
For all $3\le w\le 13$, we have
\begin{align*}
 \FMZ_w&\, =\big\langle \zeta_{\calA_1}(1,2,a_1,\dots,a_r):
    a_1,\dots,a_r\in\{2,3\}, a_1+\cdots+a_r=w-3 \big\rangle.
\end{align*}
Moreover, all the the relations in these weights
can be proved by the double shuffle relations
of FMZVs defined as in Definition~\ref{defn:FESdbsf}.
\end{thm}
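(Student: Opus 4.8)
The plan is to establish both assertions by a finite, exact linear-algebra computation over $\Q$, carried out one weight at a time for $3\le w\le 13$. Fix such a $w$ and let $V_w$ be the $\Q$-vector space freely spanned by the $2^{w-1}$ symbols $\zeta_{\calA_1}(\bfs)$, one for each composition $\bfs$ of $w$ into positive parts. There is an evaluation surjection $\pi\colon V_w\to\FMZ_w$ whose kernel $\ker\pi$ is exactly the space of all $\Q$-linear relations among the weight-$w$ FMZVs. Inside $V_w$ I would build the subspace $\calR_w$ spanned by all double shuffle relations of weight $w$ in the sense of Definition~\ref{defn:FESdbsf}. Since each such relation is a theorem---the shuffle part by Theorem~\ref{thm:FESshuffle} and the stuffle part by Proposition~\ref{prop:zetaStuffle}---we have $\calR_w\subseteq\ker\pi$, and the whole problem reduces to row-reducing the matrix whose rows span $\calR_w$.

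Generating $\calR_w$ has two ingredients. For the linear stuffle relations I would follow Definition~\ref{defn:FESstuffle} inductively on the weight: starting from the relations already produced in weights $<w$ (the base case $\zeta_{\calA_1}(1)=0$ being the instance $s=1$, $\bfu=\bfv=\emptyset$ of Theorem~\ref{thm:FESshuffle}(iii)), multiply each by every $\zeta_{\calA_1}(\bfv)$ of complementary weight and expand the products by the stuffle rule \eqref{equ:worFormStuff}. For the linear shuffle relations I would instantiate Theorem~\ref{thm:FESshuffle}(iii) on all admissible inputs. Here care is needed: the map $\zeta_{\calA_1,\sha}=\zeta_{\calA_1,\ast}\circ\bfp$ passes through the sign-twisting map $\bfp$, so shuffling a positive word against a signed word produces a genuine finite Euler sum rather than an FMZV. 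To obtain relations purely among FMZVs I would take both shuffle factors in $\setY_1^\ast$, for which $\bfp$ acts as the identity and $\tau$ merely reverses a word and attaches the sign $(-1)^{|\bfw|}$; this yields honest $\Q$-linear relations among weight-$w$ FMZVs.

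Once $\calR_w$ is assembled I would compute $r_w:=\dim(V_w/\calR_w)=2^{w-1}-\rank\calR_w$ by exact Gaussian elimination over $\Q$, ordering the columns so that the non-Hoffman symbols are taken as pivots. I expect the echelon form to express every $\zeta_{\calA_1}(\bfs)$ as a $\Q$-combination of the Hoffman elements $\zeta_{\calA_1}(1,2,a_1,\dots,a_r)$ with $a_i\in\{2,3\}$ and $a_1+\cdots+a_r=w-3$; since the relations used are valid in $\calA_1$, this already gives the nontrivial inclusion $\FMZ_w\subseteq\langle\text{Hoffman elements}\rangle$ (the reverse being trivial) and hence the first assertion. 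I would then verify that $r_w$ equals the value $d_w$ of Conjecture~\ref{conj:FMZVsuperbity1}(i) (so $d_3=1$, $d_4=0$, and $d_w=d_{w-2}+d_{w-3}$ thereafter); as the number of Hoffman elements obeys the same recursion, there are exactly $d_w$ of them, and the echelon form already exhibits them as a spanning set of $V_w/\calR_w$, so they automatically form a basis of this quotient. For the ``moreover'' clause I would independently evaluate the FMZVs in $\calA_1$ by computing $H_{p-1}(\bfs)\bmod p$ over a large batch of primes and confirm that the $d_w$ Hoffman values admit no $\Q$-linear relation up to a large height bound, certifying $\dim\FMZ_w=d_w$. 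Together with $\calR_w\subseteq\ker\pi$ and the common codimension $2^{w-1}-d_w$, this forces $\calR_w=\ker\pi$, i.e.\ every relation among weight-$w$ FMZVs is a double-shuffle consequence.

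The main obstacle is neither conceptual nor the sheer size of the final system---at $w=13$ one has $2^{12}=4096$ symbols, large but entirely routine over $\Q$---but rather completeness and bookkeeping in the relation-generation step. One must guarantee that the inductive stuffle construction of Definition~\ref{defn:FESstuffle} has been propagated through every lower weight without omission, and that the sign conventions carried by $\bfp$ and $\tau$ in Theorem~\ref{thm:FESshuffle} are tracked correctly when shuffle relations are restricted from the Euler-sum algebra $\fA_2^1$ back to the FMZVs. A secondary point worth flagging is that the equality $\FMZ_w=\langle\text{Hoffman elements}\rangle$ is unconditional, resting only on validity of the relations, whereas the independence of these generators in $\calA_1$---equivalently $\dim\FMZ_w=d_w$ exactly---relies on the numerical certification above, in keeping with the cautionary remarks following Conjecture~\ref{conj:KanekoZagier} about unproven non-vanishing statements such as $\gb_5\neq0$.
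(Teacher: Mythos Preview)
Your proposal is correct and matches the paper's own approach, which is simply ``This theorem can be proved with the help of Maple.'' You have supplied the details of that computation---generating the shuffle relations from Theorem~\ref{thm:FESshuffle} restricted to $\setY_1^\ast$, propagating stuffle relations inductively as in Definition~\ref{defn:FESstuffle}, and row-reducing over~$\Q$---and you correctly flag that the spanning assertion is unconditional while the claim that double shuffle exhausts \emph{all} relations hinges on the (only numerically supported) independence of the Hoffman elements in~$\calA_1$, consistent with the paper's own caveat about $\gb_5\ne 0$.
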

\begin{proof}
This theorem can be proved with the help of Maple. So we leave
it to the interested  reader.
\end{proof}

We can also obtain the following detailed results by Maple computation.
Note that all depth one or two values are given by
Theorems~\ref{thm:homogeneousWols1} and \ref{thm:FMZVdepth2superbity1}.
All FMZVs of weight 6
and depth at least 4 can be computed from the next
theorem by the duality relations in Theorem~\ref{thm:dualFMZVsuperb=1}.

\begin{thm}\label{thm:wt6FMZV}
The FMZVs of superbity $1$ in depth 3 and weight 6
are given by Table~\ref{Table:wt6FMZV}
(or can be obtained by the reversal relations). Moreover,
for all the values in the table,
$\zeta^\star_{\calA_1}=-\zeta_{\calA_1}$ if weight and depth
has the same parity while $\zeta^\star_{\calA_1}=\zeta_{\calA_1}$ otherwise.
\begin{table}[!h]
{
\begin{center}
\begin{tabular}{  ||c|c||c|c||c|c|| } \hline
$\zeta_{\calA_1}(1,3,2)$ &$-{\small \frac{9}{2}}\gbb_{3}^2$ &
$\zeta_{\calA_1}(1,4,1)$ &  $ 3\gbb_{3}^2$ &
$\zeta_{\calA_1}(1,1,4)$ & $-{\small \frac{3}{2}}\gbb_{3}^2$\\ \hline
$\zeta_{\calA_1}(2,1,3)$ &  $ {\small \frac{3}{2}}\gbb_{3}^2$ &
$\zeta_{\calA_1}(2,3,1)$ & $-{\small \frac{9}{2}}\gbb_{3}^2$ &
$\zeta_{\calA_1}(3,1,2)$ & $ {\small \frac{3}{2}}\gbb_{3}^2$ \\ \hline
$\zeta_{\calA_1}(3,2,1)$ & $ 3\gbb_{3}^2$ &
$\zeta_{\calA_1}(4,1,1)$ & $-{\small \frac{3}{2}}\gbb_{3}^2$ &
$\zeta_{\calA_1}(1,2,3)$ &  $ 3\gbb_{3}^2$\\ \hline
\end{tabular}
\end{center}
}
\caption{FMZVs of superbity 1, depth 3 and weight 6.}
\label{Table:wt6FMZV}
\end{table}
\end{thm}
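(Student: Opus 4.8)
The plan is to pin down all ten depth-$3$, weight-$6$ positive compositions by the double shuffle relations together with the reversal relations, exactly in the spirit of the preceding theorem verifying Conjecture~\ref{conj:FMZVsuperbity1} for $3\le w\le 13$. First I would dispose of the trivial entries and the symmetries. The composition $(2,2,2)$ gives $\zeta_{\calA_1}(\{2\}^3)=0$ by Theorem~\ref{thm:homogeneousWols1}, since the weight $6$ is even, which is why it is absent from Table~\ref{Table:wt6FMZV}. As the weight is even and all signs are $+1$, the reversal relation \eqref{equ:reversal} of Theorem~\ref{thm:reversal} reads $\zeta_{\calA_1}(s_3,s_2,s_1)=\zeta_{\calA_1}(s_1,s_2,s_3)$, which collapses the remaining nine values into the five reversal-classes displayed in the table. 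Moreover every proper sub-weight datum vanishes except at weight $3$: all depth-$1$ and depth-$2$ weight-$6$ values are zero because $\zeta_{\calA_1}(6)=0$ and, by \eqref{equ:depth2FMZV}, $\zeta_{\calA_1}(s,t)=(-1)^s\binom{6}{s}\gb_6=0$ as $\gb_6=0$, while $\zeta_{\calA_1}(1,1,1)=0$ by reversal; the only surviving weight-$3$ data are $\zeta_{\calA_1}(2,1)=3\gb_3$ and $\zeta_{\calA_1}(1,2)=-3\gb_3$ from Theorem~\ref{thm:FMZVdepth2superbity1}. Since the preceding theorem shows $\FMZ_6$ is spanned by the single element $\zeta_{\calA_1}(1,2,3)$, so $\dim_\Q\FMZ_6\le1$, each entry is a priori a rational multiple of one generator, and only the coefficients remain.

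Second I would assemble the weight-$6$ linear system. The scale is injected purely through the stuffle homomorphism of Proposition~\ref{prop:zetaStuffle}: expanding the three products $\zeta_{\calA_1}(2,1)^2$, $\zeta_{\calA_1}(2,1)\zeta_{\calA_1}(1,2)$ and $\zeta_{\calA_1}(1,2)^2$ by the stuffle rule \eqref{equ:worFormStuff} equates $9\gb_3^2$, $-9\gb_3^2$ and $9\gb_3^2$ to sums of weight-$6$ values of depths $2,3,4$; the depth-$2$ terms vanish, so these relations couple the depth-$3$ and depth-$4$ unknowns to $\gb_3^2$. For instance $\zeta_{\calA_1}(2,1)^2$ yields $9\gb_3^2=2\zeta_{\calA_1}(2,1,2,1)+4\zeta_{\calA_1}(2,2,1,1)+2\zeta_{\calA_1}(2,3,1)+2\zeta_{\calA_1}(4,1,1)$. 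The remaining relations come from Theorem~\ref{thm:FESshuffle}: on positive words $\bfp$ is the identity, so $\zeta_{\calA_1,\sha}=\zeta_{\calA_1}$ there, and part~(i) becomes $\zeta_{\calA_1}(\bfu\sha\bfv)=\zeta_{\calA_1}(\tau(\bfu)\bfv)$, where $\tau$ only reverses $\bfu$ up to the sign $(-1)^{|\bfu|}$ and the shuffle preserves total depth. Taking $\bfu$ of depth $1$ and $\bfv$ of depth $2$ therefore gives homogeneous relations among the depth-$3$ values, and $\bfu,\bfv$ of depth $2$ gives homogeneous relations among the depth-$4$ values. With the reversal identifications already built in, this is a finite $\Q$-linear system in the five depth-$3$ and six depth-$4$ reversal-classes.

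Third I would solve the system and read off the answer. Solving over $\Q$ expresses every weight-$6$ value as an explicit rational multiple of $\gb_3^2$; extracting the depth-$3$ entries reproduces Table~\ref{Table:wt6FMZV}, and the depth-$4$ values obtained along the way feed the duality computation of the higher-depth weight-$6$ values mentioned before the theorem. The final assertion on star values is then immediate from the concatenation relation \eqref{equ:FMZVSH} of Theorem~\ref{thm:Hoff}: for depth-$3$ $\bfs$ one has $\zeta^\star_{\calA_1}(\bfs)=\sum_{\bft\preceq\bfs}\zeta_{\calA_1}(\bft)$, and every $\bft\prec\bfs$ has weight $6$ and depth $\le2$, hence $\zeta_{\calA_1}(\bft)=0$; only $\bft=\bfs$ survives, giving $\zeta^\star_{\calA_1}(\bfs)=\zeta_{\calA_1}(\bfs)$, which matches the stated parity rule since here the weight $6$ and depth $3$ have opposite parity.

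The hard part is not any single calculation but the coupling of depth $3$ with depth $4$. Because the shuffle relations preserve depth on positive words, they deliver only homogeneous relations inside each depth and can never by themselves fix the overall $\gb_3^2$ scale; that scale enters solely through the stuffle products, which unavoidably mix depth-$4$ values into every depth-$3$ relation. Consequently the depth-$3$ values cannot be isolated, and one must solve the coupled depth-$3$/depth-$4$ system at once; the genuine content is the rank check that the double shuffle and reversal relations span enough of weight $6$ to determine all eleven classes---precisely the explicit linear-algebra verification that the statement relegates to a machine computation. One should also confirm that no competing weight-$6$ quantity (for example a product of $\gb_5$ with a weight-$1$ value) can intrude, which it cannot, since every weight-$1$ positive value vanishes and $\gb_6=0$, so the target space is genuinely one-dimensional over $\Q\,\gb_3^2$.
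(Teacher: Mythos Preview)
Your proposal is correct and follows essentially the same approach as the paper: the paper states only that Theorems~\ref{thm:wt6FMZV} and~\ref{thm:wt7FMZV} are obtained ``by Maple computation'' using the double shuffle relations of Definition~\ref{defn:FESdbsf}, and what you have written is precisely a hand-outline of that computation---stuffle products of weight-$3$ data to inject the $\gb_3^2$ scale, the linear shuffle relations of Theorem~\ref{thm:FESshuffle} (which on positive words subsume the reversal relation, as you implicitly use), and a rank check on the coupled depth-$3$/depth-$4$ system. Your observation that the shuffle relations are depth-homogeneous on positive words while only the stuffle products carry the nonzero constant, forcing one to solve depths $3$ and $4$ simultaneously, is exactly the mechanism behind the machine verification the paper invokes.
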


Note that all depth 3 odd weight FMZVs are given by
Theorems~\ref{thm:FMZVdepth3superbity1}. All FMZVs of weight 7
and depth at least 5 can be converted to FMZVs of depth at
most 3 by the duality relations in Theorem~\ref{thm:dualFMZVsuperb=1}.

\begin{thm}\label{thm:wt7FMZV}
The FMZVs of superbity 1, depth 4 and weight 7 are given by
Table~\ref{Table:wt7FMZV} (or can be obtained by the reversal relations).
Moreover, $\zeta^\star_{\calA_1}=\zeta_{\calA_1}$
for all the values in the table.
\begin{table} [!h]
{
\begin{center}
\begin{tabular}{  ||c|c||c|c||c|c|| } \hline
 $\zeta_{\calA_1}(1,1,1,4)$ & $-27\gb_7'$ &
 $\zeta_{\calA_1}(1,1,2,3)$ & $69\gb_7'$ &
 $\zeta_{\calA_1}(1,1,3,2)$ & $-27\gb_7'$ \\ \hline
 $\zeta_{\calA_1}(1,1,4,1)$ & $33\gb_7'$ &
 $\zeta_{\calA_1}(1,2,1,3)$ & $-27\gb_7'$ &
 $\zeta_{\calA_1}(1,2,2,2)$ & $-27\gb_7'$ \\ \hline
 $\zeta_{\calA_1}(1,2,3,1)$ & $-63\gb_7'$ &
 $\zeta_{\calA_1}(1,3,1,2)$ & $-9\gb_7'$ &
 $\zeta_{\calA_1}(2,1,1,3)$ & $33\gb_7'$ \\ \hline
 $\zeta_{\calA_1}(2,1,2,2)$ & $-63\gb_7'$ & \ &\  & \ &\ \\ \hline
\end{tabular}
\end{center}
}
\caption{FMZVs of superbity 1, depth 4 and weight 7
($\gb_7'=\gbb_7/16$).}
\label{Table:wt7FMZV}
\end{table}
\end{thm}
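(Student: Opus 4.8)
The plan is to determine all twenty depth-$4$ weight-$7$ positive compositions by solving a linear system over $\Q$ whose only inhomogeneous input consists of the already-evaluated lower-depth weight-$7$ values; every unknown will thereby be forced to be a rational multiple of $\gb_7$, and dividing by the normalization $\gb_7'=\gb_7/16$ produces the table.

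First I would halve the number of unknowns. Since $|\bfs|=7$ is odd and $\sgn(\bfs)=1$ for a positive composition, the reversal relation \eqref{equ:reversal} gives $\zeta_{\calA_1}(\tla{\bfs})=-\zeta_{\calA_1}(\bfs)$; as no composition of $7$ into $4$ parts is a palindrome (that would force $2(s_1+s_2)=7$), the twenty compositions split into ten reversal pairs, and it suffices to pin down the ten representatives in Table~\ref{Table:wt7FMZV}. At the same time I record the data that will drive the system: every depth-$1$ value vanishes (Theorem~\ref{thm:homogeneousWols1}, using superbity $1$ for even weight and superbity $2$ for odd weight), every depth-$2$ weight-$7$ value is the explicit multiple of $\gb_7$ in \eqref{equ:depth2FMZV}, and every depth-$3$ weight-$7$ value is the explicit multiple of $\gb_7$ in Theorem~\ref{thm:FMZVdepth3superbity1} (the theorem applies since $w=7$ is odd).

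Next I would generate relations among the weight-$7$ FMZVs from the double shuffle package of Definition~\ref{defn:FESdbsf}. The linear stuffle relations of Definition~\ref{defn:FESstuffle} arise by multiplying each known weight-$k$ identity (for $k<7$) by an FMZV of weight $7-k$ and expanding via \eqref{equ:worFormStuff}; here the depth-$\le 3$ weight-$7$ values enter precisely as the terms in which an index is merged onto a lower-depth word, so they furnish the inhomogeneous right-hand side, a rational multiple of $\gb_7$. The linear shuffle relations come from Theorem~\ref{thm:FESshuffle}(iii) applied to all admissible splittings $\bfw,\bfv$ with $|\bfw|+|\bfv|=7$. Substituting the known values turns this into a linear system in the ten depth-$4$ unknowns, which I would solve (a finite, mechanizable computation) to obtain the table.

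Finally, for the assertion $\zeta^\star_{\calA_1}=\zeta_{\calA_1}$ throughout the table I would use the concatenation relation \eqref{equ:FMZVSH}, which gives $\zeta^\star_{\calA_1}(\bfs)-\zeta_{\calA_1}(\bfs)=\sum_{\bft\prec\bfs}\zeta_{\calA_1}(\bft)$, the sum running over the proper coarsenings of $\bfs$; these all have depth $\le 3$ and are already evaluated, so checking that the sum vanishes for each entry is a short computation. The main obstacle is not any individual step but the rank question: a priori it is not clear that the stuffle, shuffle, and reversal relations alone determine all ten values rather than leaving a one- or two-parameter family. What makes the argument close is that in weight $7$ these relations turn out to have exactly the rank needed for unique solvability; should they fall short, one supplements them with the depth-reducing duality of Theorem~\ref{thm:dualFMZVsuperb=1} (which in weight $7$ sends depth $\ge 5$ to depth $\le 3$ and fixes depth $4$ to itself), supplying the extra relations required to pin the system down.
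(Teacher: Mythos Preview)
Your proposal is correct and mirrors the paper's own approach: the paper offers no line-by-line argument for this theorem but obtains the table by Maple computation using precisely the ingredients you name (the known depth-$\le 3$ weight-$7$ values from Theorems~\ref{thm:homogeneousWols1}, \ref{thm:FMZVdepth2superbity1}, \ref{thm:FMZVdepth3superbity1}, together with the reversal, stuffle, and shuffle relations of \S\S\ref{sec:FMZVdbsf}--\ref{sec:FMZVrev}, with duality available as a supplement). Your explicit halving via reversal and your verification of $\zeta^\star_{\calA_1}=\zeta_{\calA_1}$ through \eqref{equ:FMZVSH} are exactly the sort of bookkeeping the computation entails, and your caveat about the rank of the linear system is apt---indeed the paper confirms (in the theorem following Conjecture~\ref{conj:FMZVsuperbity1}) that the double shuffle relations suffice in this weight range.
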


\section{Finite multiple zeta values of small superbities}
We now move up to superbity 2 and beyond, namely, we consider
congruences modulo $p$-powers.
First, we improve on Theorem~\ref{thm:FMZVdepth2superbity1}.
\begin{thm} \label{thm:FMZVdepth2superbity2} \emph{(\cite[Theorem~3.2]{Zhao2008a})}
Suppose $s,t\in\N$ have the same same parity. Then
\begin{align*}
\zeta_{\calA_2}(s, t) =\, & p
\left[(-1)^t s \binom{s+t+1}{t}-(-1)^t t\binom{s+t+1}{s}
 -s-t\right] \frac{\gb_{t+s+1}}{2}, \\
\zeta^\star_{\calA_2}(t, s) =\, & p
\left[(-1)^t s \binom{s+t+1}{t}-(-1)^t t\binom{s+t+1}{s}
 +s+t\right] \frac{\gb_{t+s+1}}{2} .
\end{align*}
\end{thm}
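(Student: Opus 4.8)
The plan is to pin down the two depth-two, superbity-two values $\zeta_{\calA_2}(s,t)$ and $\zeta_{\calA_2}(t,s)$ by deriving two independent $\Q$-linear relations between them and then solving the resulting $2\times2$ system; the star value will follow formally from the conversion of Theorem~\ref{thm:Hoff}.

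First I would feed the two depth-one words into the stuffle homomorphism of Proposition~\ref{prop:zetaStuffle}, which gives
\[ \zeta_{\calA_2}(s)\,\zeta_{\calA_2}(t)=\zeta_{\calA_2}(s,t)+\zeta_{\calA_2}(t,s)+\zeta_{\calA_2}(s\oplus t), \]
where $s\oplus t=s+t$. Here the same-parity hypothesis does the essential work: by \eqref{equ:homogeneousWolsOddWeight} of Theorem~\ref{thm:homogeneousWols1} and the $d=1$ case of Theorem~\ref{thm:homogeneousWols2}, each factor $\zeta_{\calA_2}(s),\zeta_{\calA_2}(t)$ is either $0$ (odd weight) or an integer multiple of $p$ (even weight), so in either case the product lies in $p^2\calA_2=0$. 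Thus the left-hand side vanishes, and since $s+t$ is even the $d=1$ formula \eqref{equ:wolHomoWtEven} gives $\zeta_{\calA_2}(s+t)=(s+t)\gb_{s+t+1}\,p$, whence $\zeta_{\calA_2}(s,t)+\zeta_{\calA_2}(t,s)=-(s+t)\gb_{s+t+1}\,p$.

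Second, I would apply the superbity-two reversal relation \eqref{equ:reversal2} to $\bfs=(s,t)$. As $|\bfs|=s+t$ is even and $\sgn(\bfs)=1$, the leading sign is $+1$, so
\[ \zeta_{\calA_2}(t,s)-\zeta_{\calA_2}(s,t)=p\big(s\,\zeta_{\calA_2}(s+1,t)+t\,\zeta_{\calA_2}(s,t+1)\big). \]
Since the right-hand terms carry a factor $p$, they are only needed modulo $p$, i.e.\ as their superbity-one values, which \eqref{equ:depth2FMZV} of Theorem~\ref{thm:FMZVdepth2superbity1} supplies: $\zeta_{\calA_1}(s+1,t)=(-1)^{s+1}\binom{s+t+1}{s+1}\gb_{s+t+1}$ and $\zeta_{\calA_1}(s,t+1)=(-1)^{s}\binom{s+t+1}{s}\gb_{s+t+1}$. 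Using $\binom{s+t+1}{s+1}=\binom{s+t+1}{t}$ and $(-1)^s=(-1)^t$, this expresses the difference as an explicit multiple of $p\gb_{s+t+1}$.

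Half the sum minus half the difference of the two displayed relations then yields $\zeta_{\calA_2}(s,t)$ in the claimed closed form. For the star value I would use the conversion \eqref{equ:FMZVSH}, which in depth two reads $\zeta^\star_{\calA_2}(t,s)=\zeta_{\calA_2}(t,s)+\zeta_{\calA_2}(t+s)$; substituting the value of $\zeta_{\calA_2}(t,s)$ obtained from the same system together with $\zeta_{\calA_2}(t+s)=(s+t)\gb_{s+t+1}p$ and simplifying gives the stated expression. The only genuine obstacle is bookkeeping: tracking the binomial symmetries and the $(-1)^s=(-1)^t$ reductions, and verifying at each step that products of order-$p$ quantities vanish in $\calA_2$; there is no conceptual difficulty once the same-parity hypothesis is exploited both to annihilate the stuffle product and to trivialize the reversal sign.
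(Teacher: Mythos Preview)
Your argument is correct and is precisely the natural route: combine the stuffle identity $\zeta_{\calA_2}(s)\zeta_{\calA_2}(t)=\zeta_{\calA_2}(s,t)+\zeta_{\calA_2}(t,s)+\zeta_{\calA_2}(s+t)$ (whose left side dies in $\calA_2$ under the same-parity hypothesis) with the superbity-two reversal \eqref{equ:reversal2}, reduce the order-$p$ correction terms via \eqref{equ:depth2FMZV}, and solve the resulting $2\times2$ system. The paper does not give a proof here at all---the theorem is simply quoted from \cite{Zhao2008a}---but your method uses exactly the ingredients the paper has set up (Proposition~\ref{prop:zetaStuffle}, Theorems~\ref{thm:homogeneousWols1}--\ref{thm:homogeneousWols2}, \ref{thm:FMZVdepth2superbity1}, \ref{thm:reversal2}, and \eqref{equ:FMZVSH}) and is the expected argument.

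One small caution on the final step: when you actually carry out the computation of $\zeta^\star_{\calA_2}(t,s)=\zeta_{\calA_2}(t,s)+\zeta_{\calA_2}(s+t)$ you will find that the binomial part comes out with the \emph{opposite} sign to what is printed (equivalently, the displayed star formula matches $\zeta^\star_{\calA_2}(s,t)$ rather than $\zeta^\star_{\calA_2}(t,s)$; check $s=1$, $t=3$ against Table~\ref{Table:wt2-4FMZVsuperbity2}). This is a typo in the statement, not a flaw in your method, and the identity $\zeta^\star_{\calA_2}(s,t)-\zeta_{\calA_2}(s,t)=\zeta_{\calA_2}(s+t)=(s+t)p\gb_{s+t+1}$ makes the intended formula transparent.
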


So up to weight 4 we have the following table
(see \cite[Proposition~3.7]{Zhao2008a}
for detailed computation in weight 4)
\begin{table}[!h]
{
\begin{center}
\begin{tabular}{  ||c|c||c|c||c|c|| } \hline
 $\zeta_{\calA_2}(2)$ & $2p\gb_{3}$ &
 $\zeta_{\calA_2}(1,1)$ & $- p\gb_{3}$ &
 $\zeta_{\calA_2}(1,2)$ & $\zeta_{\calA_2}(1,2)$ \\ \hline
 $\zeta_{\calA_2}(2,1)$ & $-\zeta_{\calA_2}(1,2)$ &
 $\zeta_{\calA_2}(4)$ & $4 p\gb_{5} $ &
 $\zeta_{\calA_2}(1,3)$ & ${\small \frac12}p\gb_{5} $\\ \hline
 $\zeta_{\calA_2}(3,1)$ & $-{\small \frac92}p\gb_{5}$ &
 $\zeta_{\calA_2}(1,1,2)$ & $3\gb_{5}$  &
 $\zeta_{\calA_2}(1,2,1)$ & $-{\small \frac92}p\gb_{5}$ \\ \hline
 $\zeta_{\calA_2}(2,1,1)$ & ${\small \frac{11}2}\gb_{5}$  &
 $\zeta_{\calA_2}(1,1,1,1)$ & $- \gb_{5}$  &\  & \\ \hline
\end{tabular}
\end{center}
}
\caption{FMZV of superbity 2 in weight up to 4.}
\label{Table:wt2-4FMZVsuperbity2}
\end{table}
Note that $\zeta_{\calA_2}(1)=\zeta_{\calA_2}(3)=\zeta_{\calA_2}(5)
=\zeta_{\calA_2}(\{1\}^3)=\zeta_{\calA_2}(\{1\}^5)=0.$
For larger weights we have the next two theorems.
\begin{thm} \label{thm:wt5FMZVleve2IDs}
In weight 5 we have
\begin{align}
\zeta^\star_{\calA_2}(1,3,1)=\zeta_{\calA_2}(1,3,1)=\, & 0,\label{equ:wt5superbity2z131}\\
\zeta^\star_{\calA_2}(2,1,2)=\zeta_{\calA_2}(2,1,2)=\, & -3 p \gb_{3}^2,\label{equ:wt5superbity2z212}\\
\zeta^\star_{\calA_2}(2, 3)=\zeta_{\calA_2}(2, 3)
=\, & 2\zeta^\star_{\calA_2}(4, 1)= 2\zeta_{\calA_2}(4, 1), \label{equ:wt5superbity2z32}\\
2\zeta^\star_{\calA_2}(2, 2, 1)-3\zeta_{\calA_2}(4,1)=\, & 9p\gb_{3}^2, \label{equ:wt5superbity2z122}
\end{align}
and
\begin{equation}\label{equ:wt5superbity2z113}
2\zeta^\star_{\calA_2}(1,1,3)=-2\zeta^\star_{\calA_2}(3,1,1)
=-2\zeta_{\calA_2}(1,1,3)= 2\zeta_{\calA_2}(3,1,1)= \zeta_{\calA_2}(4, 1).
\end{equation}
\end{thm}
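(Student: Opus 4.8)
The plan is to work entirely in $\calA_2$, use the parity of the weight ($5$ is odd), and reduce every star value to a non-star one via the conversion \eqref{equ:FMZVSH}, so that the whole theorem becomes a system of linear identities among the weight-$5$ superbity-$2$ FMZVs $\zeta_{\calA_2}(\bfs)$. First I would dispose of the two palindromes. Applying the superbity-$2$ reversal \eqref{equ:reversal2} to $\bfs=(1,3,1)$ and $\bfs=(2,1,2)$, the left side equals $-\zeta_{\calA_2}(\bfs)$, so $2\zeta_{\calA_2}(\bfs)=-p\sum_i|s_i|\zeta_{\calA_1}(\bfs\oplus\bfe_i)$; the correction only sees the mod-$p$ reductions of the weight-$6$ values (it is multiplied by $p$), which are read off from Table~\ref{Table:wt6FMZV}. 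For $(1,3,1)$ the combination $\zeta_{\calA_1}(2,3,1)+3\zeta_{\calA_1}(1,4,1)+\zeta_{\calA_1}(1,3,2)$ vanishes, giving \eqref{equ:wt5superbity2z131}; for $(2,1,2)$ the bracket equals $6\gb_3^2$, giving \eqref{equ:wt5superbity2z212}. The same reversal, applied to the non-palindromic pairs, supplies the antisymmetries $\zeta_{\calA_2}(1,4)=-\zeta_{\calA_2}(4,1)$, $\zeta_{\calA_2}(3,2)=-\zeta_{\calA_2}(2,3)$, $\zeta_{\calA_2}(3,1,1)=-\zeta_{\calA_2}(1,1,3)$ (the depth-$2$ corrections vanish since $\gb_6=0$), cutting the independent unknowns down to $A:=\zeta_{\calA_2}(4,1)$, $B:=\zeta_{\calA_2}(2,3)$, $C:=\zeta_{\calA_2}(1,1,3)$ and $D:=\zeta_{\calA_2}(2,2,1)$.

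Next I would extract the stuffle relations (Proposition~\ref{prop:zetaStuffle}, Definition~\ref{defn:FESstuffle}). Since the odd-argument depth-$1$ values $\zeta_{\calA_2}(1)=\zeta_{\calA_2}(3)=\zeta_{\calA_2}(5)=0$ vanish by \eqref{equ:homogeneousWolsOddWeight}, the only contributing products are $\zeta_{\calA_2}(2)\ast\zeta_{\calA_2}(2,1)$, $\zeta_{\calA_2}(2)\ast\zeta_{\calA_2}(1,2)$ and the two depth-$2\times$depth-$2$ products $\zeta_{\calA_2}(1,1)\ast\zeta_{\calA_2}(2,1)$, $\zeta_{\calA_2}(1,1)\ast\zeta_{\calA_2}(1,2)$, all with left-hand sides computed from the depth-$1$ and $2$ data of Table~\ref{Table:wt2-4FMZVsuperbity2}. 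The first two collapse to the single relation $2D+A+B=9p\gb_3^2$. The depth-$2\times$depth-$2$ products also bring in the depth-$4$ weight-$5$ values; these I would eliminate using the superbity-$2$ duality \eqref{equ:dualityLevel2}, which pairs depth $2$ with depth $4$, together with \eqref{equ:FMZVSH}, expressing each depth-$4$ value through $A,B,C,D$. The resulting system yields $C=-\tfrac12A$, i.e. the non-star part $-2\zeta_{\calA_2}(1,1,3)=2\zeta_{\calA_2}(3,1,1)=\zeta_{\calA_2}(4,1)$ of \eqref{equ:wt5superbity2z113}.

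At this point reversal, stuffle and duality have determined the weight-$5$ space only up to the two-parameter family $(A,B)$, with $C=-\tfrac12A$ and $2D+A+B=9p\gb_3^2$; modulo $p$ one already has $B\equiv2A$ from the depth-$2$ formula \eqref{equ:depth2FMZV} (indeed $\zeta_{\calA_1}(2,3)=10\gb_5=2\cdot5\gb_5=2\zeta_{\calA_1}(4,1)$), but the exact identity \eqref{equ:wt5superbity2z32} in $\calA_2$ needs one further congruence. This is the crux, and I would obtain it from the shuffle relation (Theorem~\ref{thm:FESshuffle}). Although that theorem is stated in $\calA_1$, its proof writes the difference of the two $H_{p-1}$-sums as $p$ times the convolution $\sum_{i}(-1)^i\sum_{j=1}^{p-1}\Coeff_{z^j}[\,\cdot\,]\,\Coeff_{z^{p-j}}[\,\cdot\,]$; reducing that convolution modulo $p$ promotes the shuffle to a genuine identity in $\calA_2$. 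Taking $s=2$, $\bfu=\x_0\x_1$, $\bfv=\x_1$ produces $-2\zeta_{\calA_2}(2,2,1)+6\zeta_{\calA_2}(1,1,3)=p\,S$ for an explicit weight-$6$ convolution $S$; combined with the two relations above this rearranges to $B-2A=p\,(S+9\gb_3^2)$. Hence \eqref{equ:wt5superbity2z32} is equivalent to the single congruence $S\equiv-9\gb_3^2\pmod p$.

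The main obstacle is exactly this last step: evaluating $S$ modulo $p$ is a nontrivial weight-$6$ computation involving sums such as $\sum_{j}H_{j-1}(2)H_{p-j-1}(1)(p-2j)/\big(j^2(p-j)^2\big)$, whose reduction to $\gb_3^2$ uses Wolstenholme-type congruences and the weight-$6$ data of Table~\ref{Table:wt6FMZV}; this is where, as elsewhere in the paper, a computer-algebra verification is the practical route. Once $B=2A$ is in hand everything else follows mechanically: $2D+A+B=9p\gb_3^2$ becomes $2\zeta_{\calA_2}(2,2,1)+3\zeta_{\calA_2}(4,1)=9p\gb_3^2$, and feeding $A,B,C,D$ back through \eqref{equ:FMZVSH} gives $\zeta^\star_{\calA_2}(2,2,1)=\zeta_{\calA_2}(2,2,1)+3A$, which is \eqref{equ:wt5superbity2z122}, together with all the remaining star/non-star values in \eqref{equ:wt5superbity2z131}, \eqref{equ:wt5superbity2z212} and \eqref{equ:wt5superbity2z113}, with $\zeta_{\calA_2}(4,1)$ left as the single free parameter.
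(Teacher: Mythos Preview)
Your route is genuinely different from the paper's. For the key relation \eqref{equ:wt5superbity2z32} the paper does not use structural relations at all: it imports Theorem~2.1 of \cite{HessamiPilehrood2Ta2013}, a closed binomial-sum formula for $H^\star_{p-1}(\{2\}^a,3,\{2\}^b)$, expands $\binom{p-1}{k}/\binom{p-1+k}{k}$ modulo $p^2$ as in \eqref{equ:expansionBinomp}, and after a page of manipulation arrives at $H^\star_{p-1}(2,3)\equiv 6H_{p-1}(4,1)-2H_{p-1}(2,3)\pmod{p^2}$, whence $B=2A$. The rest of the theorem is then deduced from \eqref{equ:wt5superbity2z32} together with stuffle and reversal. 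Your treatment of the two palindromes by direct application of \eqref{equ:reversal2} and Table~\ref{Table:wt6FMZV} is also cleaner than the paper's, which obtains \eqref{equ:wt5superbity2z131} from duality on $(1,1,3)$ plus the stuffle $\zeta_{\calA_2}(1,1)\zeta_{\calA_2}(3)=0$, and then \eqref{equ:wt5superbity2z212} from duality on $(2,1,2)$.

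There is, however, a miscount in your linear algebra that makes you reach for the lifted shuffle unnecessarily. You dismiss the stuffle products with a vanishing factor (``the only contributing products are\dots''), but precisely because $\zeta_{\calA_2}(1)=\zeta_{\calA_2}(3)=0$ these products give \emph{homogeneous} relations among your unknowns. In particular $\zeta_{\calA_2}(1)\ast\zeta_{\calA_2}(1,3)=0$ expands to $2\zeta_{\calA_2}(1,1,3)+\zeta_{\calA_2}(1,3,1)+\zeta_{\calA_2}(1,4)+\zeta_{\calA_2}(2,3)=0$, i.e.\ $2C=A-B$. Combined with your own $C=-\tfrac12A$ (which does indeed follow from the depth-$2\times$depth-$2$ stuffle $\zeta_{\calA_2}(1,1)\ast\zeta_{\calA_2}(2,1)$ after eliminating the depth-$4$ values via \eqref{equ:dualityLevel2} and the weight-$6$ data), this yields $B=2A$ at once. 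So reversal, stuffle and duality already cut the weight-$5$ superbity-$2$ space down to a \emph{one}-parameter family in $A=\zeta_{\calA_2}(4,1)$, not two; your ``main obstacle''---evaluating the shuffle convolution $S$ modulo $p$---never actually arises, and neither the lifted shuffle nor the external input of \cite{HessamiPilehrood2Ta2013} is needed.
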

\begin{proof}
Throughout this proof all congruences are taken modulo $p^2$. First,
by applying \eqref{equ:dualityLevel2} to $\bfs=(1,1,3)$ we get
\begin{multline}\label{equ:deriveHstar131}
-\zeta^\star_{\calA_2}(3,1,1)=\zeta^\star_{\calA_2}(1,1,3)
+p\Big(\zeta_{\calA_1}(1,1,1,3)+\zeta_{\calA_1}(1,2,3)\\
+\zeta_{\calA_1}(1,1,4)+\zeta_{\calA_1}(1,5)\Big)
=\zeta^\star_{\calA_2}(1,1,3),
\end{multline}
which yields the first ``='' in \eqref{equ:wt5superbity2z113}
by Theorem~\ref{thm:wt6FMZV}.
Notice by the reversal relations in Theorem~\ref{thm:reversal2} $\zeta^\star_{\calA_2}(1,3,1)=\zeta_{\calA_2}(1,3,1)$
and
\begin{equation*}
-\zeta_{\calA_2}(1,1,3)-\zeta_{\calA_2}(3,1,1)=p(\zeta_{\calA_1}(2,1,3)+\zeta_{\calA_1}(1,2,3)+3\zeta_{\calA_1}(1,1,4))
\end{equation*}
by Theorems~\ref{thm:wt6FMZV} and \ref{thm:wt7FMZV}.
Thus \eqref{equ:deriveHstar131} together with stuffle relations yields
\begin{multline*}
  0= \zeta_{\calA_2}(1,1)\zeta_{\calA_2}(3)
  =\zeta_{\calA_2}(1,3,1)+\zeta_{\calA_2}(1,1,3) \\
  +\zeta_{\calA_2}(3,1,1)+\zeta_{\calA_2}(2)\zeta_{\calA_2}(3)
    -\zeta_{\calA_2}(5)=\zeta_{\calA_2}(1,3,1).
\end{multline*}
which gives \eqref{equ:wt5superbity2z131} using Theorem~\ref{thm:reversal2}.
Applying \eqref{equ:dualityLevel2} to $\bfs=(2,1,2)$ we get
\begin{multline*}
-\zeta^\star_{\calA_2}(1,3,1)=
p\Big(\zeta_{\calA_1}(1,2,1,2)+\zeta_{\calA_1}(1,3,2)+\zeta_{\calA_1}(1,2,3)+\zeta_{\calA_1}(1,5)\Big)\\
+\zeta^\star_{\calA_2}(2,1,2)
=\zeta^\star_{\calA_2}(2,1,2)+3p \gb_{3}^2
\end{multline*}
using Table~\ref{Table:wt6FMZV} on page \pageref{Table:wt6FMZV}.
Thus \eqref{equ:wt5superbity2z212} follows immediately from \eqref{equ:wt5superbity2z131}.

Turning to the proof of \eqref{equ:wt5superbity2z32} we note that for any positive integer $k<p$ (setting $h_k=H_{k-1}$)
\begin{equation}\label{equ:expansionBinomp}
\begin{split}
& \frac{\binom{p-1}{k}}{\binom{p-1+k}{k}}=\frac{(p-1)(p-2)\cdots(p-k)}{p(p+1)\cdots(p+k-1)}
=\frac{(-1)^k k}{p}\prod_{j=1}^k\left(1-\frac{p}{j}\right)\prod_{j=1}^{k-1}\left(1+\frac{p}{j}\right)^{-1} \\
\equiv &\frac{(-1)^k k}{p}\Big(1-pH_k(1)+p^2H_k(1,1)\Big)\Big(1-ph_k(1)+p^2h_k(2)+p^2h_k(1,1)\Big)  \\
\equiv &\frac{(-1)^k k}{p}\bigg(1-2ph_k(1)-\frac{p}{k}
    +\frac{2p^2}{k}h_k(1)+2p^2h_k(2)+4p^2h_k(1,1)\bigg)  .
\end{split}
\end{equation}
By \cite[Theorem~2.1]{HessamiPilehrood2Ta2013} we obtain
(with abuse of notation $H=H_{p-1}$)
{\allowdisplaybreaks
\begin{align*}
& H^\star_{p-1}(\{2\}^a,3,\{2\}^b)\\
\equiv& -\frac{2}{p}\sum_{k=1}^{p-1}\frac{1}{k^{2b+2a+2}}\left(1-2ph_k(1)-\frac{p}{k}
    +\frac{2p^2}{k}h_k(1) +2p^2h_k(2)+4p^2h_k(1,1)\right) \\
-&\frac{4}{p}\sum_{k=1}^{p-1}\frac{h_k(2b+1)}{k^{2a+1}}\left(1-2ph_k(1)-\frac{p}{k}
    +\frac{2p^2}{k}h_k(1) +2p^2h_k(2)+4p^2h_k(1,1)\right) \\
=&-\frac{2}{p}H(2b+2a+2)-\frac{4}{p}H(2a+1,2b+1)+4H(2b+2a+2,1) \\
+&2H(2b+2a+3)+8H(2a+1,1,2b+1)+8H(2a+1,2b+1,1)  \\
+&8H(2a+1,2b+2)+4H(2a+2,2b+1)\\
-&4p\Big[H(2a+2b+3,1)+H(2a+2b+2,2)+2H(2a+2b+2,1,1)]   \\
-&8p\Big[H(2a+2,1,2b+1)+H(2a+2,2b+1,1)+H(2a+2,2b+2)\\
&\phantom{-} +H(2a+1,2,2b+1)+H(2a+1,2b+1,2)+H(2a+1,2b+3)\\
&\phantom{-} +2H(2a+1,1,2b+2)+2H(2a+1,2b+2,1)
+2H(2a+1,1,1,2b+1)\\
&\phantom{-} +2H(2a+1,1,2b+1,1)+2H(2a+1,2b+1,1,1)\Big].
\end{align*}}{}\noindent
Taking $a=1$ and $b=0$ we get
\begin{multline}\label{equ:H23midStep}
H^\star_{p-1}(2,3)=-\frac{2}{p}H(4)-\frac{4}{p}H(3,1)+16H(3,1,1)+8H(3,2)+8H(4,1)  \\
-4p\Big[H(5,1)+3H(4,2)+6H(4,1,1)+6H(3,2,1)+6H(3,1,2)+12H(3,1,1,1)\Big].
\end{multline}
By stuffle relation and using \eqref{equ:wt5superbity2z131} we see that
\begin{equation*}
   2\zeta_{\calA_2}(3,1,1)+\zeta_{\calA_2}(3,2)+\zeta_{\calA_2}(4,1)
   =\zeta_{\calA_2}(3,1)\zeta_{\calA_2}(1)-\zeta_{\calA_2}(1,3,1)= 0.
\end{equation*}
On the other hand
\begin{multline*}
\frac{1}{p}\Big(H(4)+2H(3,1)\Big)=\frac{1}{p}\Big(H(1)H(3)+H(3,1)-H(1,3)\Big)\\
 \equiv -\Big(3H(4,1)+H(3,2)+3pH(4,2)+6pH(5,1)\Big)
\end{multline*}
by \eqref{equ:reversal2}. Thus \eqref{equ:H23midStep} is reduced to
\begin{multline*}
H^\star_{p-1}(2,3) \equiv 6H(4,1)+2 H(3,2)+6p H(4,2)+12 p H(5,1)\\
-4p\Big[H(5,1)+3H(4,2)+6H(4,1,1)+6H(3,2,1)+6H(3,1,2)+12H(3,1,1,1)\Big]\\
\equiv 6H(4,1)+2 H(3)H(2)-2H(2,3)\equiv 6H(4,1)-2H(2,3)
\end{multline*}
by using the reversal relation and Theorem~\ref{thm:wt6FMZV}.
This together with
Theorem~\ref{thm:reversal2} implies \eqref{equ:wt5superbity2z32}.
Further, \eqref{equ:wt5superbity2z32} shows that
\begin{align*}
\zeta^\star_{\calA_2}(2, 2, 1)
=&\,\zeta_{\calA_2}(2, 2, 1)+\zeta_{\calA_2}(2, 3)+\zeta_{\calA_2}(4, 1)+\zeta_{\calA_2}(5)\\
=&\,\zeta_{\calA_2}(2, 2, 1)+3\zeta_{\calA_2}(4, 1).
\end{align*}
Combining this with Theorem~\ref{thm:homogeneousWols2} and \eqref{equ:wt5superbity2z32} we have
\begin{align*}
&-6 \gb_{3}^2=\zeta_{\calA_2}(2)\zeta_{\calA_2}(2,1)=
2\zeta_{\calA_2}(2,2,1)+\zeta_{\calA_2}(4,1)+\zeta_{\calA_2}(2,3)+\zeta_{\calA_2}(2,1,2) \\
&=2\zeta^\star_{\calA_2}(2,2,1)-3\zeta_{\calA_2}(4,1)+\zeta_{\calA_2}(2,1,2)
=2\zeta^\star_{\calA_2}(2,2,1)-3\zeta_{\calA_2}(4,1)-3\gb_{3}^2
\end{align*}
by \eqref{equ:wt5superbity2z212}. This clearly implies \eqref{equ:wt5superbity2z122}.

Finally, the second ``='' of \eqref{equ:wt5superbity2z113} is an
easy consequence the concatenation relations
\eqref{equ:FMZV=concatFMZSV} since
$\zeta_{\calA_2}(3)=\zeta_{\calA_2}(1)=0$. Therefore
\begin{align*}
\zeta^\star_{\calA_2}(3,1,1)
=&\,\zeta_{\calA_2}(3,1,1)+\zeta_{\calA_2}(3,2)+\zeta_{\calA_2}(4,1)+\zeta_{\calA_2}(5)\\
=&\,- \zeta^\star_{\calA_2}(3,1,1)-\zeta_{\calA_2}(4,1)
\end{align*}
by \eqref{equ:wt5superbity2z32}. So the last  ``='' of \eqref{equ:wt5superbity2z113} is verified. This finishes the
proof of the theorem.
\end{proof}

Similarly we can obtain the following results in weight 6.
We leave the detailed computation to the interested reader
and provide essentially all the other values in Table~\ref{Table:wt6FMZVsuperbity2}.

\begin{thm} \label{thm:wt6FMZVleve2IDs}
In weight $6$ we have
{\allowdisplaybreaks
\begin{align*}
\zeta^\star_{\calA_2}(1,1,4)=\zeta^\star_{\calA_2}(4,1,1)
=&\zeta_{\calA_2}(1,1,4)=\zeta_{\calA_2}(4,1,1), \\ 
\zeta^\star_{\calA_2}(1,4,1)=\zeta_{\calA_2}(1,4,1)=&-2\zeta_{\calA_2}(4,1,1)+6 p\gb_{7}, \\  
\zeta^\star_{\calA_2}(2,3,1)=\zeta_{\calA_2}(1,3,2)=&3\zeta_{\calA_2}(4,1,1)-\frac{65}{4} p\gb_{7}, \\  
\zeta^\star_{\calA_2}(1,2,3)=\zeta_{\calA_2}(3,2,1)=&-2\zeta_{\calA_2}(4,1,1)+\frac{17}{4} p\gb_{7}, \\  
\zeta^\star_{\calA_2}(2,1,3)=\zeta_{\calA_2}(3,1,2)=&-\zeta_{\calA_2}(4,1,1)+11 p \gb_{7}, \\  
\zeta^\star_{\calA_2}(1,3,2)=\zeta_{\calA_2}(2,3,1)=&3\zeta_{\calA_2}(4,1,1)-\frac{9}{4} p\gb_{7}, \\  
\zeta^\star_{\calA_2}(3,2,1)=\zeta_{\calA_2}(1,2,3)=&-2\zeta_{\calA_2}(4,1,1)-\frac{11}{4} p\gb_{7}, \\  
\zeta^\star_{\calA_2}(3,1,2)=\zeta_{\calA_2}(2,1,3)=&-\zeta_{\calA_2}(4,1,1)+ 18  p \gb_{7}. \\  
\end{align*}}{}\noindent
\end{thm}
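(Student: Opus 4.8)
The plan is to follow the template of the proof of Theorem~\ref{thm:wt5FMZVleve2IDs}, working modulo $p^2$ throughout, but to exploit a simplification peculiar to even weight: since $6$ is even, every depth-two argument $(s,t)$ with $s+t=6$ has $s\equiv t\pmod 2$, so Theorem~\ref{thm:FMZVdepth2superbity2} applies and all depth-$\le 2$ weight-six superbity-two values are already known (as are all weight-six and weight-seven superbity-one values, by Table~\ref{Table:wt6FMZV}, Table~\ref{Table:wt7FMZV}, Theorem~\ref{thm:FMZVdepth3superbity1} and Theorem~\ref{thm:FMZVdepth2superbity1}). This is exactly the input that was unavailable in weight five and forced the harmonic-sum computation there; its presence means the present theorem can be obtained by formal relations alone, with $\zeta_{\calA_2}(4,1,1)$ playing the role of the single free parameter.

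I would first fix the coefficient of $\zeta_{\calA_2}(4,1,1)$ for each of the ten depth-three compositions $\bfs$ of $6$. Reducing modulo $p$, Table~\ref{Table:wt6FMZV} shows that every such $\zeta_{\calA_1}(\bfs)$ is a rational multiple of $\zeta_{\calA_1}(4,1,1)=-\tfrac32\gb_3^2$; call this multiple $c_\bfs$. Since reversal in superbity one gives $\zeta_{\calA_1}(\tla\bfs)=\zeta_{\calA_1}(\bfs)$, one has $c_{\tla\bfs}=c_\bfs$, and one may write $\zeta_{\calA_2}(\bfs)=c_\bfs\,\zeta_{\calA_2}(4,1,1)+\eta_\bfs\,p\gb_7$ with $\eta_\bfs\in\Q$ to be determined (and $\eta_{(4,1,1)}=0$). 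The integer coefficients $1,3,-2,-1$ appearing in the statement are precisely these ratios $c_\bfs$, so the entire content of the theorem reduces to computing the rationals $\eta_\bfs$.

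The rationals $\eta_\bfs$ are pinned down by formal relations, in each of which every correction term carries a factor $p$ and may therefore be evaluated at superbity one. First, the reversal relation \eqref{equ:reversal2} pairs $\bfs$ with $\tla\bfs$: its correction $p\sum_i|s_i|\zeta_{\calA_1}(\bfs\oplus\bfe_i)$ is a weight-seven, depth-three superbity-one value, hence an explicit multiple of $\gb_7$ via Theorem~\ref{thm:FMZVdepth3superbity1}, which determines $\eta_{\tla\bfs}-\eta_\bfs$ for each of the four reversal pairs (and forces the correction to cancel for the palindromes $(1,4,1)$ and $(2,2,2)$). Second, the stuffle relations obtained by expanding products such as $\zeta_{\calA_2}(2)\ast\zeta_{\calA_2}(3,1)$ by \eqref{equ:worFormStuff} give linear relations among the depth-three values; because both factors are $O(p)$, such products vanish modulo $p^2$, yielding clean relations among the $\eta_\bfs$ once the depth-two remainders are inserted from Theorem~\ref{thm:FMZVdepth2superbity2}. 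Third, the superbity-two duality \eqref{equ:dualityLevel2} links $\zeta^\star_{\calA_2}(\bfs)$ to $\zeta^\star_{\calA_2}(\bfs^\vee)$ across depths three and four, its correction $p\sum_{\bft\preceq\bfs}\zeta_{\calA_1}(1,\bft)$ being a weight-seven superbity-one value supplied by Table~\ref{Table:wt7FMZV} and Theorem~\ref{thm:FMZVdepth3superbity1}; feeding in Hoffman's conversion \eqref{equ:FMZVSH} together with the depth-four reversal relation to rewrite the depth-four star values then supplies the remaining relations. Finally, combining \eqref{equ:FMZVSH} with \eqref{equ:reversal2} yields the pairing $\zeta^\star_{\calA_2}(\bfs)=\zeta_{\calA_2}(\tla\bfs)$ displayed in the theorem, the discrepancy being a sum of known depth-$\le 2$ values that one checks cancels against the reversal correction.

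The hard part is not any single identity but the organisation: one must verify that the reversal, stuffle, and duality relations together have enough rank to determine the remaining $\eta_\bfs$ from the one free parameter, and carry the $p\gb_7$-bookkeeping through without error (the quarter-integer coefficients $-\tfrac{65}4,\tfrac{17}4,-\tfrac94,-\tfrac{11}4$ trace to the normalisation $\gb_7'=\gb_7/16$ in Table~\ref{Table:wt7FMZV} against the halves in Theorems~\ref{thm:FMZVdepth3superbity1} and \ref{thm:FMZVdepth2superbity2}). Should the formal system fall one relation short, the fallback---exactly as in weight five---is to expand a single three-containing star value such as $H^\star_{p-1}(2,3,1)$ directly via \eqref{equ:expansionBinomp} and \cite[Theorem~2.1]{HessamiPilehrood2Ta2013}; but the even weight makes this unnecessary in practice, which is why the detailed computation, collected in Table~\ref{Table:wt6FMZVsuperbity2}, is left to the reader.
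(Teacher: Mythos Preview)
Your proposal is correct and follows the same template the paper intends: its ``proof'' is merely the sentence ``Similarly we can obtain the following results in weight 6.\ We leave the detailed computation to the interested reader,'' so your outline---reversal \eqref{equ:reversal2}, stuffle, duality \eqref{equ:dualityLevel2}, and the conversion \eqref{equ:FMZVSH}, with weight-seven superbity-one corrections read off from Theorem~\ref{thm:FMZVdepth3superbity1} and Table~\ref{Table:wt7FMZV}---is exactly the ``similarly'' being invoked. Your observation that in even weight all depth-two values are already supplied by Theorem~\ref{thm:FMZVdepth2superbity2} (so the harmonic-sum input via \cite[Theorem~2.1]{HessamiPilehrood2Ta2013} used for \eqref{equ:wt5superbity2z32} is not needed here) is a genuine simplification over the weight-five argument and a correct explanation for why the paper is content to omit the details.
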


\begin{cor} \label{cor:wt6FMZVgenerators}
The FMZV space of weight $6$ superbity $2$ is given by
\begin{equation*}
\FMZ_{6,2}=\langle \zeta_{\calA_2}(4,1,1), p\gb_{7}\rangle.
\end{equation*}
\end{cor}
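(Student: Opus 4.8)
The plan is to prove both inclusions of the asserted equality $\FMZ_{6,2}=\langle\zeta_{\calA_2}(4,1,1),p\gb_7\rangle$. The inclusion $\supseteq$ is immediate: $\zeta_{\calA_2}(4,1,1)$ is itself a weight-$6$ superbity-$2$ FMZV, and by Theorem~\ref{thm:homogeneousWols2} (equation \eqref{equ:wolHomoWtEven} with $d=1$, $s=6$) we have $\zeta_{\calA_2}(6)=6p\gb_7$, so $p\gb_7=\tfrac16\zeta_{\calA_2}(6)\in\FMZ_{6,2}$. The substance is the reverse inclusion, namely that every $\zeta_{\calA_2}(\bfs)$ with $\bfs\in\N^d$ and $|\bfs|=6$ lies in $V:=\langle\zeta_{\calA_2}(4,1,1),p\gb_7\rangle$. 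Since the concatenation relations \eqref{equ:FMZVSH} express each star value as a $\Z$-linear combination of non-star values of the same weight and of no larger depth, and conversely, the span of the weight-$6$ star values coincides with that of the weight-$6$ non-star values; I will therefore move freely between the two.

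First I would dispose of the low depths. In depth $1$, $\zeta_{\calA_2}(6)=6p\gb_7\in V$. In depth $2$, every composition $(s,t)$ of $6$ has $s\equiv t\pmod 2$ (their sum is even), so Theorem~\ref{thm:FMZVdepth2superbity2} applies and writes $\zeta_{\calA_2}(s,t)$ as a rational multiple of $p\gb_{s+t+1}=p\gb_7$. In depth $3$, the eight identities of Theorem~\ref{thm:wt6FMZVleve2IDs} express all compositions of type $3{+}2{+}1$ and $4{+}1{+}1$ in terms of $\zeta_{\calA_2}(4,1,1)$ and $p\gb_7$, while $\zeta_{\calA_2}(2,2,2)=2p\gb_7$ by Theorem~\ref{thm:homogeneousWols2}, and the reversal relations of Theorem~\ref{thm:reversal2} account for any remaining arrangement. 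Thus all weight-$6$ superbity-$2$ FMZVs of depth $\le 3$, star and non-star alike, lie in $V$.

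The key step is the reduction of the depths $d\in\{4,5,6\}$ via the superbity-two duality \eqref{equ:dualityLevel2}. For such $\bfs$, relation \eqref{equ:lengthOfV-dual} gives $\dep(\bfs^\vee)=7-d\le 3$, so the left-hand side $\zeta^\star_{\calA_2}(\bfs^\vee)$ of \eqref{equ:dualityLevel2} is a depth-$\le3$ star value, already known to lie in $V$. Rearranging \eqref{equ:dualityLevel2} then yields
\[
\zeta^\star_{\calA_2}(\bfs)=-\zeta^\star_{\calA_2}(\bfs^\vee)-p\sum_{\bft\preceq\bfs}\zeta_{\calA_2}(1,\bft).
\]
The crucial observation is that each $\bft\preceq\bfs$ has $|\bft|=|\bfs|=6$, so $(1,\bft)$ has weight $7$; moreover, since multiplication by $p$ in $\calA_2$ depends only on the residue modulo $p$, one has $p\,\zeta_{\calA_2}(1,\bft)=p\,\zeta_{\calA_1}(1,\bft)$. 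Every weight-$7$ superbity-$1$ FMZV is a rational multiple of $\gb_7$ — this follows from \eqref{equ:depth2FMZV} in depth $2$, from Theorem~\ref{thm:FMZVdepth3superbity1} in depth $3$, from Theorem~\ref{thm:wt7FMZV} together with the reversal relations \eqref{equ:reversal} in depth $4$, and from the duality \eqref{equ:FMZVduality} combined with \eqref{equ:FMZVSH} to fold depths $5,6,7$ back to depth $\le 3$ (the homogeneous cases $(7)$ and $\{1\}^7$ vanish by odd weight). Hence the correction term lies in $\Q\cdot p\gb_7\subseteq V$, so $\zeta^\star_{\calA_2}(\bfs)\in V$ for every $\bfs$ of weight $6$. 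Feeding these back through the second relation in \eqref{equ:FMZVSH} shows that every non-star $\zeta_{\calA_2}(\bfs)$ of weight $6$ lies in $V$ as well, completing the reverse inclusion.

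I expect the main obstacle to be the bookkeeping in the higher-depth step: one must verify that the correction terms in \eqref{equ:dualityLevel2} genuinely carry weight $7$ rather than $6$, and that all weight-$7$ superbity-$1$ FMZVs collapse to rational multiples of the single quantity $\gb_7$, so that no independent $\gb_3^2$-type contribution survives. Granting the tabulated depth-$3$ values of Theorem~\ref{thm:wt6FMZVleve2IDs}, the remaining arguments are purely the linear-algebra manipulations of the duality, concatenation, and reversal relations already established.
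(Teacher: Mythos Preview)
Your proof is correct and follows essentially the same approach as the paper's: depth~$1$ and~$2$ via Theorems~\ref{thm:homogeneousWols2} and~\ref{thm:FMZVdepth2superbity2}, depth~$3$ via Theorem~\ref{thm:wt6FMZVleve2IDs}, and depths~$\ge 4$ via the superbity-two duality \eqref{equ:dualityLevel2} together with the fact that all weight-$7$ superbity-$1$ FMZVs are rational multiples of $\gb_7$. You have filled in details the paper leaves implicit, in particular the explicit verification of the $\supseteq$ inclusion and the depth-by-depth argument that every weight-$7$ value in $\calA_1$ collapses to $\Q\gb_7$.
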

\begin{proof}
By Theorem~\ref{thm:homogeneousWols2} and Theorem~\ref{thm:FMZVdepth2superbity2}
all depth 1 and 2 values are $\Q$ multiples of  $p\gb_{7}$. All depth 3
values are presented in Theorem~\ref{thm:wt6FMZVleve2IDs}.
By duality \eqref{equ:dualityLevel2} and Theorem~\ref{thm:wt7FMZV}
for weight 7 values (which are all $\Q$ multiples of $\gb_{7}$) we see
that all values of larger depths are also linear
combinations of $\zeta_{\calA_2}(4,1,1)$ and $p\gb_{7}$. This completes the proof of the corollary.
\end{proof}

Using Maple, one can compute FMZVs of depth 2 up to weight 8
which are listed in the next theorem.
For essentially complete tables of values of weight up to 6 (inclusive) see
Tables~\ref{Table:wt2-4FMZVsuperbity2}, Tables~\ref{Table:wt5FMZVsuperbity2}
and Tables~\ref{Table:wt6FMZVsuperbity2}.

\begin{table}[!h]
{
\begin{center}
\begin{tabular}{  ||c|c||c|c||c|c|| } \hline
$\zeta_{\calA_2}(1,4)  $ & $3p\gbb_3^2-2z311$ &
$\zeta_{\calA_2}(4,1) $ & $-3p\gbb_3^2+2z311$ &
$\zeta_{\calA_2}(1,1,3)$ & $-z311$ \\ \hline
$\zeta_{\calA_2}(2,3)  $ & $-3p\gbb_3^2+4z311$&
$\zeta_{\calA_2}(1,2,2)$ & $-{\small \frac{3}{2}}p\gbb_3^2+3z311$ &
$\zeta_{\calA_2}(2,1,2)$ & $-3p\gbb_3^2$ \\ \hline
$\zeta_{\calA_2}(3,2)  $ & $3p\gbb_3^2-4z311$ &
$\zeta_{\calA_2}(2,2,1)$ & ${\small \frac{9}{2}}p\gbb_3^2-3z311 $&
$\zeta_{\calA_2}(1,3,1)$ & $0$ \\ \hline
\end{tabular}
\end{center}
}
\caption{FMZV of superbity 2 in weight 5, where
$z311=\zeta_{\calA_2}(3,1,1)$.}
\label{Table:wt5FMZVsuperbity2}
\end{table}

\begin{table}[!h]
{
\begin{center}
\begin{tabular}{  ||c|c||c|c||c|c||} \hline
$\zeta_{\calA_2}(6) $ & $ 6\gbb_7$ &
$\zeta_{\calA_2}(5,1) $ & $-10\gbb_7$ &
$\zeta_{\calA_2}(2,1,3) $ & $ 18\gbb_7-\ga$ \\ \hline
$\zeta_{\calA_2}(1,5) $ & $ 4\gbb_7$ &
$\zeta_{\calA_2}(1,2,3) $ & $ {\small -\frac{11}{4}}\gbb_7-2\ga$ &
$\zeta_{\calA_2}(2,2,2) $ & $ 2\gbb_7$ \\ \hline
$\zeta_{\calA_2}(2,4) $ & $ -10\gbb_7$ &
$\zeta_{\calA_2}(1,3,2) $ & $ {\small -\frac{65}{4}}\gbb_7+3\ga$ &
$\zeta_{\calA_2}(2,3,1) $ & $ {\small -\frac{9}{4}}\gbb_7+3\ga$ \\ \hline
$\zeta_{\calA_2}(3,3) $ & $ -3\gbb_7$ &
$\zeta_{\calA_2}(1,4,1) $ & $ 6\gbb_7-2\ga$ &
$\zeta_{\calA_2}(3,1,2) $ & $ 11\gbb_7-\ga$\\ \hline
$\zeta_{\calA_2}(4,2) $ & $ 4\gbb_7$  &
$\zeta_{\calA_2}(1,1,4) $ & $\ga$  &
$\zeta_{\calA_2}(3,2,1) $ & $ {\small \frac{17}{4}}\gbb_7-2\ga$ \\ \hline
\end{tabular}
\end{center}
}
\caption{FMZV of superbity 2 in weight 6, where
$\ga=\zeta_{\calA_2}(4,1,1)$.}
\label{Table:wt6FMZVsuperbity2}
\end{table}

\begin{thm}
Setting $\zeta_2=\zeta_{\calA_2}$ we have
{\allowdisplaybreaks
\begin{align*}
 \FMZ_{1,2}&=\langle 0\rangle,\quad
 \FMZ_{2,2}=\langle \zeta_2(2)\rangle,\quad
 \FMZ_{3,2}=\big\langle \zeta_2(1,2)\big\rangle,\\
 \FMZ_{4,2}&=\langle \zeta_2(2,2)\rangle,\quad
 \FMZ_{5,2}=\big\langle \zeta_2(2,3),\zeta_2(1,2,2)\big\rangle,\\
 \FMZ_{6,2}&=\big\langle \zeta_2(2,2,2),\zeta_2(1,2,3)\big\rangle,\\
 \FMZ_{7,2}&=\left\langle
{\zeta_2(\{1\}^5,2),\zeta_2(\{1\}^3,4),\zeta_2(1,6), \atop
        \bfzt_2(2,3,1,1), \bfzt_2(3,1,1,2), \bfzt_2(3,3,1)}\right\rangle,\\
 \FMZ_{8,2}&=\left\langle {\zeta_2(\{1\}^6,2),\zeta_2(\{1\}^4,4),\zeta_2(1,1,6),\zeta_2(1,2,5),
     \atop \bfzt_2(5,1,1,1)}\right\rangle,\\
 \FMZ_{9,2}&=\left\langle
 {
 {\zeta_2(\{1\}^7,2),\zeta_2(\{1\}^5,4),\zeta_2(\{1\}^3,6),
    \zeta_2(1,8), \zeta_2(1,2,6), }\atop
 \bfzt_2(1,3,2,3),\bfzt_2(2,1,3,3),
  \bfzt_2(2,3,1,3),\bfzt_2(1,2,5,1)
 }
\right\rangle.
\end{align*}}{}\noindent
\end{thm}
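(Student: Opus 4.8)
The plan is to treat the statement, for each fixed weight $w\le 9$, as a finite linear-algebra problem inside the $\Q$-algebra $\calA_2$. By definition $\FMZ_{w,2}$ is spanned by the values $\zeta_{\calA_2}(\bfs)$ as $\bfs$ ranges over all $2^{w-1}$ compositions of $w$ into positive parts, so it suffices to produce enough $\Q$-linear relations to collapse this spanning set onto the listed generators. I would assemble the relations from the four tools available in superbity $2$: the finite stuffle relations coming from Proposition~\ref{prop:zetaStuffle} (products of lower-weight FMZVs, expanded via \eqref{equ:worFormStuff}); the reversal relations \eqref{equ:reversal2} of Theorem~\ref{thm:reversal2}; the v-duality \eqref{equ:dualityLevel2} of Theorem~\ref{thm:FMZVv-dualsuperb=2}; and the star/non-star conversion \eqref{equ:FMZVSH} of Theorem~\ref{thm:Hoff}, which lets me pass freely between $\zeta_{\calA_2}$ and $\zeta^\star_{\calA_2}$. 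The explicit depth-one and depth-two evaluations of Theorems~\ref{thm:homogeneousWols2} and \ref{thm:FMZVdepth2superbity2} then pin down the low-depth values as rational multiples of $p\gb_{w+1}$ (and of Bernoulli-quotient products), supplying the inhomogeneous constants of the system.

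Two structural reductions make the system tractable. First, since $\dep(\bfs)+\dep(\bfs^\vee)=|\bfs|+1$ by \eqref{equ:lengthOfV-dual}, the duality \eqref{equ:dualityLevel2} trades a depth-$d$ value for one of depth $w+1-d$; hence it suffices to resolve the values of depth at most roughly half of $w$, where the explicit formulas and a manageable number of stuffle and reversal relations apply, and all remaining (high-depth) values follow by duality—exactly as the text already records for weights $6$ and $7$. Second, both \eqref{equ:reversal2} and \eqref{equ:dualityLevel2} contain a correction term $p\cdot\zeta_{\calA_2}(\cdots)$ of one weight higher, and multiplication by $p$ collapses that term to its residue modulo $p$, i.e.\ to a weight-$(w+1)$ \emph{superbity-one} value. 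I would therefore first run the superbity-one machinery of Sections~\ref{sec:FMZVdbsf}--\ref{sec:duality} through weight $w+1$ and reduce every correction term to a $\Q$-combination of monomials in the odd-index Bernoulli quotients $\gb_k$—for example Tables~\ref{Table:wt6FMZV} and \ref{Table:wt7FMZV} feed directly into the weight-$5$ and weight-$6$ superbity-two computations, precisely as in the proofs of Theorems~\ref{thm:wt5FMZVleve2IDs} and \ref{thm:wt6FMZVleve2IDs} and Corollary~\ref{cor:wt6FMZVgenerators}. After this substitution the entire collection of relations becomes a linear system over $\Q$ whose unknowns are the weight-$w$ FMZVs and whose coefficients involve the known quantities $p\gb_k$ and monomials in the $\gb_j$; Gaussian elimination (the deferred ``with the help of Maple'' step) then returns the rank together with an explicit reduced spanning set, which I would match against the stated generators.

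The main obstacle is organizational and recursive rather than conceptual: the superbity-two computation at weight $w$ genuinely requires the \emph{complete} superbity-one answer at weight $w+1$, so the two calculations must be carried out in tandem up to weight $10$ on the superbity-one side, and the composition count $2^{w-1}$ (reaching $256$ at $w=9$) makes careful automated bookkeeping indispensable. A second, more delicate point is that the theorem asserts only that the listed values \emph{generate} $\FMZ_{w,2}$; upgrading this to linear independence (a genuine basis, as the dimension part of Conjecture~\ref{conj:FMZVsuperbity1} would predict) would require non-vanishing and $\Q$-independence statements for the quotients $\gb_k$ in $\calA_1$, such as $\gb_5\ne 0$, which—as the discussion following Theorem~\ref{thm:SMZVgenerateMZV} emphasizes—lie far beyond current reach. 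Accordingly I would prove only the spanning direction, leaving the independence of the displayed generators unasserted, and certify the generating sets by the elimination described above.
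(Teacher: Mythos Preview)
Your plan is essentially what the paper does: the theorem is presented there as a Maple computation, with no written proof beyond the remark ``Using Maple, one can compute\ldots'' and the worked weight-$5$ and weight-$6$ cases (Theorems~\ref{thm:wt5FMZVleve2IDs}, \ref{thm:wt6FMZVleve2IDs}, Corollary~\ref{cor:wt6FMZVgenerators}) that your proposal explicitly invokes as prototypes. Your identification of the four superbity-$2$ tools, the recursive dependence on superbity-$1$ data one weight higher via the $p\cdot(\text{weight }w{+}1)$ correction terms in \eqref{equ:reversal2} and \eqref{equ:dualityLevel2}, and the restriction to the spanning direction only, all match the paper's implicit argument.

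One small caveat: in the paper's own weight-$5$ proof an \emph{external} identity (the closed form for $H^\star_{p-1}(\{2\}^a,3,\{2\}^b)$ from \cite[Theorem~2.1]{HessamiPilehrood2Ta2013}) is imported to settle \eqref{equ:wt5superbity2z32}, so the four intrinsic tools you list may not by themselves suffice at every weight; you should be prepared to call on such auxiliary congruences as additional inputs to the linear system.
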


We expect that the bold-faced FMZVs are not really needed
for generating the corresponding spaces because of
the the following conjectured relations in $\calA_2$ which
we have verified numerically for the
first 1000 primes..
Setting $\zeta_2=\zeta_{\calA_2}$, then
{\allowdisplaybreaks
\begin{align*}
68\bfzt_2(2,3,1,1)=&\,-480\zeta_2(\{1\}^5,2)+716\zeta_2(\{1\}^3,4)-843\zeta_2(1,6)\\
34\bfzt_2(3,1,1,2)=&\,-585\zeta_2(\{1\}^5,2)+998\zeta_2(\{1\}^3,4)-1029\zeta_2(1,6)\\
68\bfzt_2(3,3,1)=&\,1730\zeta_2(\{1\}^5,2)-2480\zeta_2(\{1\}^3,4)+2319\zeta_2(1,6)\\
12\bfzt_2(5,1,1,1)=&\,-73\zeta_2(\{1\}^6,2)+12\zeta_2(1,1,6)-3\zeta_2(1,2,5)\\
84\bfzt_2(3,2,1,3)=&\,-995\zeta_2(1,8)+952\zeta_2(\{1\}^3,6)
-1288\zeta_2(\{1\}^5,4)\\
   &\, -437\zeta_2(\{1\}^7,2)-624\zeta_2(1,2,6)\\
924\bfzt_2(2,1,3,3)=&\,2509\zeta_2(1,8)-6356\zeta_2(\{1\}^3,6)
+5432\zeta_2(\{1\}^5,4)\\
    &\,-180\zeta_2(\{1\}^7,2)+801\zeta_2(1,2,6)\\
924\bfzt_2(2,3,1,3)=&\,-9424\zeta_2(1,8)-5824\zeta_2(\{1\}^3,6)
+11368\zeta_2(\{1\}^5,4)\\
    &\,-3807\zeta_2(\{1\}^7,2)+852\zeta_2(1,2,6)\\
36\bfzt_2(1,2,5,1)=&\,358\zeta_2(1,8)-248\zeta_2(\{1\}^3,6)
+464\zeta_2(\{1\}^5,4)\\
    &\,+5\zeta_2(\{1\}^7,2)+147\zeta_2(1,2,6).
\end{align*}}

In general, we have the following dimension conjecture in superbity 2
parallel to FMZVs of superbity 1.
See Table~\ref{Table:PadovanTable} for numerical support.
\begin{conj}\label{conj:FMZVsuperbity2}
Let $w$ be any positive integer. Set $d_{0,2}=1$ and $d_{w,2}=\dim \FMZ_{w,2}$.
Then $d_{1,2}=0,d_{2,2}=1$ and for all $w\ge 3$ we have
$$d_{w,2}=d_{w-2,2}+d_{w-3,2}.$$
\end{conj}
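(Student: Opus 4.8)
Conjecture~\ref{conj:FMZVsuperbity2} asserts $\dim\FMZ_{w,2}=\delta_w$, where $\delta_w$ is defined by $\delta_0=1,\delta_1=0,\delta_2=1$ and the Padovan recursion $\delta_w=\delta_{w-2}+\delta_{w-3}$. The organizing principle for an attack is the filtration of $\calA_2$ by powers of $p$. Since $p^2=0$ in $\calA_2$, reduction modulo $p$ is a ring surjection $\calA_2\twoheadrightarrow\calA_1$ with kernel $p\calA_2$, carrying $\zeta_{\calA_2}(\bfs)$ to $\zeta_{\calA_1}(\bfs)$; restricting to weight $w$ gives a short exact sequence of $\Q$-vector spaces
\begin{equation*}
0\lra \FMZ_{w,2}\cap p\calA_2\lra \FMZ_{w,2}\lra \FMZ_{w,1}\lra 0 .
\end{equation*}
Moreover, multiplication by $p$ is injective on $\FMZ_{w+1,1}$: for $x=\sum_i c_i\zeta_{\calA_1}(\bft_i)$ the element $px:=p\sum_i c_i\zeta_{\calA_2}(\bft_i)$ is well defined (two lifts differ by an element of $p\calA_2$, which $p$ annihilates), and $px=0$ forces $\sum_i c_i\zeta_{\calA_2}(\bft_i)\in p\calA_2$, i.e.\ $x=0$ in $\calA_1$. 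Under the grading convention assigning weight $-1$ to $p$, the subspace $p\cdot\FMZ_{w+1,1}$ sits in weight $w$. The first thing I would prove is the structural identity
\begin{equation*}
\FMZ_{w,2}\cap p\calA_2 = p\cdot\FMZ_{w+1,1},
\end{equation*}
which, with the sequence above and the injectivity just noted, yields the clean reformulation $\dim\FMZ_{w,2}=\dim\FMZ_{w,1}+\dim\FMZ_{w+1,1}$; a short induction shows this coincides with $\delta_w$ once the superbity-one dimensions obey Conjecture~\ref{conj:FMZVsuperbity1}.

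To establish the structural identity I would exploit that all the superbity-two relations produce $p$-corrections of exactly the expected shape. The defect in the reversal relation (Theorem~\ref{thm:reversal2}) is $p\sum_i|s_i|\zeta_{\calA_2}(\bfs\oplus\bfe_i)$, the defect in the duality relation (Theorem~\ref{thm:FMZVv-dualsuperb=2}) is $p\sum_{\bft\preceq\bfs}\zeta_{\calA_2}(1,\bft)$, and the depth-two evaluations (Theorem~\ref{thm:FMZVdepth2superbity2}) are visibly $p$ times superbity-one quantities of weight $w+1$; since multiplication by $p$ depends only on the reduction modulo $p$, each correction lies in $p\cdot\FMZ_{w+1,1}$, giving the inclusion $\FMZ_{w,2}\cap p\calA_2\subseteq p\cdot\FMZ_{w+1,1}$. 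For the reverse inclusion I would have to realize every $p\,\zeta_{\calA_1}(\bft)$ with $|\bft|=w+1$ as a $\Q$-combination of genuine weight-$w$ values $\zeta_{\calA_2}(\bfs)$, by running the double shuffle relations (Definition~\ref{defn:FESdbsf}) together with the concatenation relations (Theorem~\ref{thm:Hoff}), as in the cases verified up to weight $9$. The first serious difficulty is \emph{uniformity}: turning this finite, weight-by-weight reduction into a statement valid for all $w$. The natural route is to build a superbity-two analogue of the Kaneko--Zagier picture, namely an explicit graded algebra map from $\FMZ_{\bullet,2}$ to a quotient of the MZV algebra whose dimensions are understood, in the spirit of Conjecture~\ref{conj:KanekoZagier} and Yasuda's surjectivity result (Theorem~\ref{thm:SMZVgenerateMZV}).

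The genuine obstruction, however, is the lower bound, and I do not expect it to be within reach. Even granting the structural identity, the equality $\dim\FMZ_{w,2}=\delta_w$ rests on the superbity-one dimension equality of Conjecture~\ref{conj:FMZVsuperbity1}, which is tied through Conjecture~\ref{conj:KanekoZagier} to Zagier's (open) dimension conjecture for MZVs. Unwinding the explicit evaluations, the required linear independence of the proposed generators is equivalent to non-vanishing and algebraic-independence statements for the $\calA_2$-Bernoulli numbers $p\gb_k$ and their products, precisely the content of Conjecture~\ref{conj:A_1BernoulliAlgIndpt}. As the discussion following Conjecture~\ref{conj:KanekoZagier} stresses, one cannot even prove $\gb_5\ne0$ unconditionally, since that alone would force $B_{p-5}\not\equiv0\pmod p$ for infinitely many primes $p$. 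My realistic expectation is therefore that the upper bound $\dim\FMZ_{w,2}\le\delta_w$ can be reduced, by the filtration argument above, to the superbity-one upper bound plus the structural identity, whereas the matching lower bound must remain conditional on deep transcendence conjectures --- exactly mirroring the status of the superbity-one Conjecture~\ref{conj:FMZVsuperbity1}.
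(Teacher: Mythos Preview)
The statement you are addressing is a \emph{conjecture} in the paper, not a theorem: the paper offers no proof and claims none, only numerical support (Table~\ref{Table:PadovanTable}) and the heuristic parallel with Conjectures~\ref{conj:FMZVsuperbity1} and~\ref{conj:KanekoZagier}. So there is no ``paper's own proof'' to compare against, and your write-up is best read as a strategy sketch rather than a proof. On that reading, your overall diagnosis is sound and in fact agrees with the paper's stance: the lower bound is inaccessible for the same reasons the superbity-one case is, ultimately because even $\gb_5\ne 0$ is open.

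That said, one step in your sketch is not justified as written. You argue for the inclusion $\FMZ_{w,2}\cap p\calA_2\subseteq p\cdot\FMZ_{w+1,1}$ by noting that the $p$-correction terms in the reversal, duality, and depth-two formulas all land in $p\cdot\FMZ_{w+1,1}$. But an arbitrary element of $\FMZ_{w,2}\cap p\calA_2$ is a $\Q$-linear combination $\sum_i c_i\,\zeta_{\calA_2}(\bfs_i)$ that \emph{happens} to lie in $p\calA_2$; there is no a priori reason such a combination must be expressible via those particular relations, so the shape of their correction terms does not by itself control it. What you would actually need is that every $\Q$-linear relation among the $\zeta_{\calA_1}(\bfs_i)$ lifts to a relation among the $\zeta_{\calA_2}(\bfs_i)$ with defect in $p\cdot\FMZ_{w+1,1}$ --- essentially that the known relation families generate all superbity-one relations, which is part (iii) of Conjecture~\ref{conj:FMZVsuperbity1} and is itself open. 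So even the ``structural identity'' you isolate is conditional, not just the lower bound. Your conclusion that the full conjecture is out of reach is correct; the dependency graph is simply a bit larger than you indicate.
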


Now let us look at some numerical data given
in Table ~\ref{Table:PadovanTable}.
\begin{table}[!h]
{
\begin{center}
\begin{tabular}{|c|c|c|c|c|c|c|c|c|c|c|c|c|c| } \hline
$w$ &0 & 1 & 2& 3& 4& 5& 6& 7& 8& 9& 10& 11& 12\\ \hline
$\dim \MZV_w$ &1 & 0 &1 & 1 & 1 & 2& 2& 3 & 4 &5 &7 &9 &12\\ \hline
$\dim \FMZ_{w,1}$ &1 & 0 &0 & 1 & 0 & 1& 1& 1 & 2 &2 &3 &4 & 5 \\ \hline
$\dim \FMZ_{w,2}$ &1 & 0 &1 & 1 & 1 & 2& 2& 3 & 4 &5 &7 &9 &12\\ \hline
$\dim \FMZ_{w,3}$ &1 & 1 &1 &2 & 2 & 3& 4& 5 &7 &9 &12 &16 & 21 \\ \hline
$\dim \FMZ_{w,4}$ &2 & 1 &\textbf{1} & 3 & 3 & 5& 6& 8 & 11 &14 &19 &25  &  \\ \hline
$\dim \FMZ_{w,5}$ &2 & \textbf{1} &\textbf{2} & \textbf{3} & 5 & 7&  9& 12&  16&21 &  &  &  \\ \hline
\end{tabular}
\end{center}
}
\caption{Numerically verified conjectural dimensions of $\MZV_w$ and $\FMZ_{w,\ell}$ for $\ell\le 5$. Bold-faced numbers
are all one less than they're supposed to be by the conjectured recurrence formula.}
\label{Table:PadovanTable}
\end{table}

Note that at every superbity the conjectured recurrence relation
is always $f_{w}=f_{w-2}+f_{w-3}$ in general.
The only exceptions occur at superbities at least 4
but only for very small weights where we have ``too few'' values.
For example, at superbity 4 and weight 2 the dimension is supposed
to be 2, but because of the trivial relation
\begin{equation*}
\zeta_{\calA_4}(2)+2\zeta_{\calA_4}(1,1)= \zeta_{\calA_2}(1)^2=0\in \calA_4
\end{equation*}
the dimension is decreased to only 1. For another example,
at superbity $\ell=5$ or 6 we have another trivial relation
$$6\zeta_{\calA_\ell}(1,1,1)+\zeta_{\calA_\ell}(1,2)
+\zeta_{\calA_\ell}(2,1)+\zeta_{\calA_\ell}(3)=\zeta_{\calA_\ell}(1)^3=0.$$

It might be possible to modify the map $f_{\rm KZ}$ in Conjecture~\ref{conj:KanekoZagier}
to prove an isomorphism of at superbity $\ell|$.
\begin{conj}
For all positive integers $w$ and $\ell$, we have
$$\FMZ_{w,\ell} \cong \frac{\MZV_{w}}{\zeta(2)\MZV_{w-2}} \oplus \frac{\MZV_{w+1}}{\zeta(2)\MZV_{w-1}} \oplus \cdots  \oplus
\frac{\MZV_{w+\ell-1}}{\zeta(2)\MZV_{w+\ell-3}} .$$
\end{conj}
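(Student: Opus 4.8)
The plan is to reduce the general superbity statement to the superbity-one case (Conjecture~\ref{conj:KanekoZagier}) via the $p$-adic filtration on $\calA_\ell$, proceeding by induction on $\ell$. The heuristic is that an element of $\calA_\ell$ is a truncated series $a_0+a_1p+\dots+a_{\ell-1}p^{\ell-1}\pmod{p^\ell}$, and that for a weight-$w$ FMZV the coefficient $a_j$ of $p^j$ is governed by motivic data of weight $w+j$: each power of $p$ raises the weight by one, exactly as Faulhaber's formula \eqref{equ:sump} converts a factor of $p$ into one unit of Bernoulli weight. This is visible in Theorem~\ref{thm:homogeneousWols2}, where the $p$-coefficient of the weight-$w$ value $\zeta_{\calA_2}(\{s\}^d)$ is $(-1)^{d-1}s\gb_{ds+1}$ and the $p^2$-coefficient of the superbity-three value is $(-1)^d\tfrac{s(ds+1)}{2}\gb_{ds+2}$, involving $\calA_1$-Bernoulli numbers of weight $w+1$ and $w+2$ respectively.

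Concretely, I would equip $\FMZ_{w,\ell}$ with the decreasing filtration $F^j=\FMZ_{w,\ell}\cap p^j\calA_\ell$ and study the reduction $\pi_\ell\colon\calA_\ell\to\calA_{\ell-1}$ modulo $p^{\ell-1}$. Since the defining sums agree, $\pi_\ell$ carries $\FMZ_{w,\ell}$ onto $\FMZ_{w,\ell-1}$, giving a short exact sequence of $\Q$-vector spaces
\begin{equation*}
0\longrightarrow K_{w,\ell}\longrightarrow\FMZ_{w,\ell}\overset{\pi_\ell}{\longrightarrow}\FMZ_{w,\ell-1}\longrightarrow0,
\end{equation*}
where $K_{w,\ell}=\FMZ_{w,\ell}\cap p^{\ell-1}\calA_\ell$ consists of the weight-$w$ values vanishing modulo $p^{\ell-1}$. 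The central new claim is that $\alpha\mapsto\alpha/p^{\ell-1}\bmod p$ identifies $K_{w,\ell}$ with $\FMZ_{w+\ell-1,1}$, hence with $\MZV_{w+\ell-1}/\zeta(2)\MZV_{w+\ell-3}$ by the base case. Granting this, and using that every filtration of $\Q$-vector spaces splits, induction yields
\begin{equation*}
\FMZ_{w,\ell}\cong\FMZ_{w,\ell-1}\oplus\FMZ_{w+\ell-1,1}\cong\bigoplus_{j=0}^{\ell-1}\frac{\MZV_{w+j}}{\zeta(2)\MZV_{w+j-2}}.
\end{equation*}
The bookkeeping is consistent: the Padovan recurrence $\dim\MZV_{w+1}=\dim\MZV_{w-1}+\dim\MZV_{w-2}$ forces $\dim\bigoplus_{j=0}^{\ell-1}(\MZV_{w+j}/\zeta(2)\MZV_{w+j-2})$ to obey $d_{w,\ell}=d_{w-2,\ell}+d_{w-3,\ell}$, matching Conjecture~\ref{conj:FMZVsuperbity2} and Table~\ref{Table:PadovanTable}.

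To prove the weight-shift identification $K_{w,\ell}\cong\FMZ_{w+\ell-1,1}$—the heart of the argument—I would exploit the higher-superbity reversal and duality relations. Theorem~\ref{thm:reversal2} shows that the reversal-antisymmetric part of a superbity-two value equals $p\sum_i|s_i|\zeta_{\calA_2}(\bfs\oplus\bfe_i)$, whose arguments $\bfs\oplus\bfe_i$ have weight $|\bfs|+1$; likewise the $p$-defect of $\vee$-duality in \eqref{equ:dualityLevel2} is $p\sum_{\bft\preceq\bfs}\zeta_{\calA_2}(1,\bft)$ in weight $w+1$. The goal is to promote these to a uniform statement that, after dividing by $p^{\ell-1}$ and reducing mod $p$, the leading $p$-adic coefficient of a weight-$w$ value is itself a weight-$(w+\ell-1)$ FMZV of superbity one—ideally by producing a weight-raising map $D\colon\gr^j\FMZ_{w,\ell}\to\FMZ_{w+j,1}$ that realizes multiplication by $p$ on the associated graded. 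Running the shifted Kaneko–Zagier isomorphism at weight $w+\ell-1$ then closes the step, and one would upgrade the vector-space isomorphism to the expected algebra compatibility by checking that $D$ intertwines the stuffle product through Proposition~\ref{prop:zetaStuffle}.

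The main obstacle is twofold. First, the base case $\ell=1$ is Conjecture~\ref{conj:KanekoZagier} itself, which is open: as noted there, even well-definedness and injectivity of $f_{\rm KZ}$ in weight $5$ are unknown, being entangled with the nonvanishing of $\gb_5$ and with transcendence questions such as $\zeta(2)\zeta(3)/\zeta(5)\notin\Q$; any argument along these lines is therefore necessarily conditional on Kaneko–Zagier. Second, and genuinely new, is the weight-shift claim $K_{w,\ell}\cong\FMZ_{w+\ell-1,1}$: the examples and the superbity-two relations make the shift transparent in low depth, but establishing it uniformly requires controlling every $p$-adic coefficient of every weight-$w$ FMZV, which at present goes well beyond the reversal and duality relations available in higher superbity. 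Surjectivity of the shifted map is supplied by Yasuda's Theorem~\ref{thm:SMZVgenerateMZV}, so the real difficulty is injectivity together with the precise matching of the $p$-adic filtration on $\FMZ_{w,\ell}$ with the summand grading on the right-hand side.
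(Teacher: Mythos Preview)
The statement is a \emph{conjecture}, and the paper offers no proof of it: the only supporting material is the numerical evidence in Table~\ref{Table:PadovanTable} and the heuristic remark immediately following the conjecture that the second component of the putative isomorphism ``should involve some variation of $\zeta_\sha^\Sy(\bfs+\bfe_j)$'', together with the superbity-two reversal relation~\eqref{equ:rev}. There is therefore no proof in the paper to compare your proposal against.

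Your proposal is not a proof either, and you say so: it is a strategy conditional on the Kaneko--Zagier conjecture (the base case $\ell=1$) and on the weight-shift identification $K_{w,\ell}\cong\FMZ_{w+\ell-1,1}$, both of which you correctly flag as open. The $p$-adic filtration approach is natural and aligns with the paper's own heuristic, so as an outline it is reasonable. Two points deserve correction, however. First, Yasuda's Theorem~\ref{thm:SMZVgenerateMZV} does \emph{not} supply surjectivity of your weight-shift map: Yasuda shows that symmetrized MZVs generate $\MZV_w$, which bears on the surjectivity of $f_{\rm KZ}$ in Conjecture~\ref{conj:KanekoZagier}, but says nothing about whether every superbity-one FMZV of weight $w+\ell-1$ arises as the leading $p^{\ell-1}$-coefficient of some superbity-$\ell$ FMZV of weight $w$. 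That surjectivity is part of the unproved weight-shift claim itself. Second, the paper's own Table~\ref{Table:PadovanTable} records exceptions (the bold-faced entries) at small $w$ and $\ell\ge 4$, caused by trivial relations such as $\zeta_{\calA_4}(1)^2=0$; for instance at $(w,\ell)=(2,4)$ the table gives $\dim\FMZ_{2,4}=1$ while the right-hand side of the conjecture has dimension $2$. Any argument along your lines would have to explain where the short exact sequence fails to split into the predicted pieces in these degenerate cases, so the statement itself needs qualification before a proof strategy can succeed.
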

Clearly the image of $\zeta_{\calA_2}(\bfs)$
in the first component should be the symmetrized MZV $\zeta_\sha^\Sy(\bfs)$, and
the second components should involve some variation of $\zeta_\sha^\Sy(\bfs+\bfe_j)$
where $\bfs+\bfe_j$ means the $j$th component of $\bfs$ is increased by 1.
But we have to keep in mind the reversal
relation at superbity 2 is given by the following formula
\begin{equation}\label{equ:rev}
(-1)^{|\bfs|} \zeta_{\calA_2}(s_d,\dots,s_1)= \zeta_{\calA_2}(\bfs)+
p \sum_{j=1}^d s_j \zeta_{\calA_2}(\bfs+\bfe_j).
\end{equation}

\section{Finite Euler sums of superbity 1 and 2}\label{sec:dimConjES}
In \cite{TaurasoZh2010} we obtained a few results for FES. More similar
results can be found in \cite{HessamiPilehrood2Ta2013}.
We now turn to FES in arbitrary depth. As in Conjecture~\ref{conj:KanekoZagier}
we will link them to a symmetrized version of the Euler sums.
For any $s_1,\ldots, s_d\in\db$,
we may define the symmetrized Euler sums
\begin{align}
\zeta_\sha^\Sy(s_1, \ldots, s_d)=&\sum_{i=0}^d
\left(\prod_{j=1}^i (-1)^{s_j} \sgn (s_j)\right) \zeta_\sha(\bfp(s_i,\dots,s_1)) \zeta_\sha(\bfp(s_{i+1},\dots,s_d)), \label{equ:SESsha}\\
\zeta_\ast^\Sy(s_1, \ldots, s_d)=&\sum_{i=0}^d
\left(\prod_{j=1}^i (-1)^{s_j} \sgn (s_j)\right) \zeta_\ast(s_i,\dots,s_1) \zeta_\ast(s_{i+1},\dots,s_d), \label{equ:SESast}
\end{align}
where $\zeta_\sha$ and $\zeta_\ast$ are regularized
values given by Theorem~\ref{thm:2wayReg}. Similar to
Conjecture~\ref{conj:KanekoZagier} we propose

\begin{conj}\label{conj:FESdim}
Let $f_w=\dim_\Q \FES_{w,1}$ for $w\ge 1$. Then
\begin{equation*}
    \sum_{w=1}^\infty f_w t^w=\frac{t}{1-t-t^2}.
\end{equation*}
Moreover, $\ES_{w,1}$ has a basis
\begin{equation*}
    \Big\{\zeta_{\calA_1}(\bar1,a_1,\dots,a_r):
a_1,\dots,a_r\in\{1,2\}, a_1+\dots+a_r=w-1\Big\}.
\end{equation*}
\end{conj}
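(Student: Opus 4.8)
The plan is to transport the Kaneko--Zagier program (Conjecture~\ref{conj:KanekoZagier}) from finite multiple zeta values to finite Euler sums, using the symmetrized Euler sums of \eqref{equ:SESsha}--\eqref{equ:SESast} as the bridge. I would first propose the Euler-sum analog of $f_{\rm KZ}$, a $\Q$-algebra isomorphism
\begin{equation*}
f_{\rm KZ}^{\ES}:\ \FES_{w,1}\ \lra\ \ES_w/\zeta(2)\ES_{w-2},\qquad
\zeta_{\calA_1}(\bfs)\ \lmaps\ \zeta_\sha^\Sy(\bfs).
\end{equation*}
Granting this isomorphism together with Broadhurst's Conjecture~\eqref{equ:BroadhurstConj}, the dimension formula is immediate. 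Writing $F_w$ for the Fibonacci numbers with $F_1=F_2=1$, Broadhurst gives $\dim_\Q\ES_w=F_{w+1}$, so (using that multiplication by $\zeta(2)$ is injective, as it is conjecturally) for all $w\ge 1$
\begin{equation*}
\dim_\Q\FES_{w,1}=\dim_\Q\ES_w-\dim_\Q\ES_{w-2}=F_{w+1}-F_{w-1}=F_w,
\end{equation*}
whose generating series is exactly $t/(1-t-t^2)$. The proposed basis is then the natural one: the compositions of $w-1$ into parts in $\{1,2\}$ are counted by $F_w$, matching the dimension, and the leading letter $\bar1$ encodes the level-two (alternating) structure just as the prefix $(1,2)$ does for FMZVs in Conjecture~\ref{conj:FMZVsuperbity1}.

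The upper bound --- that $\{\zeta_{\calA_1}(\bar1,a_1,\dots,a_r): a_i\in\{1,2\},\ \sum a_i=w-1\}$ spans $\FES_{w,1}$ --- I would attack directly and effectively, with no recourse to Euler sums. The inputs are the double shuffle relations of Definition~\ref{defn:FESdbsf}: the stuffle relations (Proposition~\ref{prop:zetaStuffle}) and the new shuffle relations of Theorem~\ref{thm:FESshuffle}, reinforced by the reversal relations (Theorem~\ref{thm:reversal}) and the concatenation relations (Theorem~\ref{thm:Hoff}). The strategy is an induction on weight: one uses the stuffle product to dispose of any product of lower-weight FESs, the shuffle relations of Theorem~\ref{thm:FESshuffle} to pass between level-one and level-two words and so control the sign pattern --- collapsing it to a single leading $\bar1$ --- the reversal relations to halve the admissible arguments, and the interplay of shuffle against stuffle to eliminate entries exceeding $2$, exactly as in the reduction of MZVs to Hoffman elements. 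As with the FMZV statement verified by machine in weights $3$--$13$, I expect each fixed weight to be settled by a finite Maple computation, and the general case to follow once the reductions are arranged into a confluent rewriting system.

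The genuine difficulty, and the part that remains conditional, is everything downstream of the proposed isomorphism. Well-definedness of $f_{\rm KZ}^{\ES}$ requires that every double shuffle relation among FESs maps to a valid relation among symmetrized Euler sums modulo $\zeta(2)\ES_{w-2}$; this is precisely the service performed by Theorem~\ref{thm:FESshuffle}, the FES incarnation of Jarossay's shuffle identity, once supplemented by an Euler-sum analog of Proposition~\ref{prop:Fait1.8} (finiteness of $\zeta_\sha^\Sy$ and $\zeta_\sha^\Sy-\zeta_\ast^\Sy\in\zeta(2)\ES_{w-2}$). Surjectivity onto the quotient demands an Euler-sum analog of Yasuda's Theorem~\ref{thm:SMZVgenerateMZV}, and pinning down the Hoffman-type basis demands an Euler-sum analog of Brown's generation theorem \cite{Brown2012}; each is a level-two shadow of a motivic theorem and is itself substantial. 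The true obstacle, however, is injectivity, i.e.\ the linear independence of the candidate basis: as already flagged after Conjecture~\ref{conj:KanekoZagier}, even proving that a single symmetrized value is nonzero in the quotient would force currently unprovable nonvanishing statements for Bernoulli-type numbers modulo $p$, and separating all $F_w$ candidates carries the full weight of Broadhurst's conjecture. The honest outcome is therefore a conditional theorem: assuming Broadhurst's Conjecture~\eqref{equ:BroadhurstConj} (established by Deligne \cite{Deligne2010} modulo a variant of Grothendieck's period conjecture), the injectivity of multiplication by $\zeta(2)$, and the two generation statements above, both the dimension formula and the explicit basis of Conjecture~\ref{conj:FESdim} follow.
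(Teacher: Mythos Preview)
The statement is a \emph{conjecture}, and the paper offers no proof of it. What the paper does provide is (i) the heuristic mechanism you describe --- Conjecture~\ref{conj:KanekoZagierAltVersion}, the Euler-sum analog of the Kaneko--Zagier isomorphism, which together with Broadhurst's conjecture \eqref{equ:BroadhurstConj} would yield the Fibonacci dimensions --- and (ii) partial numerical evidence: a Maple verification that the proposed set spans $\FES_{w,1}$ for $w\le 7$, and a check of the dimensions for the first 1000 primes (Table~\ref{Table:FibonacciTable}). Your proposal is not a proof either, as you candidly acknowledge in the final paragraph; it is the same conditional argument the paper gestures at, worked out more explicitly.

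Two specific points of contact with the paper are worth flagging. First, on the spanning side, the paper reports that the double shuffle relations of Definition~\ref{defn:FESdbsf} alone are \emph{insufficient}: already in weight two the reversal relation $\zeta_{\calA_1}(1,\bar1)+\zeta_{\calA_1}(\bar1,1)=0$ is not a consequence of double shuffle, and further such examples appear in weight three. You do list reversal relations among your inputs, but your expectation that ``the general case [will] follow once the reductions are arranged into a confluent rewriting system'' goes beyond anything the paper claims or supports; the paper stops at $w\le 7$ and treats each weight as a separate finite computation. Second, the paper notes that it is not known whether the symmetrized Euler sums generate $\ES_w$ (the level-two analog of Yasuda's theorem is open), so the surjectivity step in your conditional chain is itself an open problem rather than a routine extension.

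In short: your proposal matches the paper's own heuristic motivation and correctly identifies the conditional dependencies, but there is no proof here to compare against --- the paper states this as a conjecture and leaves it as such.
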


\begin{conj}\label{conj:KanekoZagierAltVersion}
There is an isomorphism
\begin{align*}
f_\FES:  \FES_{w,1}& \lra \frac{\ES_w}{\zeta(2)\ES_{w-2}} , \\
 \zeta_{\calA_1}(\bfs) & \lmaps \zeta_\ast^\Sy(\bfs).
\end{align*}
\end{conj}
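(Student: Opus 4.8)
The plan is to follow the three-step template that governs the parallel statement for FMZVs, Conjecture~\ref{conj:KanekoZagier}: one must show that $f_\FES$ is well defined, surjective, and injective. I would fix the weight $w$, work with the $\Q$-spaces $\FES_{w,1}$ and $\ES_w/\zeta(2)\ES_{w-2}$, and first replace $\zeta_\ast$ throughout by $\zeta_\sha$. This requires the Euler-sum analog of Proposition~\ref{prop:Fait1.8}: that both $\zeta_\ast^\Sy(\bfs)$ and $\zeta_\sha^\Sy(\bfs)$ are finite (the formal parameter $T$ of Theorem~\ref{thm:2wayReg} cancels under the symmetrization \eqref{equ:SESsha}) and that their difference lies in $\zeta(2)\ES_{w-2}$. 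Granting this, the two candidate maps agree in the target quotient, so I may use whichever is structurally convenient.

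Surjectivity is the most tractable step. It demands an Euler-sum version of Yasuda's Theorem~\ref{thm:SMZVgenerateMZV}, namely that $\{\zeta_\sha^\Sy(\bfs):|\bfs|=w\}$ already spans $\ES_w$. I would port Yasuda's argument, replacing the two-letter alphabet by the level-two alphabet $\setX_2$ and carrying the signs $\sgn(s_j)$ that enter \eqref{equ:SESsha}. Assuming Conjecture~\ref{conj:FESdim}, the count $f_w=\dim_\Q\FES_{w,1}$ matches $\dim_\Q\ES_w/\zeta(2)\ES_{w-2}$, so surjectivity together with this dimension match would formally upgrade to bijectivity; but the dimensions themselves are only conjectural.

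The crux is well-definedness: if $\sum_i c_i\zeta_{\calA_1}(\bfs_i)=0$ in $\calA_1$, then one must prove $\sum_i c_i\zeta_\sha^\Sy(\bfs_i)\in\zeta(2)\ES_{w-2}$. My approach is to match the two structural families of relations on each side. On the FES side the relations come from the stuffle product (Proposition~\ref{prop:zetaStuffle}) and from the shuffle-type identities of Theorem~\ref{thm:FESshuffle}; on the symmetrized side, $\zeta_\ast^\Sy$ inherits stuffle relations from the stuffle homomorphism $\zeta_\ast$, while $\zeta_\sha^\Sy$ satisfies exactly the shuffle identities of \cite[Theorem~1.7]{Jarossay2014}, which is the term-by-term mirror of Theorem~\ref{thm:FESshuffle}. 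Using the map $\bfp$ and the sign-reversal map $\tau$ to align these two shuffle identities, I would check that under $f_\FES$ each generating stuffle and each generating shuffle relation of FESs maps into $\zeta(2)\ES_{w-2}$. If one assumes the double-shuffle relations generate all $\Q$-linear relations among FESs — the FES analog of Conjecture~\ref{conj:FMZVsuperbity1}(iii) — then matching generators suffices and well-definedness follows.

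The main obstacle is that well-definedness and injectivity cannot be made unconditional, which is precisely why the statement remains a conjecture. Well-definedness presupposes that double shuffle exhausts all relations among FESs, which we cannot prove; and verifying even one such relation in low weight would force nonvanishing statements about $\calA_1$-Bernoulli numbers of the type $\gb_5\ne 0$ that are far out of reach, exactly as in the discussion following Conjecture~\ref{conj:KanekoZagier}. Injectivity is harder still: it would require transcendence and $\Q$-linear independence input for Euler sums, of the Deligne--Broadhurst flavor behind \eqref{equ:BroadhurstConj} and \cite{Deligne2010}, to rule out that a genuine relation among the $\zeta_\sha^\Sy(\bfs)$ modulo $\zeta(2)\ES_{w-2}$ could lift to a spurious relation in $\calA_1$. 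I therefore expect the realistic deliverable to be a conditional isomorphism — bijectivity modulo the FES double-shuffle conjecture and a period conjecture — supported by numerical confirmation in small weights, rather than an unconditional proof.
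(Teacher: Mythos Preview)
The statement is a \emph{conjecture}; the paper offers no proof, only numerical verification up to weight~5 (conditional on Conjecture~\ref{conj:A_1BernoulliAlgIndpt}) and the remark that Jarossay's argument for Proposition~\ref{prop:Fait1.8} extends to level two. So there is no ``paper's own proof'' to compare against, and your closing paragraph correctly anticipates that an unconditional argument is out of reach.

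That said, your conditional strategy contains two concrete gaps that the paper itself flags. First, your well-definedness step rests on the hypothesis that the double shuffle relations of Definition~\ref{defn:FESdbsf} generate all $\Q$-linear relations among FESs. The paper explicitly disproves this: in the proof of the final theorem of \S\ref{sec:dimConjES} it is stated that ``the double shuffle relations \dots\ are insufficient to produce all the relations,'' and the subsequent example exhibits the reversal relation $\zeta_{\calA_1}(1,\bar1)+\zeta_{\calA_1}(\bar1,1)=0$ in weight~2 as not following from double shuffle. So at minimum you must also match reversal relations on both sides, and even then there is no conjecture on record asserting that double shuffle plus reversal is complete for FESs --- unlike Conjecture~\ref{conj:FMZVsuperbity1}(iii) for FMZVs.

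Second, your surjectivity step assumes Yasuda's argument ports to level two. The paper says the opposite: immediately after the conjecture it notes ``we don't know whether the space of Euler sums $\ES_n$ can be generated by symmetrized Euler sums.'' So the Euler-sum analog of Theorem~\ref{thm:SMZVgenerateMZV} is genuinely open, not a routine adaptation. Your outline is a reasonable heuristic for why the conjecture is plausible, but these two points mean it is strictly weaker as a conditional program than you present it.
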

We have checked this up to weight 5 under Conjecture~\ref{conj:A_1BernoulliAlgIndpt}
concerning the $\calA_1$-Bernoulli numbers and $\calA_1$-Fermat quotient $q_2$.

Since essentially the same proofs for Proposition~\ref{prop:Fait1.8}
given by Jarossay \cite{Jarossay2014}
work for Euler sums with the help of
level two Drinfeld associator, one may also  replace $\zeta_\ast$ by $\zeta_\sha$ in the conjecture. However, we don't know whether the space of Euler sums
$\ES_n$ can be generated by symmetrized Euler sums.

Conjecture~\ref{conj:KanekoZagierAltVersion} would imply that
$\dim_\Q \FES_{w,1}=F_w$ where $F_w$ are Fibonacci numbers:
$F_0=0, F_1=1, F_w=F_{w-1}+F_{w-2}$ for all $w\ge2$.
{}From what we have found so far we get the following table.
Let $\ES_w$ (resp.\ $\FES_{w,\ell}$) be the $\Q$-space
spanned by the Euler sums of weight $w$
(resp.\ finite Euler sums of weight $w$ and superbity $\ell$).

\begin{table}[!h]
{
\begin{center}
\begin{tabular}{|c|c|c|c|c|c|c|c|c|c| } \hline
$w$ &0 & 1 & 2& 3& 4& 5& 6& 7  \\ \hline
$\dim \ES_w$ &1 &1 &2 & 3 & 5 & 8& 13& 21    \\ \hline
$\dim \FES_{w,1}$ &1 &1 &1 &2 & 3 & 5 & 8& 13    \\ \hline
$\dim \FES_{w,2}$ &1 & 1 &2 & 4 & 7 & 12& 20& 33    \\ \hline
\end{tabular}
\end{center}
}
\caption{Conjectural dimensions of $\ES_w$ and $\FES_{w,\ell}$ for $\ell\le 2$.}
\label{Table:FibonacciTable}
\end{table}
In Table~\ref{Table:FibonacciTable}, we provide $\dim \ES_w$
for comparison purpose. We also verified
$\dim \FES_{w,1}$ and $\dim \FES_{w,2}$ in the table
numerically for the first 1000 primes.

As further support of Conjecture~\ref{conj:FESdim}, we have
\begin{thm}
For all $w\le 7$, We have
\begin{equation*}
 \FES_w =\big\langle \zeta_{\calA_1}(\bar1,a_1,\dots,a_r):
    a_1,\dots,a_r\in\{1,2\}, a_1+\cdots+a_r=w-1 \big\rangle.
\end{equation*}
\end{thm}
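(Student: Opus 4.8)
The plan is to prove the two inclusions separately, the nontrivial one being a finite linear-algebra verification carried out once for each weight $w\in\{1,\dots,7\}$. Write $\calB_w$ for the proposed spanning set $\{\zeta_{\calA_1}(\bar1,a_1,\dots,a_r): a_i\in\{1,2\},\ a_1+\cdots+a_r=w-1\}$; every such tuple $(\bar1,a_1,\dots,a_r)$ has weight $1+(w-1)=w$, so each element of $\calB_w$ is itself a weight-$w$ FES. Hence $\langle\calB_w\rangle\subseteq\FES_w$ is immediate, and it suffices to show that every generator $\zeta_{\calA_1}(\bfs)$ with $\bfs\in\db^d$ and $|\bfs|=w$ lies in $\langle\calB_w\rangle$.

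First I would enumerate, for each fixed $w\le 7$, the $2\cdot3^{w-1}$ generators $\zeta_{\calA_1}(\bfs)$ (one signed composition of $w$ per symbol), and assemble all the proven linear relations among them: the linear stuffle relations of Definition~\ref{defn:FESstuffle}, the linear shuffle relations coming from Theorem~\ref{thm:FESshuffle} (applied through the map $\bfp$ and the involution $\gt$ as in Definition~\ref{defn:FESdbsf}), the reversal relations of Theorem~\ref{thm:reversal}, and the concatenation relations of Theorem~\ref{thm:Hoff} that eliminate the star-values in favour of the non-star ones. The depth one, two, and three values are pinned down exactly by Theorems~\ref{thm:FESdepth1}, \ref{thm:FMZVdepth2superbity1} and \ref{thm:FMZVdepth3superbity1}, which serve as the base cases of the reduction.

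Next I would regard the $2\cdot3^{w-1}$ symbols of weight $w$ as a basis of a formal $\Q$-vector space $V_w$ and encode each proven relation as a vector in $V_w$, spanning a relation subspace $R$. Because every vector of $R$ represents an identity that genuinely holds in $\calA_1$, the desired equality $\FES_w=\langle\calB_w\rangle$ follows once one checks the purely formal statement $\langle\calB_w\rangle+R=V_w$. This is a single rank computation for each $w\le7$: the matrix whose rows are the standard basis vectors indexing the elements of $\calB_w$, together with all relation vectors, must have full rank $2\cdot3^{w-1}$. Equivalently, Gaussian elimination over $\Q$ with the $\calB_w$ symbols chosen as pivots must rewrite every remaining generator as an explicit $\Q$-combination of $\calB_w$. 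I would run this in Maple for $w=1,\dots,7$, which completes the containment $\FES_w\subseteq\langle\calB_w\rangle$.

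The main obstacle is not the bookkeeping but confirming that the proven relations \emph{suffice} to perform the reduction, i.e.\ that no identity outside the double-shuffle--reversal--concatenation family is needed; in general it is only conjectured that double shuffle relations generate all relations among FESs. For $w\le7$ the rank computation confirms that these relations are already enough, so no conjectural input enters this argument. I would stress that the theorem asserts only the spanning statement $\FES_w=\langle\calB_w\rangle$ and \emph{not} the dimension equality $\dim\FES_w=F_w$: the latter is the content of Conjecture~\ref{conj:FESdim} and would additionally require the linear independence of $\calB_w$, which at present is accessible only numerically and, conjecturally, only under Conjecture~\ref{conj:A_1BernoulliAlgIndpt}.
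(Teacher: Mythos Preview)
Your proposal is correct and matches the paper's approach: both reduce the statement to a finite linear-algebra verification in Maple, using the double shuffle relations together with the reversal relations (the paper explicitly notes, as you anticipated, that double shuffle alone is insufficient but that adding reversal suffices for $w\le 7$). Your write-up is in fact more detailed than the paper's, which simply records the Maple outcome; your careful distinction between the spanning assertion actually proved and the dimension equality of Conjecture~\ref{conj:FESdim} is also on point.
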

\begin{proof}
This is proved with Maple. We found that the double shuffle
relations defined in Definition~\ref{defn:FESdbsf} are
insufficient to produce all the relations. However, with
reversal relations we can find the generating set as given
in the theorem, for all $w\le 7$.
\end{proof}

In the classical setting, we know the double shuffle relations
do not generate all the linear relations among Euler sums. For example, in
proving the identities $\zeta(\{3\}^n)=8^n\zeta(\{\ol2,1\}^n)$ for all $n\ge 1$,
we need to use the distribution relations (see \cite[Remark 3.5]{Zhao2010a}).

\begin{eg}
For FESs, the first missing reversal relation (i.e., not provable by the double shuffle relations),
which appears in weight two already, is given by
$$\zeta_{\calA_1}(1,\bar1)+\zeta_{\calA_1}(\bar1,1)=0.$$
In weight 3, we need two reversal relations
$$\zeta_{\calA_1}(\bar2, 1)=\zeta_{\calA_1}(1, \bar2),  \quad{and}\quad
 \zeta_{\calA_1}(1,1,\bar1)=\zeta_{\calA_1}(\bar1,1,1).$$
\end{eg}

\end{document}